\begin{document}

\title[The spum and sum-diameter of graphs]{The spum and sum-diameter of graphs: labelings of sum graphs}
\author[Rupert Li]{Rupert Li}
\address{Massachusetts Institute of Technology, 77 Massachusetts Avenue, Cambridge, MA 02139, USA}
\email{rupertli@mit.edu}
\keywords{sum graph, graph labeling, spum, sum-diameter, path, cycle, matching, graph union, graph join, hypergraphs}

\date{\today}
\begin{abstract}
    A \emph{sum graph} is a finite simple graph whose vertex set is labeled with distinct positive integers such that two vertices are adjacent if and only if the sum of their labels is itself another label.
    The \emph{spum} of a graph $G$ is the minimum difference between the largest and smallest labels in a sum graph consisting of $G$ and the minimum number of additional isolated vertices necessary so that a sum graph labeling exists.
    We investigate the spum of various families of graphs, namely cycles, paths, and matchings.
    We introduce the \emph{sum-diameter}, a modification of the definition of spum that omits the requirement that the number of additional isolated vertices in the sum graph is minimal, which we believe is a more natural quantity to study.
    We then provide asymptotically tight general bounds on both sides for the sum-diameter, and study its behavior under numerous binary graph operations as well as vertex and edge operations.
    Finally, we generalize the sum-diameter to hypergraphs.
\end{abstract}

\maketitle

\section{Introduction}
In 1990, Harary \cite{harary1990sum} defined a \emph{sum graph} to be a graph whose vertices can be labeled with distinct positive integers such that two vertices are adjacent if and only if the sum of their labels is itself another label in the graph.
Not every graph is a sum graph: the vertex with highest label must be isolated, so any graph without isolated vertices cannot be a sum graph.
However, if one adds enough isolated vertices to a graph it will become possible to represent it as a sum graph, and Harary analyzed the minimum number of isolated vertices one must add to an (unlabeled) graph so that it can be represented as a sum graph, which he called the \emph{sum number} $\sigma(G)$.
The sum number of various special families of graphs have been identified, notably including complete graphs $K_n$, cycles $C_n$, and trees.
See Gallian's survey \cite[Table 20]{gallian2018dynamic} for a comprehensive list of known results on the sum number.
Four years later, Harary \cite{harary1994sum} extended his notion of a sum graph to allow distinct integer labels, rather than simply positive integers; the corresponding graph is called an \emph{integral sum graph}, and the corresponding \emph{integral sum number} is denoted $\zeta(G)$.
For precise definitions of all of these concepts, see \cref{section: preliminaries}.

Goodell, Beveridge, Gallagher, Goodwin, Gyori, and Joseph \cite{goodellsum} introduced the notion of \emph{spum}\footnote{the etymology of spum appears to be a portmanteau of ``the \emph{sp}an number of a s\emph{um} graph."}, which is the minimum possible difference between the maximum and minimum labels for a labeling of the sum graph obtained by adding $\sigma(G)$ isolated vertices to $G$.
Singla, Tiwari, and Tripathi \cite{singla2021some} found $\spum(K_n)=4n-6$ for $n \geq 2$, as well as calculated the spum for other families of graphs, such as $K_{1,n}$ and $K_{n,n}$, and bounding the spum of paths $P_n$ and cycles $C_n$.
They also introduce the natural integral variant of spum, where integral spum concerns itself with integral sum graphs, and adds $\zeta(G)$ isolated vertices to $G$ instead of $\sigma(G)$ vertices.
They then find the integral spum for the same families of graphs.

While investigating spum, we came to the conclusion that a modified concept called the sum-diameter is a more fruitful definition.
The \emph{sum-diameter} of a graph $G$, denoted $\sd(G)$, considers labelings of sum graphs consisting of $G$ along with any number of isolated vertices; while spum restricts consideration to using the minimum possible number $\sigma(G)$ of additional isolated vertices, sum-diameter is defined identically, except without requiring the usage of exactly $\sigma(G)$ additional isolated vertices.
Similarly, the integral variant of this is called the \emph{integral sum-diameter}, denoted $\isd(G)$.
While $\spum(G)$ and $\ispum(G)$ are not necessarily related, lifting the restriction of using exactly $\sigma(G)$ or $\zeta(G)$ additional vertices, respectively, yields that for all graphs, $\isd(G) \leq \sd(G)$.
This is to be expected, as expanding the set of labels from the positive integers $\Z_+$ to simply the integers $\Z$ should allow for a more optimal labeling, i.e., a labeling with a smaller difference between maximum and minimum label.
In other words, the requirement that we first optimize the number of isolated vertices before optimizing the labeling is unnatural and impedes natural properties such as $\isd(G) \leq \sd(G)$.
We find this makes bounding $\isd(G)$ far easier than $\ispum(G)$ in numerous cases.

In this paper, we provide a tight general lower bound linear in the number of vertices $n$, as well as an asymptotically tight general upper bound quadratic in $n$ for the sum-diameter of a graph.
A constructive general upper bound similar to this result would be much harder for spum, as $\sigma(G)$ has not been determined in general.
We then expand upon our analysis of the sum-diameter, computing certain special families of graphs and studying its behavior under various binary graph operations and vertex or edge operations.
For completeness, we also determine the spum for various families of graphs, extending the analysis by \cite{singla2021some} while correcting their numerous errors.

In more detail, in \cref{section: preliminaries}, we formally define the necessary concepts and provide a lemma refining the general lower bound on $\spum(G)$ provided in \cite{singla2021some}.
In \cref{section: spum path}, we improve the existing bounds on the spum of paths $\spum(P_n)$.
Then in \cref{section: spum cycle}, we find the exact value of $\spum(C_n)$ for all cycles $C_n$, notably addressing multiple errors in \cite{singla2021some}.
Next, in \cref{section: ispum cycle}, we improve the existing bounds on the integral spum of cycles $\ispum(C_n)$.
Then, in \cref{section: spum ispum nK2} we exactly determine $\spum(nK_2)$ and $\ispum(nK_2)$ for all perfect matchings of $2n$ vertices $nK_2$.

In \cref{section: sum-diameter}, we introduce the notion of the sum-diameter $\sd(G)$ of a graph $G$, as well as the integral sum-diameter $\isd(G)$.
We discuss basic relationships between $\spum(G)$, $\ispum(G)$, $\sd(G)$, and $\isd(G)$, generalize the linear lower bounds of \cite{singla2021some} to (integral) sum-diameter, and provide a quadratic upper bound on $\sd(G)$.
We then demonstrate that this upper bound is tight up to a constant factor, as the maximum of $\sd(G)$ among all graphs $G$ with $n$ vertices is $\Omega(n^2)$.
In \cref{section: uspum K_n}, we augment the previous analysis of $\spum(K_n)$ and $\ispum(K_n)$ by \cite{singla2021some} to exactly identify the values of $\sd(K_n)$ and $\isd(K_n)$, while simultaneously correcting some errors in the original proof for the value of $\spum(K_n)$ and $\ispum(K_n)$.
In \cref{section: isd cycle path}, we bound the sum-diameter and integral sum-diameter of cycles $C_n$ and paths $P_n$.
In \cref{section: sd binary operations}, we provide upper bounds on the sum-diameter under various binary graph operations, in particular including the disjoint union and graph join.
In \cref{section: sd vertex edge addition deletion}, we provide upper bounds on the sum-diameter under various natural graph transformations, namely vertex addition and deletion, which extends to induced subgraphs, as well as edge addition, deletion, and contraction.
We pose an open question concerning the monotonicity of the sum-diameter with respect to induced subgraphs.

In \cref{section: sd hypergraph} we introduce the generalization of sum-diameter to hypergraphs, and generalize our previous general upper and lower bounds to provide preliminary bounds on both sides for arbitrary $k$-uniform hypergraphs.
Finally, we conclude in \cref{section: conclusion} with some remarks on areas for further research and some open questions.

\section{Preliminaries}\label{section: preliminaries}

Let $S \subset \Z$ and $a \in \Z$.
It will be useful to define the following notation:
\begin{align*}
    S+a &= \{s + a \mid s \in S \} \\
    S-a &= \{s - a \mid s \in S \} \\
    aS &= \{ as \mid s \in S \} \\
    -S &= (-1)S.
\end{align*}
Also, define $\range(S) = \max S - \min S$.
Finally, let $\Z_+$ denote the set of positive integers.

We start with the definition of a sum graph and an integral sum graph, introduced by \cite{harary1990sum} and \cite{harary1994sum}, respectively.

\begin{definition}\label{definition: sum graph}
The \emph{induced sum graph} of a set $L \subset \Z$ is the simple graph whose vertex set is $L$ and $(u,v) \in E$ if and only if $u + v \in L$.

A simple graph $G=(V,E)$ is called a \emph{sum graph} if it is isomorphic to the induced sum graph of some set $L \subset \Z_+$.
We call $L$ a set of labels for the sum graph $G$.
We will often not distinguish between the vertices and their respective labels.

Similarly, a graph $G$ is called an \emph{integral sum graph} if such a bijection exists to a set of integers $L \subset \Z$.
\end{definition}

Conversely, any set of positive integers $L$ induces a sum graph with vertex set $V=L$, and any set of integers $L$ induces an integral sum graph.
Notice that the vertex with maximum label in a sum graph must be isolated, so a graph without isolated vertices cannot be a sum graph.
In particular, a connected graph on at least two vertices cannot be a sum graph.
This leads to the following definition.

\begin{definition}\label{definition: sum number}
The \emph{sum number} of a graph $G$, denoted $\sigma(G)$, is the minimum number of isolated vertices that must be added to $G$ in order to yield a sum graph.

Similarly, the \emph{integral sum number} of a graph $G$, denoted $\zeta(G)$, is the minimum number of isolated vertices that must be added to $G$ in order to yield an integral sum graph.
\end{definition}

It was shown in \cite{harary1990sum} that $\sigma(G) \leq |E|$, and as we clearly have $\zeta(G) \leq \sigma(G)$ for all $G$, this implies both $\sigma(G)$ and $\zeta(G)$ are finite.

The sum number and integral sum numbers of various classes of graphs are known; for a collection of such results, we refer the reader to \cite{gallian2018dynamic}.

We now define the spum of a graph $G$.

\begin{definition}\label{definition: spum}
The \emph{spum} of a graph $G$, denoted $\spum(G)$, is the minimum value of $\range(L)$ among all sets of positive integers that induce the sum graph consisting of $G$ with $\sigma(G)$ isolated vertices.

Similarly, the \emph{integral spum} of $G$, denoted $\ispum(G)$, is the minimum value of $\range(L)$ among all sets of integers $L$ that induce the integral sum graph consisting of $G$ with $\zeta(G)$ isolated vertices.
\end{definition}
According to \cite{gallian2018dynamic}, the notion of spum was first introduced in an unpublished paper \cite{goodellsum}, which supposedly also proved that $\spum(K_n)=4n-6$.
This result has been confirmed in \cite{singla2021some}, which also calculated the spum and integral spum for several other classes of graphs, namely including $K_{1,n}$ and $K_{n,n}$, along with bounds for path graphs $P_n$ and cycles $C_n$.

\begin{remark}\label{remark: a1 = min L spum}
For any graph $G$ without isolated vertices and a corresponding sum graph labeling $L$ that achieves $\range(L)=\spum(G)$, let $S\subset L$ be the set of labels assigned to the vertices of $G$.
Notice that $\min S = \min L$, as otherwise removing the label $\min L$ does not change the fact that this labeling induces $G$, while strictly decreasing its range, contradicting the minimality of $\range(L)=\spum(G)$.
\end{remark}

In \cite[Theorem 2.1]{singla2021some}, it was shown that for all connected graphs $G$ with $n$ vertices and maximum and minimum vertex degrees $\Delta$ and $\delta$, respectively, $\spum(G)\geq 2n-(\Delta-\delta)-2$.
The result can be extended to all graphs $G$ without isolated vertices without changing the proof.
We provide the following refinement for the equality case.
\begin{lemma}\label{lemma: spum lower bound equality a1-2a1}
Let $G$ be a graph without isolated vertices, whose order is $n$ and whose maximum and minimum vertex degrees are $\Delta$ and $\delta$, respectively.
If $\spum(G) = 2n-(\Delta-\delta)-2$, then let $L$ be any labeling that achieves this spum value.
Let $S\subset L$ be the set of labels assigned to the vertices of $G$ and let $a_1=\min S$.
Then $[a_1,2a_1] \subseteq S$.
\end{lemma}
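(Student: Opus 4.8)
The plan is to analyze the equality case by revisiting the inequality $\spum(G)\ge 2n-(\Delta-\delta)-2$ and extracting what tightness forces. Fix a labeling $L$ achieving $\range(L)=\spum(G)=2n-(\Delta-\delta)-2$, write $S=\{a_1<a_2<\dots<a_n\}$ for the labels of $G$, and set $M=\max L$; by \cref{remark: a1 = min L spum} we have $a_1=\min L$. The first ingredient I would isolate is that \emph{every isolated label of $L$ is a witness}, i.e.\ equals $u+v$ for some edge $uv$ of $G$. Indeed, if an isolated-vertex label $c$ were not of this form, then no pair of labels would sum to $c$ (any pair summing to $c\in L$ is an edge, hence both endpoints lie in $S$), so deleting $c$ would induce $G$ together with only $\sigma(G)-1$ isolated vertices, contradicting the minimality of $\sigma(G)$. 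Since any witness is $u+v$ with distinct $u,v\ge a_1$, we get $c\ge a_1+(a_1+1)>2a_1$. Hence no isolated label lies in $[a_1,2a_1]$, so $[a_1,2a_1]\cap L\subseteq S$, and the problem reduces to showing that \emph{every} integer of $[a_1,2a_1]$ is in fact a label.

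Next I would pin down the coarse location of the labels and then charge the gaps. Writing $\range(L)=(a_n-a_1)+(M-a_n)$, the $n$ distinct labels of $S$ give $a_n-a_1\ge n-1$, while the largest neighbor $c^{*}$ of the top vertex $a_n$ satisfies $a_n+c^{*}\in L$, so $M-a_n\ge c^{*}\ge a_1+\delta-1$; together these give the elementary bound $\range(L)\ge n+a_1+\delta-2$. Combined with the hypothesis this yields $a_1\le n-\Delta\le n-1$, which forces $a_n\ge 2a_1$ (otherwise $S\subseteq[a_1,2a_1)$ would give $a_1\ge n$, a contradiction). Thus $[a_1,2a_1]\subseteq[a_1,a_n]$, so if $g$ denotes the number of integers of $[a_1,2a_1]$ that are not labels, then counting labels and non-labels in $[a_1,a_n]$ gives the refinement $a_n-a_1\ge (n-1)+g$. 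Feeding this back in, together with the sharper estimate $M-a_n\ge n-\Delta+\delta-1$ that comes from re-running the proof of the cited bound, produces $\range(L)\ge 2n-(\Delta-\delta)-2+g$. Equality then forces $g=0$: every integer of $[a_1,2a_1]$ is a label, hence lies in $S$ by the first step, which is exactly $[a_1,2a_1]\subseteq S$.

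The delicate ingredient, and where I expect the real work to lie, is the estimate $M-a_n\ge n-\Delta+\delta-1$ on the spread of labels above the top vertex; this is where the dependence on $\Delta$ genuinely enters, and the elementary decomposition only gives $M-a_n\ge a_1+\delta-1$, which (since equality already forces $a_1\le n-\Delta$) points the wrong way. Establishing it amounts to showing $c^{*}\ge n-\Delta+\delta-1$, equivalently $a_1\ge n-\Delta$ (so that $a_1=n-\Delta$ at equality), i.e.\ that the high-degree vertex cannot compress the minimum label below this threshold—precisely the content of the cited lower bound. Once this spread estimate and its equality conditions are in place, the gap-charging above goes through verbatim, and its combination with the structural fact that \emph{no isolated label lies below $2a_1$} delivers $[a_1,2a_1]\subseteq S$.
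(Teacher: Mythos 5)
Your opening reduction is sound, and it is a genuinely different observation from anything in the paper's proof: because a spum-optimal $L$ uses exactly $\sigma(G)$ isolated vertices, every isolated label must equal $u+v$ for some edge $uv$ of $G$ (otherwise deleting it would exhibit a sum graph with $\sigma(G)-1$ isolated vertices), hence every isolated label exceeds $2a_1$ and $[a_1,2a_1]\cap L\subseteq S$. Your coarse bounds $a_1\le n-\Delta$, $a_n\ge 2a_1$, and $a_n-a_1\ge (n-1)+g$ are also correct. But the proof breaks exactly where you flag the ``real work'': the estimate $\max L-a_n\ge n-\Delta+\delta-1$ is false for equality-case labelings, and your proposed route to it cannot be repaired. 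Under the equality hypothesis that estimate is equivalent to $a_n-a_1\le n-1$, i.e.\ to $S$ being $n$ consecutive integers. Now take the paper's own optimal labeling of $P_5$ from \cref{tab:spum P_n}, $L=\{1,2,4,5,6,8\}$: here $\spum(P_5)=7=2n-(\Delta-\delta)-2$, but $S=\{1,2,4,5,6\}$ is not consecutive, $\max L-a_n=2<3=n-\Delta+\delta-1$, and $a_1=1<3=n-\Delta$. So your claims that equality forces $a_1=n-\Delta$, and that $a_1\ge n-\Delta$ is ``precisely the content of the cited lower bound,'' are both wrong: the cited bound controls $\range(L)$, not $a_1$, and in equality cases the slack can sit entirely inside $[a_1,a_n]$ (here $a_n-a_1=5>n-1$ while $g=0$). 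The labeling $\{1,2,3,4,6\}$ of $P_4$ gives another counterexample to $a_1=n-\Delta$. One could retreat to proving your two estimates only under the contradiction hypothesis $g\ge 1$, where they are vacuously true, but then they carry the entire content of the lemma and you have given no argument for them.

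The idea you are missing is the paper's charging mechanism, which makes $\Delta$ enter through the degree of $a_1$ rather than through the spread of labels above $a_n$. Assume $[a_1,2a_1]\not\subseteq S$, so $|S\cap[a_1,2a_1]|\le a_1$, and set $S_2=S\setminus[a_1,2a_1]$ and $S_3=S_2-a_1\subseteq[a_1+1,a_n-a_1]$. Every element of $S_3\cap S$ is adjacent to $a_1$ (its sum with $a_1$ lies in $S_2\subseteq L$), so $|S_3\cap S|\le\Delta$; hence at least $|S_3|-\Delta\ge n-a_1-\Delta$ elements of $S_3$ lie in $[a_1,a_n]\setminus S$, whose size is $a_n-a_1+1-n$. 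This forces $a_n\ge 2n-\Delta-1$, far stronger than $a_n\ge a_1+n-1+g$, and then $\max L\ge a_n+a_\delta$ (or $\max L\ge a_n+a_{\delta+1}$ in the saturated case $|S_3\cap S|=\Delta$, where $a_n$ cannot neighbor $a_1$) yields $\range(L)\ge 2n-(\Delta-\delta)-1$, the desired contradiction. Your gap count $g$ only records missing integers inside $[a_1,2a_1]$; the paper's count records the much larger set of non-vertex integers that the degree bound at $a_1$ forces among the shifted labels $S_2-a_1$, and that is what makes the argument close.
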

\begin{proof}
The proof uses the same structure as the proof of \cite[Theorem 2.1]{singla2021some}, with some additional arguments.

We use the same notation as in the statement of the lemma.
Assume for the sake of contradiction that $[a_1,2a_1] \not\subseteq S$.
Sort $S=\{a_1,\dots,a_n\}$ in increasing order $a_1 < \cdots < a_n$.
Define $S_1 = S \cap [a_1,2a_1]$, $S_2 = S \setminus S_1$, $S_3 = S_2 - a_1$, and $T=[a_1,a_n]\setminus S$.
As $[a_1,2a_1] \not\subseteq S$, we have $|S_1| \leq a_1$.
Notice that every element of $S_3 \cap S$ is adjacent to $a_1$: for each $x \in S_3$, we have $a_1 + x \in S_2 \subseteq S$ and thus if $x \in S$ then $x$ is adjacent to $a_1$.
Hence, $|S\cap S_3| \leq \Delta$.
As $S_3 \subseteq [a_1 + 1, a_n - a_1]$, we find
\[ |S_3 \cap S| + |S_3 \cap T| = |S_3 \cap [a_1,a_n]| = |S_3| = |S_2| = |S| - |S_1| \geq n-a_1.\]

If $|S\cap S_3| = \Delta$, then $|S_3 \cap T| \geq n-\Delta - a_1$, and $a_1$ is already adjacent to $\Delta$ labels in $[a_1+1,a_n-a_1]$, so it cannot be adjacent to $a_n$.
We have
\[ (a_n - a_1 + 1) - n = |T| \geq |S_3 \cap T| \geq n - \Delta - a_1,\]
which implies $a_n \geq 2n - \Delta - 1$.
As $a_n$ is adjacent to at least $\delta\geq 1$ vertices and is not adjacent to $a_1$, it is adjacent to some vertex with label at least $a_{\delta+1}$, so we have
\[ \max L \geq a_n + a_{\delta + 1} \geq 2n - \Delta - 1 + a_1 + \delta, \]
so $\range(L) \geq 2n - (\Delta - \delta) - 1$, contradicting the assumption that $\range(L)=2n-(\Delta-\delta)-2$.

Otherwise, we have $|S\cap S_3| \leq \Delta - 1$, so $|S_3 \cap T| \geq n - \Delta - a_1 + 1$, and thus $a_n \geq 2n - \Delta$ following the same argument as before.
As $a_n$ is adjacent to at least $\delta \geq 1$ vertices, we have
\[ \range(L) \geq a_n + a_\delta - a_1 \geq 2n-\Delta + (\delta -1 ) = 2n - (\Delta-\delta) - 1,\]
again contradicting the assumption that $\range(L)=2n-(\Delta-\delta)-2$.

Hence, $[a_1,2a_1] \subseteq S$.
\end{proof}

\section{Spum of paths}\label{section: spum path}
In this section we improve the bounds on $\spum(P_n)$ given in \cite{singla2021some}, where $P_n$ denotes the path graph with $n$ vertices.
In particular, we prove the following result.
\begin{theorem}\label{theorem: spum path best bounds}
For $3 \leq n \leq 6$, we have $\spum(P_n) = 2n-3$, and for $n \geq 7$, we have
\begin{equation*}
    2n-2 \leq \spum(P_n) \leq \begin{cases} 2n+1 & \text{if } n \text{ is odd} \\ 2n-1 & \text{if } n \text{ is even}. \end{cases}
\end{equation*}
\end{theorem}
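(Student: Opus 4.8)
The plan is to establish the lower bound and upper bound separately, treating the small cases $3 \leq n \leq 6$ by hand. For a path $P_n$, the minimum degree is $\delta = 1$ (the endpoints) and the maximum degree is $\Delta = 2$ (all interior vertices, assuming $n \geq 3$). Plugging into the general lower bound $\spum(G) \geq 2n - (\Delta - \delta) - 2$ from \cite[Theorem 2.1]{singla2021some} gives $\spum(P_n) \geq 2n - 1 - 2 = 2n - 3$. This matches the claimed exact value for the small cases, so for $3 \leq n \leq 6$ I would exhibit explicit label sets $L$ of range $2n-3$ realizing $P_n$ (plus the appropriate number of isolated vertices), and verify the induced sum graph is exactly $P_n$; the lower bound then pins the value. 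I expect these small constructions to be short explicit lists of integers that one checks directly.

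For $n \geq 7$, I would first sharpen the lower bound from $2n-3$ to $2n-2$. This is where \cref{lemma: spum lower bound equality a1-2a1} becomes the key tool: if $\spum(P_n) = 2n-3 = 2n - (\Delta-\delta) - 2$ were achievable, then any optimal labeling $L$ with vertex-label set $S$ and $a_1 = \min S$ must satisfy $[a_1, 2a_1] \subseteq S$. The strategy is to derive a contradiction from this containment for large $n$: the interval $[a_1, 2a_1]$ contributes $a_1 + 1$ consecutive labels to $S$, and within a path (a graph of maximum degree $2$), having a long run of consecutive labels forces many adjacencies, since consecutive integers $x, x+1, \dots$ generate many sums that must either be labels (creating edges) or not. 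I would argue that a block of consecutive labels of this size cannot embed into $P_n$ without exceeding the degree constraint $\Delta = 2$ or without forcing the range above $2n-3$, yielding the contradiction and hence $\spum(P_n) \geq 2n-2$.

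For the upper bound I would give explicit parametric constructions depending on the parity of $n$. The natural approach is to build label sets resembling an arithmetic-progression-like structure so that consecutive path vertices have labels whose pairwise sums land inside the label set in a controlled, path-like pattern, while all other sums fall outside $[\min L, \max L]$ or coincide only with the designated isolated-vertex labels. I would present one family achieving range $2n+1$ for odd $n$ and another achieving $2n-1$ for even $n$, then verify in each case that (i) the induced sum graph restricted to the chosen vertices is precisely the path $P_n$, (ii) the remaining labels are isolated, and (iii) the number of isolated vertices used equals $\sigma(P_n)$ so that the construction legitimately bounds $\spum$ rather than merely $\sd$.

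\textbf{The main obstacle} I anticipate is verifying that these explicit labelings use exactly $\sigma(P_n)$ isolated vertices, not more: since spum (unlike sum-diameter) rigidly requires the minimal number of added isolated vertices, a construction with small range but a suboptimal count of isolated vertices does not bound $\spum(P_n)$. I would therefore need the known value of $\sigma(P_n)$ (which is $1$ for paths) and must confirm my constructions add precisely that many isolated vertices, which constrains the design and is the delicate part of making the upper bound valid. Checking that no spurious edges appear — i.e., that no unintended pair of labels sums to a third label — is routine but must be done carefully for each parity class.
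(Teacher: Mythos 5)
Your overall architecture matches the paper's: the general degree bound gives $\spum(P_n) \geq 2n-3$, small cases $3 \leq n \leq 6$ are settled by explicit labelings meeting that bound, the even-$n$ upper bound $2n-1$ comes from an explicit construction (the paper uses $L=\{1,3,5,\dots,2n-3\}\cup\{2n-4,2n\\}$, with $2n$ the unique isolated vertex, so the $\sigma(P_n)=1$ concern you flag is handled), and the improved lower bound $2n-2$ for $n \geq 7$ is proved by contradiction starting from \cref{lemma: spum lower bound equality a1-2a1}. So you have identified the right key lemma and the right skeleton.

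However, there is a genuine gap in your plan for the lower bound, and it is the heart of the theorem. Your proposed contradiction --- that the block $[a_1,2a_1] \subseteq S$ ``forces many adjacencies'' violating $\Delta = 2$ --- only works when $a_1$ is \emph{large}: consecutive labels do not by themselves create edges (adjacency needs sums to land in $L$), and the actual degree obstruction is that each element of $S \cap [2a_1+1,3a_1]$ forces a neighbor of $a_1$, which yields only an \emph{upper} bound $a_1 \leq \frac{2n-5}{3}$. When $a_1$ is small, $[a_1,2a_1]$ is a short block and no degree violation occurs, so your argument cannot close. The paper needs a complementary \emph{lower} bound $a_1 \geq n-7$ (imported from Claim 2 of \cite{singla2021some}, a counting argument about missing labels in the range, not a degree argument), plus separate hand elimination of the cases $a_1 \in \{n-2, n-3\}$. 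Even then, the two bounds $n-7 \leq a_1 \leq \frac{2n-5}{3}$ are contradictory only for $n \geq 17$; the cases $7 \leq n \leq 16$ cannot be dispatched by any such asymptotic argument and are settled in the paper by exhaustive computer search. Your proposal accounts for neither the second-side bound on $a_1$ nor the residual finite verification, so as written the lower bound for $n \geq 7$ would not go through.
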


As $P_2 = K_2$ and the spum and integral spum of complete graphs are completely determined, we only consider $n \geq 3$.
It was shown in \cite{harary1990sum} that $\sigma(P_n)=1$, and in \cite{harary1994sum} that $\zeta(P_n)=0$, for all such $n$.

The previously known best bounds for $\spum(P_n)$ are
\begin{align}\label{eq: spum bound P_n}
    2n-3 \leq \spum(P_n) \leq \begin{cases} 2n+1 & \text{if } n \text{ is odd} \\ 2n+2 & \text{if } n \text{ is even}, \end{cases}
\end{align}
due to \cite{singla2021some}, where the upper bounds hold for $n \geq 9$ and the lower bound holds for $n \geq 3$.

We improve the upper bound in the following result, lowering the gap between upper and lower bounds from 5 to 2 in the even case.
\begin{theorem}\label{theorem: spum path upper bound}
For $n \geq 3$, we have $\spum(P_n) \leq \begin{cases} 2n+1 & \text{if } n \text{ is odd} \\ 2n-1 & \text{if } n \text{ is even}. \end{cases}$
\end{theorem}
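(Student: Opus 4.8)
Since $\sigma(P_n)=1$, proving the bound amounts to exhibiting, for each $n\ge 3$, a set $L\subset\Z_+$ of exactly $n+1$ positive integers whose induced sum graph is $P_n$ together with a single isolated vertex, and whose range $\range(L)$ is at most the claimed quantity. So the plan is entirely constructive: I would write down explicit label sets (most cleanly described by listing the labels of the path in path order, followed by the label of the lone isolated vertex), and then verify that they realize $P_n \sqcup K_1$ with the required range.

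The governing idea I would use is to keep all $n+1$ labels inside an interval of length about $2n$, and to arrange the path labels so that the sum of each pair of consecutive vertices is again a label (producing exactly the $n-1$ path edges), while the single largest label serves as the isolated vertex and absorbs the one ``overflow'' sum at the end of the path. Concretely, I would define the labeling piecewise according to the parity of the vertex index, so that consecutive labels sum into the prescribed target set; the even case admits a slightly tighter packing, which is the source of the better bound $2n-1$ there. I expect to treat the even case $n$ directly, and then obtain the odd case by adjoining one pendant vertex to an even path on $n-1$ vertices (or by a parallel construction), checking that this raises the range by at most $2$ and thereby accounts for the gap between the odd bound $2n+1$ and the even bound $2n-1$.

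The verification then splits into four routine-but-delicate checks: (i) every consecutive pair of path labels sums to a label, so all intended edges are present; (ii) the top label is isolated and is the unique added vertex, confirming that we use only the $\sigma(P_n)=1$ extra vertex; (iii) $\range(L)$ equals the claimed value; and (iv)---the crux---no non-consecutive pair of labels has its sum in $L$, so that no spurious edges appear. Step (iv) is where essentially all the difficulty lies: because the labels are packed into an interval of length only $\approx 2n$, there are $\Theta(n^2)$ pairs whose sums must be kept out of an $n$-element set, so one cannot simply rely on sparsity. The natural tool is a parity and interval/size analysis: by controlling the parities of the labels and the range in which each type of pairwise sum can fall, I would show that a sum of two labels lies in $L$ only when the two labels are consecutive along the path. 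Managing this parity bookkeeping cleanly, in a way uniform across both parities of $n$ and valid down to the small cases $n\ge 3$ (rather than only $n\ge 9$ as in the prior bound), is the main obstacle; the remaining steps are direct computations.
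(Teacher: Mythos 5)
Your even-case plan is, in spirit, exactly the paper's: pack the path labels by parity (the paper uses $\{1,3,5,\dots,2n-3\}\cup\{2n-4,2n\}$), let the top label be the unique isolated vertex, and rule out spurious edges by a parity-and-size check on pairwise sums. But a proof of this kind \emph{is} its construction: without the explicit label set, your item (iv) — the check that no non-consecutive pair sums into $L$ — cannot even be begun, so the even case remains a statement of intent. That, however, is only a completeness gap; the decisive flaw is in your odd case.

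The reduction ``adjoin a pendant vertex to an even path on $n-1$ vertices, raising the range by at most $2$'' cannot work, for two independent reasons. First, the arithmetic refutes itself: the even construction on $n-1$ vertices has range $2(n-1)-1=2n-3$, so an increase of at most $2$ would give $\spum(P_n)\leq 2n-1$ for odd $n$; but the paper's exhaustive computations (\cref{tab:spum P_n}) show $\spum(P_9)=19=2\cdot 9+1$, $\spum(P_{11})=23$, $\spum(P_{13})=27$, $\spum(P_{15})=31$, so for these $n$ no labeling of range $2n-1$ exists — your step is impossible, not merely unproven. Second, even allowing an increase of $4$ (the true gap from $2n-3$ to $2n+1$), the pendant idea collides with the definition of spum. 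Since $\sigma(P_n)=1$, the enlarged labeling must still have exactly one isolated vertex, so the new edge sum $x+e$ cannot be a freshly added label (that label would be a second isolated vertex); it must land in the old set $\{1,3,\dots,2n-5\}\cup\{2n-6,2n-2\}$, whose path endpoints are $e\in\{2n-6,2n-5\}$. A parity check of the four cases forces $x\in\{1,3\}$, which are already labels, or $x=4$, which creates the spurious edge between $4$ and $1$ because $4+1=5$ is a label; and placing $x$ above the old maximum makes $x+e$ exceed every label in the permitted window, so the pendant edge cannot exist at all. The paper never reduces odd to even: it invokes a separate construction from prior work for odd $n\geq 9$ and explicit labelings for $n\in\{3,5,7\}$, and your parenthetical ``or by a parallel construction'' is precisely that missing content.
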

\begin{proof}
For even $n \geq 4$, let $L=\{1,3,5,\dots,2n-3\}\cup\{2n-4,2n\}$.
We claim that $L$ induces $P_n$ with an additional isolated vertex as its sum graph, and as $\range(L) = 2n-1$, this proves the upper bound.
We claim the path formed is given by the sequence
\[ 2n-4, 1, 2n-5, 5, 2n-9, 9, 2n-13, \dots, 2n-7, 3, 2n-3,\]
where after $2n-4$ the sequence alternates between the integers equivalent to 1 modulo 4 from 1 to $2n-3$, inclusive, and the integers equivalent to 3 modulo 4 from $2n-5$ to 3, inclusive.

It suffices to show that these edges are the only edges in the induced sum graph.
If an edge is between two odd vertices, then they must sum to either $2n-4$ or $2n$.
The edges whose vertices sum to $2n-4$ are $(1,2n-5),(3,2n-7),\dots,(n-3,n-1)$, and the edges whose vertices sum to $2n$ are $(3,2n-3),(5,2n-5),\dots,(n-1,n+1)$.

No edge can be between two even vertices, as $2n=\max L$ must be isolated.
For an edge to be between an even and an odd vertex, the even vertex must be $2n-4$, and the only such edge is $(1,2n-4)$.
Hence, we find $L$ induces $P_n$.

For odd $n \geq 9$, the upper bound was shown in \cite[Theorem 7.1]{singla2021some}.
For $n\in\{3,5,7\}$, refer to the constructions in \cref{tab:spum P_n}, all of which show that $\spum(P_n) \leq 2n+1$ in this case.
\end{proof}

\begin{table}[htbp]
    \centering
    \setlength{\tabcolsep}{18pt}
    \begin{tabular}{lll}
        \hline
        $n$ & Lexicographically first optimal labeling & $\spum(P_n)$ \\
        \hline
        3 & \{1, 2, 3, 4\} & 3 \\
        4 & \{1, 2, 3, 4, 6\} & 5 \\
        5 & \{1, 2, 4, 5, 6, 8\} & 7 \\
        6 & \{1, 2, 4, 5, 7, 9, 10\} & 9 \\
        7 & \{1, 2, 4, 6, 7, 9, 12, 13\} & 12 \\
        8 & \{1, 2, 4, 6, 7, 9, 12, 15, 16\} & 15 \\
        9 & \{1, 2, 4, 5, 8, 12, 15, 17, 18, 20\} & 19 \\
        10 & \{1, 3, 5, 7, 9, 11, 13, 15, 16, 17, 20\} & 19 \\
        11 & \{1, 3, 5, 7, 9, 11, 13, 15, 16, 17, 19, 24\} & 23 \\
        12 & \{1, 3, 5, 7, 9, 11, 13, 15, 17, 19, 20, 21, 24\} & 23 \\
        13 & \{1, 3, 5, 7, 9, 11, 13, 15, 17, 19, 20, 21, 25, 28\} & 27 \\
        14 & \{1, 3, 5, 7, 9, 11, 13, 15, 17, 19, 21, 23, 24, 25, 28\} & 27 \\ 
        15 & \{1, 3, 5, 7, 9, 11, 13, 15, 17, 19, 21, 23, 24, 25, 27, 32\} & 31 \\
        \hline
    \end{tabular}
    \caption{Initial values of $\spum(P_n)$.}
    \label{tab:spum P_n}
\end{table}

\begin{remark}\label{remark: spum path upper bound tightness}
The upper bound for even $n$ is not sharp for $n\in\{4,6\}$, as $L=\{1,2,3,4,6\}$ induces $P_4$ with $\range(L)=5<2n-1=7$, and $L=\{1,2,4,5,7,9,10\}$ induces $P_6$ with $\range(L)=9<2n-1=11$, but is sharp for even $n$ between 8 and 14, inclusive.
Similarly, the upper bound for odd $n$ is not sharp for $n\in\{3,5,7\}$, but is sharp for odd $n$ between 9 and 15, inclusive.

Recall that $\sigma(P_n) = 1$ for $n \geq 3$.
Let $x = 2n-3$.
If $L$ is a labeling that induces $P_n$ with range $x$.
Sorting the $n$ vertices of our path graph $a_1 < \cdots < a_n$, we find $a_n$ must be adjacent to some vertex $a_i$ for $i < n$, meaning $a_n + a_i \in L$, so $a_1 + x = \max L \geq a_n + a_i \geq a_n + a_1 \geq 2a_1 + n - 1$, which implies $1 \leq \min L = a_1 \leq x - n + 1$.
Hence, $\spum(P_n) = x$ if and only if there exists a set $L \subset [1, 2x-n+1]$ with range $x$ that induces $P_n$.
Otherwise, we increase $x$ by one and repeat.
Using an exhaustive computer search, we can use this to compute $\spum(P_n)$ for $3 \leq n \leq 15$.
The results are in \cref{tab:spum P_n}.
A similar table was provided in \cite{singla2021some}, though they erroneously list $\spum(P_n)=2n+2$ for $n=10,12,14$, where our construction demonstrates $\spum(P_n) \leq 2n-1$ for these values; we also provide more values for $n$ in our table.
\end{remark}

Our construction of a $2n-1$ bound for $\spum(P_n)$ when $n$ is even falsifies a previous conjecture on $\spum(P_n)$ by \cite[Conjecture 7.1]{singla2021some}, but we provide the following updated conjecture.

\begin{conjecture}\label{conjecture: spum P_n}
For $n \geq 8$, we have $\spum(P_n) = \begin{cases} 2n+1 & \text{if } n \text{ is odd} \\ 2n-1 & \text{if } n \text{ is even}. \end{cases}$
\end{conjecture}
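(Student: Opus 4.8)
The upper bounds $\spum(P_n)\le 2n+1$ (odd) and $\spum(P_n)\le 2n-1$ (even) are already supplied by \cref{theorem: spum path upper bound}, and \cref{theorem: spum path best bounds} gives the lower bound $\spum(P_n)\ge 2n-2$ for all $n\ge 7$. The plan is therefore to prove the conjecture entirely by strengthening the lower bound: for even $n$ one must rule out the single value $\spum(P_n)=2n-2$, while for odd $n$ one must rule out all three of $2n-2$, $2n-1$, and $2n$. The stark asymmetry between these two tasks (one forbidden value versus three) is the signal that a parity obstruction, rather than a purely metric one, is doing the work, and this is what I would try to isolate.

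First I would fix the rigid features common to every near-optimal labeling. Since $\sigma(P_n)=1$, any labeling $L$ realizing $P_n$ has $|L|=n+1$, its maximum element is the unique isolated vertex, and the remaining labels $a_1<\cdots<a_n$ are the path vertices with $a_1=\min L$ by \cref{remark: a1 = min L spum}. The second-largest label $a_n$ is forced to be a path endpoint: its only possible neighbor is $a_p:=\max L-a_n$, since any label exceeding $a_n$ must equal $\max L$. In particular $\max L=a_n+a_p\ge a_n+a_1$, so $\range(L)\ge a_n\ge a_1+n-1$. Propagating this reasoning downward, each high label $a_{n-i}$ can only be adjacent to labels of the form $a_k-a_{n-i}$ with $a_k\in\{a_n,\max L\}$, which pins the top of $L$ into a rigid difference pattern. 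Dually, one can push the interval-containment mechanism of \cref{lemma: spum lower bound equality a1-2a1} (whose equality case $2n-3$ is already excluded for $n\ge 7$, as $\Delta-\delta=1$ for a path) to larger target ranges, forcing $[a_1,2a_1]\subseteq S$ together with further initial structure. The goal of combining these top-down and bottom-up rigidities is to reduce an optimal labeling to essentially the families appearing in \cref{theorem: spum path upper bound}.

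The crux is then a parity count that explains the $+2$ discrepancy. I would classify the $n-1$ path edges by the parities of their endpoints: odd--odd and even--even edges have even sums and odd--even edges have odd sums, and in each case that sum is a prescribed label. Because the $n+1$ labels lie in an interval of $\range(L)+1$ consecutive integers, the number of available even and odd labels is determined up to one by $\range(L)$ and the parity of $a_1$. The optimal even-$n$ construction is essentially forced: an odd ``backbone'' whose consecutive edges alternate between the two even sums $2n-4$ and $2n$, which amounts to a matching of the odd labels according to their residue modulo $4$. I would make this rigidity quantitative, showing that such a residue matching covering all path vertices fits within range $2n-1$ exactly when $n$ is even, and requires two extra units of range when $n$ is odd, thereby ruling out $2n-2$ in the even case and $2n-2,2n-1,2n$ in the odd case simultaneously.

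\textbf{The main obstacle} is controlling the middle of the label set. The top-down endpoint argument and the bottom-up containment lemma rigidly constrain only the extreme labels, and the parity count constrains only the cardinalities of the parity (or residue) classes, but neither controls how the bulk of the labels may be interleaved; an optimal labeling could in principle deviate from the backbone construction in its interior. Converting the heuristic ``the construction is forced'' into a genuine exclusion of every labeling of range $2n-2$ (and, for odd $n$, also of $2n-1$ and $2n$) appears to require a delicate case analysis over the placement of the few even labels and the pairing of the odd ones, and ruling out three consecutive range values at once in the odd case is precisely the step I cannot presently close. This is why the statement is recorded as a conjecture, with the exhaustive computation of \cref{tab:spum P_n} providing its supporting evidence for $n\le 15$.
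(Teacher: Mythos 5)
The statement you are attacking is posed in the paper as a conjecture, not a theorem: the paper offers no proof of it, only the exhaustive computations of \cref{tab:spum P_n}, which verify it for $8 \leq n \leq 15$. So there is no paper proof to compare against, and your proposal must be judged on whether it closes the problem --- which, as you yourself acknowledge, it does not. Your assessment of the state of play is accurate: the upper bounds come from \cref{theorem: spum path upper bound}, the lower bound $2n-2$ for $n \geq 7$ from \cref{theorem: spum path lower bound}, and what remains is to exclude one value ($2n-2$) in the even case and three values ($2n-2$, $2n-1$, $2n$) in the odd case. Your structural observations are also sound: $a_n$ has a unique possible neighbor $\max L - a_n$ and hence must be a path endpoint, and the equality case of \cref{lemma: spum lower bound equality a1-2a1} is unavailable here since the value $2n-3$ is already excluded for $n \geq 7$.

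The genuine gap is the one you name, and it is worth being precise about why it is hard. \cref{lemma: spum lower bound equality a1-2a1} is proved only for labelings achieving exactly $2n-(\Delta-\delta)-2$; extending its mechanism to target ranges $2n-2$ through $2n$ is not a routine modification, because the slack of one to three units breaks the counting arguments in its proof. Your parity/residue heuristic --- that an odd backbone matched by residues modulo $4$ is essentially forced, and costs two extra units of range when $n$ is odd --- constrains only the cardinalities of residue classes, not the interleaving of labels in the bulk of $L$, so it cannot by itself exclude every labeling of a forbidden range. A cautionary data point from the paper itself: for $3 \leq n \leq 6$ one has $\spum(P_n) = 2n-3$, and the optimal labelings in \cref{tab:spum P_n} for small $n$ do not resemble the backbone constructions at all, so any proof must genuinely rule out all labelings of the forbidden ranges rather than argue that optimal labelings must look like the known constructions. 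Your proposal is a reasonable research plan, but it contains the same unclosed hole that keeps the statement a conjecture in the paper.
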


\cref{tab:spum P_n} verifies this conjecture for $8 \leq n \leq 15$.

For $3 \leq n \leq 6$, we observe $\spum(P_n)=2n-3$ achieves its lower bound from \cref{eq: spum bound P_n} following \cref{theorem: spum path best bounds}, but for $n \geq 7$, this lower bound inequality appears to be strict; \cref{tab:spum P_n} shows this inequality is strict for $7 \leq n \leq 15$.
The following result shows that this is indeed true for all $n \geq 7$, as the lower bound can be improved.

\begin{theorem}\label{theorem: spum path lower bound}
For $n \geq 7$, we have $\spum(P_n) \geq 2n-2$.
\end{theorem}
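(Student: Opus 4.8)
The plan is to argue by contradiction against the previously known lower bound. Since $P_n$ (for $n \ge 3$) has maximum degree $\Delta = 2$ and minimum degree $\delta = 1$, the bound $\spum(G) \ge 2n - (\Delta - \delta) - 2$ specializes to $\spum(P_n) \ge 2n - 3$; I would suppose for contradiction that equality holds, fix a labeling $L$ realizing $\range(L) = 2n - 3$, and set $a_1 = \min L$, which by \cref{remark: a1 = min L spum} equals $\min S$, where $S \subset L$ is the set of path-vertex labels. \cref{lemma: spum lower bound equality a1-2a1} then supplies the crucial structural input $[a_1, 2a_1] \subseteq S$: a full block of consecutive integers must be path vertices. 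Recording the bookkeeping, since $\sigma(P_n) = 1$ we have $|L| = n + 1$; the global maximum $M$ is the unique isolated vertex, so $S = L \setminus \{M\}$ and $M = a_1 + 2n - 3$. Hence $L$ occupies $n + 1$ of the $2n - 2$ integers of $[a_1, M]$, leaving exactly $n - 3$ missing values (``gaps''), all of which lie in $(2a_1, M)$ because $[a_1, 2a_1]$ is gap-free and $M \in L$.

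Next I would extract the edge constraints forced by small labels, using that every vertex of $S$ has degree at most $2$. Two facts drive the argument: first, the largest label $a_n = \max S$ is necessarily a leaf, since $(a_n, M] \cap L = \{M\}$ forces its only possible neighbor to be $M - a_n$; second, because $[a_1, 2a_1] \subseteq L$, every label $u \in (2a_1, 3a_1]$ yields a neighbor $u - a_1 \in (a_1, 2a_1]$ of the vertex $a_1$, so $|L \cap (2a_1, 3a_1]| \le \deg(a_1) \le 2$; equivalently, at most $\deg(a_1) + 1 \le 3$ pairs of labels differ by exactly $a_1$. The main step is then to convert the degree-$2$ restriction on the consecutive block into a bound on how clustered $L$ can be, concretely a lower bound on the number of maximal runs of consecutive labels (equivalently an upper bound on the number of adjacent-label pairs, which $\deg(a_1)$ controls when $a_1$ is small), and to compare this against the fixed gap budget $n - 3$. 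The two counts become incompatible once $n$ is large, which is the source of the contradiction.

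The hard part is the small-$a_1$ regime, and in particular $a_1 = 1$, where \cref{lemma: spum lower bound equality a1-2a1} is vacuous (the block $[1,2]$ carries no information). Here I would run the clustering count directly: $\deg(1) \le 2$ bounds the number of consecutive-integer pairs in $L$ by $3$, forcing at least $n - 2$ maximal runs, while $n - 3$ gaps permit at most $n - 2$ runs, so equality is forced throughout and $L$ is pinned down to exactly $n - 2$ short runs (three consecutive-label pairs in total) separated by single gaps tiling $[1, 2n - 2]$. I expect that analyzing the degrees of the remaining vertices in this rigid configuration (those adjacent to $2$ and the length-one runs) produces a vertex of degree $\ge 3$ or a disconnection for $n \ge 7$, contradicting that the graph is $P_n$; the analogous but easier analysis for each $a_1 \ge 2$ exploits the genuine length of the block $[a_1, 2a_1]$. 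Pinning the threshold to exactly $n \ge 7$, consistent with $3 \le n \le 6$ genuinely attaining $2n - 3$, is where the counting must be done with care, and any finitely many residual small configurations can be dispatched by the exhaustive search already recorded in \cref{tab:spum P_n}.
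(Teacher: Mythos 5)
Your setup is sound and matches the paper's: assume $\spum(P_n)=2n-3$, invoke \cref{remark: a1 = min L spum} and \cref{lemma: spum lower bound equality a1-2a1} to get $[a_1,2a_1]\subseteq S$, and track the budget of exactly $n-3$ missing values in the $2n-2$ integers of $[a_1,a_1+2n-3]$. Your $a_1=1$ analysis is also essentially correct and can be completed: at most three consecutive pairs forces at least $n-2$ maximal runs, the gap budget then forces exactly $n-2$ runs separated by single missing values, and in that rigid configuration $2\in L$ (else $\deg(1)=3$) while each of the $n-3$ single gaps produces a pair of labels at distance $2$, giving $\deg(2)\geq n-4\geq 3$ for $n \geq 7$, a contradiction.

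The genuine gap is your claim that the cases $a_1\geq 2$ are ``analogous but easier.'' They are neither, and this is where the whole strategy fails. Your clustering count is powered by the coincidence that, when $a_1=1$, pairs of labels differing by $a_1$ are exactly the consecutive pairs, so $\deg(a_1)\leq 2$ caps the run structure. For $a_1\geq 2$ the degree of $a_1$ says nothing about consecutive labels, and the regime that kills the strategy is $a_1$ comparable to $n$: the only a priori bound is $a_1\leq n-2$, and for, say, $a_1=n-4$ the forced block $[a_1,2a_1]\subseteq S$ is itself a run of about $n$ consecutive integers, so $L$ is maximally clustered and no run/gap-budget contradiction is available. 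Quantitatively, your constraint allows up to $3(a_1+1)$ pairs of labels with difference in $[a_1,2a_1]$, while a set of $n+1$ labels in an interval of $2n-2$ integers is only forced to contain roughly $(n-10a_1)/4$ such pairs, so the count bites only when $a_1$ is a small constant fraction of $n$. The large-$a_1$ regime is precisely where the paper does its real work: it eliminates $a_1\in\{n-2,n-3\}$ by explicit case analysis, proves $a_1\leq\frac{2n-5}{3}$ using the elements of $S$ above $2a_1$ together with degree constraints, proves $a_1\geq n-7$ by adapting Claim 2 of \cite{singla2021some}, and notes these conflict only when $n\geq 17$, leaving $7\leq n\leq 16$ to exhaustive computation (with $n=16$ requiring a search beyond \cref{tab:spum P_n}, which stops at $n=15$). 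Your fallback---that ``finitely many residual small configurations'' can be dispatched by \cref{tab:spum P_n}---does not close this: for every single $n\geq 7$ the values of $a_1$ that your counting cannot reach remain open, which is an infinite family of cases, not a finite residue.
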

\begin{proof}
From \cite{singla2021some} we have $\spum(P_n) \geq 2n-3$, so assume for the sake of contradiction that $\spum(P_n) = 2n-3$.
Suppose $L$ is a labeling with $\range(L)=2n-3$ that induces $P_n$ with one additional isolated vertex.
Suppose the set of labels of $P_n$ is $S=\{a_1,\dots,a_n\}$, where $a_1 < \cdots < a_n$, and suppose the isolated vertex is labeled $b$, so that $L=\{a_1,\dots,a_n\} \cup \{b\}$.
From \cref{lemma: spum lower bound equality a1-2a1}, $\spum(P_n)=2n-3$ implies $[a_1,2a_1]\subseteq S$.

As $a_n$ has at least one neighbor, $\max L \geq a_n + a_1 \geq 2a_1 + n - 1$, so $2n-3 = \range(L) \geq a_1 + n - 1$, and thus $a_1 \leq n-2$.
We now eliminate the cases where $a_1 = n-2$ or $a_1 = n-3$ so that we may assume $a_1 \leq n-4$.

If $a_1 = n-2$, then $b = 3n-5$.
We have $a_n \geq 2n-3$, and in order to be adjacent to some vertex this means $a_n = 2n-3$ and $a_n$ is (only) adjacent to $a_1$.
Hence $S=[n-2,2n-3]$, and we have a path $a_n,a_1,a_2,a_{n-1}$.
But $a_{n-1}=2n-4$ is not adjacent to any other vertices, so as $n \geq 7$, we find $L$ does not induce $P_n$ as assumed.
Thus $a_1\neq n-2$.

If $a_1 = n-3$, then $b=3n-6$, and either $a_n=2n-4$ or $a_n = 2n-3$.

Case 1: $a_n = 2n-4$.
This yields $S=[n-3,2n-4]$ and we have a path $a_n,a_2,a_1,a_3,a_{n-1}$, where $a_{n-1}=2n-5$.
However, $a_{n-1}$ is not adjacent to any other vertices, and as this path only has 5 vertices and $n \geq 7$, we find $L$ does not induce $P_n$ as assumed.

Case 2: $a_n = 2n-3$.
This yields $S=[n-3,2n-3]\setminus\{k\}$ for some $n-3<k<2n-3$.
In other words, it is only missing one integer in the interior of $[n-3,2n-3]$.
We notice that $a_1+a_n=b$ so $a_1$ is adjacent to $a_n$.
We also notice that $a_1 + n = a_n$, $a_1 + (n-1) = 2n-4$, and $a_1 + (n-2) = 2n-5$.
For $n \geq 7$, we have $n-2, n-1, n, 2n-5$, and $2n-4$ are all distinct labels in the interior of $[n-3,2n-3]$, and at most one of them is missing from $S$, which means that for at least two of the three given equations, all terms are in $S$ and thus $a_1$ is adjacent to at least two vertices with labels in $[n-2,n]$.
As $a_1$ is also adjacent to $a_n$, this yields $a_1$ has degree at least 3, which is forbidden.

We may now assume $a_1 \leq n-4$.
As $[a_1,2a_1]\subset S$ and $a_1+1 \leq n-3$, we have at least 3 elements in $S$ strictly larger than $2a_1$.
Any element $x\in S$ contained in $[2a_1+1,3a_1]$ would yield $x-a_1$ being adjacent to $a_1$, so as $\deg(v_1) \leq 2$, we can have at most 2 elements of $S$ in $[2a_1+1,3a_1]$.
If there are $x \leq 2$ such elements, there are at least $3-x \geq 1$ elements of $S$ strictly greater than $3a_1$, and the largest such element is at least $3a_1 + 3-x$.
If $x < 2$, this element is at least $3a_1+2$, and as its vertex has degree at least 1, we have $\max L \geq 3a_1 + 2 + a_1 = 4a_1 + 2$.
Otherwise $x=2$, which implies $v_1$ is already adjacent to two vertices whose labels are in $[a_1+1,2a_1]$, so $\max S \geq 3a_1 + 1$ and its vertex has degree at least 1, but is not adjacent to $v_1$, so we still have $\max L \geq 3a_1 + 1 + a_1 + 1 = 4a_1 + 2$.
This means $\spum(P_n)=2n-3 \geq 3a_1 + 2$, so $a_1 \leq \frac{2n-5}{3}$.

Similarly, the same reasoning as in the proof of \cite[Claim 2]{singla2021some} with only some very minor modifications implies $a_1 \geq n-7$.
The inequalities $n-7 \leq a_1 \leq \frac{2n-5}{3}$ yield no integer solution for $a_1$ when $n \geq 17$, so for $n \geq 17$ we have $\spum(P_n) \neq 2n-3$.

\cref{tab:spum P_n} gives that $\spum(P_n) \geq 2n-2$ for $7 \leq n \leq 15$.
The final remaining case of $n=16$ can be easily verified via computer search.
We must have $a_1=9$ and $[9,18]\subset S$ with $b=9+29=38$, which leaves 6 more labels for $S$ that must be within $[19,37]$, so only $\binom{19}{6}=27132$ labelings must be checked.
\end{proof}

Together, these results prove \cref{theorem: spum path best bounds}.

\section{Spum of cycles}\label{section: spum cycle}
In this section we show that $\spum(C_n)=2n-1$ for $n \geq 4$, where $C_n$ is the cycle graph on $n$ vertices.
This corrects for a logical flaw in the proof by Singla, Tiwari, and Tripathi \cite{singla2021some} that $\spum(C_n)=2n-1$ for $n \geq 13$, as well as extending it to include $4 \leq n \leq 12$.

The spum and integral spum were determined exactly for complete graphs in \cite{singla2021some}, so as the cycle graphs are complete for $n \leq 3$, we only consider cycle graphs $C_n$ for $n \geq 4$.

We know from \cite{harary1990sum} that the sum number $\sigma(C_n)=2$, except for $n=4$ where $\sigma(C_4)=3$.
Likewise, from Sharary \cite{sharary1996integral} the integral sum number $\zeta(C_n)=0$, except for $n=4$ where $\zeta(C_4)=3$, from Xu \cite{xu1999integral}.

\begin{remark}\label{remark: singla error spum cycle proof}
The authors of \cite[Theorem 6.1]{singla2021some} show that $\spum(C_n) \leq 2n-1$ for all $n \geq 4$; however, as $\sigma(C_n)$ is different for $n=4$, their construction does not hold for $n=4$.
We provide a valid construction in \cref{theorem: spum C_n} for this case, and thus the result still holds, despite the original proof not accounting for $n=4$.

A lower bound of $\spum(C_n) \geq 2n-2$ for all $n \geq 4$ follows from \cite[Theorem 2.1]{singla2021some}, so $2n-2 \leq \spum(C_n) \leq 2n-1$.

Then in \cite[Theorem 6.2]{singla2021some}, it is claimed that $\spum(C_n)=2n-1$ for all $n \geq 13$.
However, in this proof, in particular the proof of Claim 1, they argue that a vertex with label $2a_1$, where $a_1 = \min L$ is the minimum label, is adjacent to a vertex with label greater than $a_1$.
They then claim that this implies the maximum label $a_n$ associated to one of the $n$ vertices of $C_n$ is greater than $3a_1$.
Unfortunately, while it can be concluded that a vertex with label greater than $3a_1$ exists in $L$, this vertex does not necessarily belong to $C_n$, as it can be one of the two isolated vertices.
As a simple example of this, consider the labeling $L=[3,6]\cup[8,10]$, which induces $C_4$ with the four labels in $[3,6]$ constituting $C_4$, while the three labels in $[8,10]$ are additional isolated vertices.
In this case $a_1 = 3$, and $2a_1=6$ is indeed adjacent to a label greater than $a_1=3$, namely 4.
Hence $\max L = 10 > 9 = 3a_1$, but their statement that $a_n > 3a_1$ is false, as $a_n=6$ while $3a_1 = 9$.

In \cref{theorem: spum C_n}, we prove this result, and extend it to also address the $4 \leq n \leq 12$ cases, thus including all $n \geq 4$.
So while the original proof may not be correct, the result of \cite[Theorem 6.2]{singla2021some} is still true.
Many of the techniques used in this proof are inspired by the original argument from \cite{singla2021some}.
\end{remark}

\begin{theorem}\label{theorem: spum C_n}
For $n \geq 3$, we have $\spum(C_n) = \begin{cases} 6 & n = 3 \\ 2n-1 & n \geq 4. \end{cases}$
\end{theorem}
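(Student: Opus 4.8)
The plan is to separate the routine reductions from the genuine content, which is the lower bound $\spum(C_n) \geq 2n-1$ for $n \geq 4$. For $n = 3$ we have $C_3 = K_3$, so $\spum(C_3) = 4\cdot 3 - 6 = 6$ by the known value of $\spum(K_n)$. For the upper bound $\spum(C_n) \leq 2n-1$ when $n \geq 5$, I would invoke the construction of \cite[Theorem 6.1]{singla2021some}, which is valid in this range; for the exceptional case $n = 4$, where $\sigma(C_4) = 3$ rather than $2$, I would exhibit the explicit labeling $L = \{3,4,5,6\} \cup \{8,9,10\}$ from \cref{remark: singla error spum cycle proof}, whose four smallest labels induce $C_4$ (the edges being $\{3,5\},\{3,6\},\{4,5\},\{4,6\}$, i.e.\ the cycle $3\,5\,4\,6$) with three isolated vertices and $\range(L) = 7 = 2\cdot 4 - 1$. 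It then remains to prove $\spum(C_n) \geq 2n-1$ for all $n \geq 4$, and since \cite[Theorem 2.1]{singla2021some} already gives $\spum(C_n) \geq 2n-2$ (as $\Delta = \delta = 2$ for cycles), it suffices to rule out $\spum(C_n) = 2n-2$.

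Assume for contradiction that some labeling $L$ induces $C_n$ together with $\sigma(C_n)$ isolated vertices and achieves $\range(L) = 2n-2$. Write the cycle labels as $S = \{a_1 < \cdots < a_n\}$. By \cref{remark: a1 = min L spum}, $\min L = a_1$, so $\max L = a_1 + 2n - 2$, and since $C_n$ is regular with $\Delta = \delta = 2$, \cref{lemma: spum lower bound equality a1-2a1} forces $[a_1, 2a_1] \subseteq S$. Two structural facts drive the argument. First, because $\deg(a_1) = 2$ and each label $x \in (2a_1, 3a_1] \cap S$ makes $x - a_1 \in (a_1, 2a_1] \subseteq S$ a neighbor of $a_1$ (as $x = a_1 + (x-a_1) \in L$), at most two elements of $S$ lie in $(2a_1, 3a_1]$. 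Second, not only the maximum label but also the second-largest label of $L$ must be isolated: a vertex carrying it would need two distinct neighbors, each summing with it to a strictly larger label, but the only label exceeding it is $\max L$, so the two neighbor-sums would coincide. Combined with $\max L \geq a_n + a_1 \geq 2a_1 + n - 1$, this already yields $a_1 \leq n - 1$.

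For $n \geq 5$ we have $\sigma(C_n) = 2$, so the two isolated labels are exactly the two largest labels of $L$ and $|L| = n+2$; in particular the two neighbor-sums of $a_n$ are precisely these two isolated labels, giving $\max L = a_n + q$ where $q$ is the larger neighbor of $a_n$, whence $q \leq n - 1$. I would then combine the upper bound on $a_1$ with a matching lower bound obtained exactly as in \cite[Claim 2]{singla2021some}: since essentially only the $a_1$ labels of $[a_1, 2a_1]$ are available to serve as the small endpoints of the edges incident to the $n - a_1 - 1$ large vertices, $a_1$ cannot be much smaller than $n$, forcing a bound of the form $a_1 \geq n - O(1)$. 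The two bounds confine $a_1$ to a bounded window depending only on a constant, leaving finitely many values of $n$, which I would dispatch by the bounded computer search used for the path in \cref{theorem: spum path lower bound}, enumerating candidate label sets inside $[a_1, a_1 + 2n-2]$ for each admissible $a_1$.

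The case $n = 4$ is cleanest: here $\sigma(C_4) = 3$, so $|L| = 7$, while $\range(L) = 2n - 2 = 6$ forces $L$ to consist of all seven integers in $[a_1, a_1 + 6]$; a short check shows no seven consecutive positive integers induce a $4$-cycle together with three isolated vertices, giving the contradiction. The main obstacle is the lower bound on $a_1$ when $n \geq 5$, which is exactly where the original argument of \cite{singla2021some} erred by assuming a label exceeding $3a_1$ belongs to the cycle rather than possibly being one of the isolated vertices; the careful accounting of which large labels are isolated, formalized through the two-largest-isolated observation above, is the crux that must be executed correctly.
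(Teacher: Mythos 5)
Your reductions are sound: the $n=3$ case, both upper bounds (including the corrected construction for $n=4$, where $\sigma(C_4)=3$), the $n=4$ lower bound via seven consecutive integers, and the two structural facts you isolate all match the paper's strategy. But the core of your lower-bound argument for $n \geq 5$ has a genuine gap. From your second structural fact you obtain only $a_1 \leq n-1$, and from \cite[Claim 2]{singla2021some} you get $a_1 \geq n-7$. These confine $a_1$ to a window of constant \emph{width}, but that window slides with $n$: for every single $n \geq 5$, the values $a_1 \in \{n-7,\dots,n-1\}$ remain admissible. This does not leave ``finitely many values of $n$''; it leaves infinitely many pairs $(n,a_1)$, and the proposed enumeration of label sets inside $[a_1, a_1+2n-2]$ grows with $n$, so no bounded computer search can terminate the proof. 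The conclusion you draw from your two bounds simply does not follow.

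What is missing is precisely the use you never make of your first structural fact. Since $[a_1,2a_1] \subseteq S$ accounts for $a_1+1$ elements of $S$, once $a_1 \leq n-4$ there are at least three elements of $S$ above $2a_1$; at most two can lie in $[2a_1+1,3a_1]$ (your first fact), so some cycle label exceeds $3a_1$, and because that vertex has degree $2$ (and, when exactly two elements lie in $[2a_1+1,3a_1]$, cannot be adjacent to $a_1$), one gets $\max L \geq 4a_1+3$, hence $2n-2 \geq 3a_1+3$, i.e.\ $a_1 \leq \frac{2n-5}{3}$. This is the bound with slope strictly less than $1$ in $n$ which, combined with $a_1 \geq n-7$, actually bounds $n$ (the paper refines it further to $a_1 \leq \frac{n+2}{3}$ when $a_1 \leq n-6$, forcing $n \leq 11$), after which the finitely many remaining cases are dispatched by hand or by computer, together with an argument that $a_1 \geq 2$. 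Note also that this derivation requires $a_1 \leq n-4$, so the cases $a_1 = n-2$ and $a_1 = n-3$ must be eliminated separately by explicit case analysis, as the paper does; your plan omits this step as well.
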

\begin{proof}
If $n=3$, we recover $C_3=K_3$, whose spum was found in \cite{singla2021some} to be 6.

We first separately address the case $n=4$.
As $\sigma(C_4)=3$, we have 7 total vertices.
Notice that $L=[3,6]\cup[8,10]$ induces a sum graph consisting of $C_4$ and three isolated vertices, so $\spum(C_4) \leq \range(L)=7$.

Assume for the sake of contradiction that $\spum(C_4)\neq 7$, so we have $\spum(C_4)=6$, meaning the labels have to be a consecutive block of 7 positive integers.
Let the smallest of these seven integers be $a$.
For $a \leq 3$, we have that $a$ has degree strictly larger than 2, so this cannot yield $C_4$.
For $a \geq 4$, the only edges in the induced sum graph are incident to $a$, because for any two distinct vertices $b$ and $c$ that are both not $a$, we have $b+c \geq 2a+3 \geq a+7$, while $\max L = a+6$.
This cannot yield $C_4$.
Hence we have $\spum(C_4)=7$, as desired.

From \cite[Remark 6.1]{singla2021some}, we know $2n-2 \leq \spum(C_n) \leq 2n-1$ for all $n \geq 5$.

Assume for the sake of contradiction that $\spum(C_n)=2n-2$.
Suppose the vertices of $C_n$ are labeled $a_1<\cdots<a_n$, and the two isolated vertices are labeled $b<c$ in a labeling $L$ that achieves $\range(L) = 2n-2$.
Notice that in order for $a_n$ to be adjacent to two other labels $a_i$ and $a_j$ for $i < j < n$, we must have $b=a_i+a_n$ and $c=a_j+a_n$, so the total ordering of our $n+2$ labels is $a_1 <\cdots<a_n<b<c$.
Let $S=\{a_i\mid 1 \leq i \leq n\}$.
From \cref{lemma: spum lower bound equality a1-2a1}, we know $\spum(C_n)=2n-2$ implies $[a_1,2a_1] \subset S$.
Notice that $a_1 \leq n-2$, as $a_n \geq a_1 + n-1$, so
\[ \spum(C_n) \geq (a_1+n-1+a_1+1)-a_1 = a_1 + n, \]
and thus $a_1+n \leq 2n-2$ yields $a_1 \leq n-2$.

We first eliminate the possibility that $a_1 = n-2$.
If this were so, then $\max L = c = 3n-4$.
We have $a_n \geq 2n-3$, and as $a_n$ is adjacent to two labels in $S$, this yields
\[ \max L \geq (2n-3)+(n-1)=3n-4.\]
As we need this inequality to be sharp, we require $a_n = 2n-3$ and $a_n$ being adjacent to $a_1$ and $a_2$.
With $a_n=2n-3$, this means $S = [n-2,2n-3]$.
We must have $a_n + a_1 = 3n-5 \in L\setminus S$, so $b=3n-5$.
However, we find that $a_1 + a_2 = 2n-3 = a_n$, so $a_1$ and $a_2$ are adjacent.
Thus $a_1$, $a_2$, and $a_n$ form a triangle, so this does not yield $C_n$ for $n \geq 5$.

Now we eliminate the possibility that $a_1 = n-3$.
If this were so, then $\max L = c = 3n-5$.
We have $a_n \geq 2n-4$, so $\max L \geq (2n-4)+(n-2)=3n-6$.
As $c = 3n-5$, we have a slackness of 1, which affords only a small number of cases:
\begin{enumerate}[label=\arabic*)]
    \item $a_n = 2n-4$ and $a_n$ is adjacent to $a_1$ and $a_2$.
    
    As $a_n = 2n-4$, this requires $S = [n-3,2n-4]$.
    We have $a_1 + a_n = 3n-7 \in L\setminus S$, so $b=3n-7$.
    We also have $a_2 + a_n = (n-2)+(2n-4) = 3n-6\in L\setminus S$, so $c=3n-6$, contradicting the fact that $c=3n-5$.
    \item $a_n = 2n-4$ and $a_n$ is adjacent to $a_1$ and $a_3$.
    
    Notice that $a_1 + a_3 = (n-3) + (n-1) = 2n-4 = a_n$, so we have a triangle between $a_1$, $a_3$, and $a_n$, contradicting the fact that our induced sumgraph is $C_n$ for $n \geq 5$.
    \item $a_n = 2n-4$ and $a_n$ is adjacent to $a_2$ and $a_3$.
    
    We have $a_2 + a_n = 3n-6 = b$, while $a_3 + a_n = 3n-5 = c$.
    Then $a_{n-1}=2n-5$ is adjacent to $a_3 = n-1$ as $a_3 + a_{n-1} = b$.
    So $a_3$ is adjacent to $a_n$ and $a_{n-1}$.
    However, notice that $a_1 + a_3 = (n-3) + (n-1) = 2n-4 = a_n$, so $a_3$ is also adjacent to $a_1$, giving $a_3$ degree 3, contradicting the fact that each vertex in $C_n$ has degree 2.
    \item $a_n = 2n-3$ and $a_n$ is adjacent to $a_1$ and $a_2$.
    
    We have $a_2 \geq n-2$, but $a_2 + a_n \geq 3n-5 = c$, so we need equality and thus $a_2 = n-2$.
    We find $a_1 + a_n = 3n-6 \in L \setminus S$, so $b=3n-6$.
    With $a_n = 2n-3$, we have $S = [n-3,2n-3]\setminus\{k\}$ for some integer $k$ where $n-1\leq k\leq 2n-4$.
    We cannot have an edge between $a_1$ and $a_2$, or else $a_1$, $a_2$, and $a_n$ form a triangle.
    So $a_1 + a_2 = 2n-5 \not\in S$, which means $k=2n-5$.
    
    Then $a_1 + a_3 = 2n-4 \in S$, so $a_1$ and $a_3$ are adjacent.
    If $n \geq 6$, then $2n-5 \geq n+1$, so $a_4 = n$ and $a_1 + a_4 = 2n-3 = a_n$, so $a_1$ is adjacent to $a_3$, $a_4$, and $a_n$, contradicting the fact that it must have degree 2.
    
    Otherwise $n=5$, and we have $S=\{2,3,4,6,7\}$ with $b=9$ and $c=10$.
    We observe 3 is adjacent to 4, 6, and 7, contradicting the fact that it must have degree 2.
\end{enumerate}

So we can now assume $a_1 \leq n-4$.
As $[a_1,2a_1] \subset S$, which accounts for $a_1 + 1 \leq n-3$ of the elements of $S$, we have at least 3 elements in $S$ that are strictly larger than $2a_1$.
Any such element $x$ contained in $[2a_1+1,3a_1]$ would cause $x-a_1$ to be adjacent to $a_1$, so as $\deg(v_1)=2$, we can have at most 2 elements of $S$ in $[2a_1+1,3a_1]$.
If there are $x\leq 2$ such elements, there are at least $3-x \geq 1$ elements of $S$ greater than $3a_1$.
The largest such element is at least $3a_1+3-x$, and if $x< 2$ then this element is at least $3a_1 + 2$, and its vertex has degree 2, so $\max L \geq 3a_1 + 2 + a_1 + 1 = 4a_1 + 3$.
Otherwise $x=2$, which implies $v_1$ is already adjacent to two vertices whose labels are in $[a_1+1,2a_1]$, so while the element is at least $3a_1+1$, its vertex has degree 2 and is not adjacent to $v_1$, so we still have $\max L \geq 3a_1 + 1 + a_1 + 2 = 4a_1 + 3$.
This means $\spum(C_n) = 2n-2 \geq 3a_1 + 3$, so $a_1 \leq \frac{2n-5}{3}$.

On the other hand, we have $a_1 \geq n-7$ from \cite[Claim 2]{singla2021some}.

We now analyze what occurs if $a_1 \leq n-6$.
As $a_1$ is adjacent to all elements of
\[ \left(S\cap [2a_1+1,3a_1]\right)-a_1 \subseteq [a_1+1,2a_1], \]
we have $|S\cap [2a_1+1,3a_1]| \leq 2$, and similarly, as $2a_1$ is adjacent to 2 labels, we have $|S\cap [3a_1,4a_1-1]| \leq 2$.
So
\[ |S\cap [a_1,4a_1-1]| \leq a_1 + 5 \leq n-1,\]
and thus we have $a_n \geq 4a_1 + n - a_1 - 6 = 3a_1 + n - 6$.
Let $x=|S\cap [2a_1+1,3a_1]|$; if $x=2$, then $a_1$ cannot be adjacent to $a_n$, meaning $\max L \geq 4a_1 + n - 4$.
Otherwise $x < 2$, so we improve our original bound $|S\cap [a_1,4a_1-1]| \leq a_1 + 4$, yielding $a_n \geq 4a_1 + n - a_1 - 5$, and so $\max L \geq 4a_1 + n - 4$.
In either case, we have $a_1 + 2n-2 =\max L \geq 4a_1 + n - 4$, which yields $a_1 \leq \frac{n+2}{3}$.

Notice that for $n \geq 11$, we have $a_1 \leq \left\lfloor\frac{2n-5}{3}\right\rfloor \leq n-6$, and thus $n-7 \leq a_1 \leq \frac{n+2}{3}$, which admits no integer solutions for $a_1$ when $n \leq 11$.
So we have proven the result for all $n \geq 12$.

The only remaining cases are $5 \leq n \leq 11$, where we know $\max\{1,n-7\} \leq a_1 \leq \left\lfloor\frac{2n-5}{3}\right\rfloor$.

We now show that $a_1 \geq 2$ by showing that if $a_1=1$, the degree of $a_1$ is at least 3.
If $a_1=1$, then $c=2n-1$ and $a_1$ could be adjacent to $x$, for all integers $2 \leq x \leq 2n-2$.
There are $2n-3$ such integers $x$.
The only way for $x$ to not be adjacent to $a_1$ is if $x\not\in L$ or $x+1 \not\in L$.
As $b$ and the $n-1$ other labels in $S$ are contained in $[2,2n-2]$, there are only $2n-3-n=n-3$ values in this interval not contained in $L$.
Each such missing value $y$ can prevent $a_1$ from being adjacent to $y$ and $y-1$, so these $n-3$ missing values obstruct at most $2n-6$ potential edges out of the $2n-3$, meaning that $a_1$ has at least 3 incident edges.

For $n=5$, our original $a_1 \leq n-4$ bound gives $a_1 \leq 1$, but we also need $a_1 \geq 2$, which yields no solution.
For $n=11$, we have $a_1 \leq 5 = n-6$ which requires $a_1 \leq \frac{n+2}{3}$, so we can eliminate the possibility of $a_1 = 5$.
Hence, we have reduced our problem to $6 \leq n \leq 11$, with the following bounds on $a_1$ for each remaining value of $n$, derived from $\max\{2,n-7\} \leq a_1 \leq \left\lfloor\frac{2n-5}{3}\right\rfloor$:
\begin{itemize}
    \item If $n=6$, then $2 \leq a_1 \leq 2$.
    \item If $n=7$, then $2 \leq a_1 \leq 3$.
    \item If $n=8$, then $2 \leq a_1 \leq 3$.
    \item If $n=9$, then $2 \leq a_1 \leq 4$.
    \item If $n=10$, then $3 \leq a_1 \leq 5$.
    \item If $n=11$, then $4 \leq a_1 \leq 4$.
\end{itemize}

These remaining cases can be easily ruled out by computer search.
We have bounded $a_1 = \min L$, and as we assume $\range(L) = 2n-2$, we have $\max L = a_1 + 2n-2$.
Between these two values we must pick $n$ more integer values to form $L$, and one can simply check that the induced sum graphs for all such subsets do not yield $C_n$.

If one wanted to do these remaining cases by hand, the following is an example for the case $n=6$.
The other cases would follow similarly, though they would be tedious.

For $n=6$, we have $2,3,4\in S$, and $c=12$.
Among 5, 6, and 7, we can have at most 2 elements; otherwise, 2 is adjacent to 3, 4, and 5.
This yields $a_n \geq 8$, but also notice that $\max L = 12 \geq a_n + a_2 = a_2 + 3$, so $a_n \leq 9$.
If $a_n = 8$, then 8 and 4 are adjacent, and 8 is also adjacent to either 2 or 3.
However, 8 cannot be adjacent to 2, as then $b=10$, but then $a_1=2$ and $b=10$ would be adjacent.
So 8 is adjacent to 3 and 4; additionally, 7 cannot be in $S$, as otherwise 3, 4, and 8 would form a triangle.
So the remaining two elements must be 5 and 6, but this yields 2 being adjacent to 3 and 4, forming a 4-cycle with 2, 3, 4, and 8, while we want a 6-cycle.
Otherwise, we have $a_n=9$, which must be adjacent to 2 and 3, so $b=11$.
As 2 is adjacent to some element in $\{3,4,5\}$ for each element in $S\cap \{5,6,7\}$, we have $|S \cap \{5,6,7\}| \leq 1$, forcing $a_5 = 8$.
This gives a path 2--9--3--8--4, and the only element in $\{5,6,7\}$ which when added to $S$ would create an edges between itself and both 2 and 4 is 7, which would cause an additional edge between 3 and 4, thus not creating $C_6$.
\end{proof}

\section{Integral spum of cycles}\label{section: ispum cycle}
The spum of cycles has now been determined as $\spum(C_n)=2n-1$.
It was previously bounded within $[2n-2,2n-1]$; on the other hand, the best previously known bounds for $\ispum(C_n)$ are significantly worse, namely
\begin{align}\label{eq: ispum cycle original bound}
    2n-5 \leq \ispum(C_n) \leq \begin{cases} 17(n-9) & \text{if } n \text{ is odd} \\ \frac{3}{2}(3n-14) & \text{if } n \text{ is even},\end{cases}
\end{align}
where this upper bound, due to Melnikov and Pyatkin \cite{melnikov2002regular}, only holds for $n \geq 10$.
The lower bound is due to Singla, Tiwari, and Tripathi \cite{singla2021some}.

Recall that $\zeta(C_n)=0$ for $n \geq 5$.
Let $x = 2n-5$.
If $L$ is a labeling of $C_n$ with range $x$, then it must contain at least one label of each sign, so $1-x \leq \min L \leq -1$ and $1 \leq \max L \leq x-1$.
Hence, $\ispum(C_n) = x$ if and only if there exists a set $L \subset [1-x, 1+x]$ with range $x$ that induces $C_n$.
Otherwise, we increase $x$ by one and repeat.
Using an exhaustive search on a computer, we can use this to compute $\ispum(C_n)$ for $5 \leq n \leq 14$.
The results are in \cref{tab:ispum C_n}.

For $n=4$, we have $\zeta(C_4)=\sigma(C_4)=3$, so $\ispum(C_4) \geq 6$.
As 0 cannot be a label, if the labels are all of the same sign, then $\range(L) \geq \spum(C_4)=7$.
If there exist positive and negative labels, then $0 \in [\min L, \max L] \setminus L$, so $\range(L) \geq 7$.
A construction with $\range(L)=7$ is possible, so $\ispum(C_4)=7$.

\begin{table}[htbp]
    \centering
    \setlength{\tabcolsep}{12pt}
    \begin{tabular}{lll}
        \hline
        $n$ & Lexicographically first optimal labeling & $\ispum(C_n)$ \\
        \hline
        4 & $\{-10, -9, -8, -6, 5, -4, -3\}$ & 7 \\
        5 & $\{-3, -2, -1, 1, 2\}$ & 5 \\
        6 & $\{-5, -3, -2, -1, 2, 3\}$ & 8 \\
        7 & $\{-7, -5, -4, -3, 1, 2, 4\}$ & 11 \\
        8 & $\{-11, -10, -8, -7, -3, -1, 1, 3\}$ & 14 \\
        9 & $\{-9, -8, -7, -4, -2, 1, 2, 4, 8\}$ & 17 \\
        10 & $\{-13, -12, -10, -9, -4, -3, -2, 2, 3, 4\}$ & 17 \\
        11 & $\{-16, -15, -12, -11, -5, -4, -3, -2, 3, 4, 5\}$ & 21 \\
        12 & $\{-19, -18, -17, -14, -13, -6, -5, -3, 3, 4, 5, 6\}$ & 25 \\
        13 & $\{-20, -19, -15, -14, -7, -6, -5, -4, -3, 3, 4, 5, 6\}$ & 26 \\
        14 & $\{-26, -25, -22, -21, -19, -14, -13, -12, -6, -5, -4, 3, 4, 5\}$ & 31 \\
        \hline
    \end{tabular}
    \caption{Initial values of $\ispum(C_n)$.}
    \label{tab:ispum C_n}
\end{table}

We improve the upper bounds in the following result.
\begin{theorem}\label{theorem: ispum cycle upper bound}
For $n \geq 12$, we have $\ispum(C_n) \leq \begin{cases} 8(n-9) & \text{if } n \text{ is odd} \\ \frac{3}{2}(3n-14) & \text{if } n \text{ is even}.\end{cases}$
\end{theorem}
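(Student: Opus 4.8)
The plan is to treat the two parities separately, since only the odd bound is a genuine improvement: for even $n$ the claimed bound $\frac{3}{2}(3n-14)$ is exactly that of Melnikov and Pyatkin \cite{melnikov2002regular}, so I would simply reuse (or reproduce) their even-cycle construction and verify its range. The substance is the odd case, where the goal is to lower the slope from $17$ to $8$. For each odd $n$ above a small threshold I would exhibit an explicit set $L \subset \Z$ with $|L| = n$ whose induced integral sum graph is exactly $C_n$ (legitimate since $\zeta(C_n) = 0$ for $n \geq 5$, so no extra isolated vertices are allowed or needed) and with $\range(L) \leq 8(n-9)$.

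Concretely, I would build $L$ as a union of a bounded number of blocks of consecutive integers, some negative and some positive, with the block sizes and offsets given as explicit affine functions of $n$, modeled on the structure visible in the optimal labelings recorded in \cref{tab:ispum C_n}. The parameters would be chosen so that the total number of labels is exactly $n$ and the range satisfies $\max L - \min L \leq 8(n-9)$; the latter is a direct computation once the two outermost blocks are fixed, so the only real work is the combinatorics of the adjacencies.

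To verify that $L$ induces $C_n$, I would first exhibit the intended Hamiltonian cycle explicitly, giving the cyclic order $v_1, v_2, \ldots, v_n, v_1$ of the labels and checking that each consecutive sum $v_i + v_{i+1}$ lies in $L$ (these intended edges are forced by the block design, e.g.\ a negative label plus a positive label landing inside a negative block). The crux, and the step I expect to be the main obstacle, is showing there are no \emph{extra} edges: for every non-consecutive pair I must argue that $u + v \notin L$. I would organize this by the sign pattern of the summands (negative plus negative, negative plus positive, positive plus positive) and, within each case, by which blocks $u$ and $v$ lie in, bounding each such sum away from every block of $L$ except at the intended adjacencies. Controlling these inter-block sums uniformly in $n$ is where the delicacy lies, since a single misaligned offset would create a spurious edge or raise some degree above $2$, destroying the cycle structure.

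Finally, I would reconcile the construction with the small cases: the explicit family only needs to hold for $n$ above the threshold, and the finitely many remaining odd values (together with all even values through the range of \cref{tab:ispum C_n}) are already settled by the exhaustive computer search described just before the theorem, which in particular confirms $\ispum(C_n) \leq 8(n-9)$ there. Assembling the odd construction, the even construction of \cite{melnikov2002regular}, and these base cases then yields the stated bound for all $n \geq 12$.
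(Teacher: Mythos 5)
Your outline follows the paper's proof exactly in structure: the even case is taken verbatim from Melnikov--Pyatkin, the case $n=13$ is read off from \cref{tab:ispum C_n}, and the odd case $n \geq 15$ is to be settled by an explicit labeling built from a bounded number of blocks of consecutive integers with range $8(n-9)$. The problem is that your proposal stops precisely where the content of the theorem begins: you never write down the blocks. Saying the labeling will be ``a union of blocks whose sizes and offsets are affine functions of $n$, modeled on the table'' is a template, not a construction; the existence of offsets for which every consecutive sum lands in $L$ while \emph{no} non-consecutive sum does is exactly what the theorem asserts, and it is genuinely delicate --- a single misaligned block creates a spurious edge, as you yourself note. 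Deferring that step means the theorem is not proved: there is no object whose adjacencies one could even begin to check, and no way to confirm that the slope $8$ (rather than, say, $10$ or $17$) is attainable.

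For comparison, the paper's construction for $n = 2k+9$, $k \geq 3$, is
\begin{align*}
    L = [-8k,-7k+1]\cup[4k,5k]\cup\{-3k,-3k+1\}\cup\{-5k,-k-1\}\cup\{7k-1,8k\},
\end{align*}
which has exactly $2k+9$ labels (so no isolated vertices, as required since $\zeta(C_n)=0$) and range $8k-(-8k)=16k=8(n-9)$. The cycle interleaves the two long blocks, $-8k, 5k, -8k+1, 5k-1, \dots, 4k, -7k+1$, with consecutive sums alternating between $-3k$ and $-3k+1$, and then closes up through $-k-1, 8k, -3k, -5k, -3k+1, 7k-1$ back to $-8k$; one then checks no further sums of two labels lie in $L$ when $k \geq 3$. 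Your plan correctly identifies all the pieces that must surround such a construction (the sign-pattern case analysis for ruling out extra edges, the base case $n=13$, the even case), but the construction itself is the missing idea, so the proposal as written has a genuine gap.
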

\begin{proof}
For the even case, the result is unchanged from \cref{eq: ispum cycle original bound}.

For the odd case, we reference \cref{tab:ispum C_n} to find that the bound holds for $n=13$.
We now assume $n\geq15$.
We construct a labeling $L$ for $C_{2k+9}$, where $k \geq 3$.

Let $L$ be the set
\begin{align*}
    L = [-8k,-7k+1]\cup[4k,5k]\cup\{-3k,-3k+1\}\cup\{-5k,-k-1\}\cup\{7k-1,8k\}.
\end{align*}
This induces a cycle given by the following list of cyclically adjacent vertices:
\begin{align*}
    -8k,5k,-8k+1,5k-1,\dots,4k,-7k+1,-k-1,8k,-3k,-5k,-3k+1,7k-1,-8k.
\end{align*}
The initial sequence is an interleaved sequence of $-8k$ increasing to $-7k+1$ and $5k$ decreasing to $4k$.
One can easily confirm that the edges in this cycle exist, and that $L$ consists of $2k+9$ distinct labels.
It is slightly more tedious, but straightforward, to verify that no other edges exist in the induced sum graph, as long as $k \geq 3$.

Hence, $\ispum(C_n) \leq \range(L) = 16k = 8(n-9)$.
\end{proof}

\section{Spum and integral spum of matchings}\label{section: spum ispum nK2}
The spum and integral spum of matchings $nK_2$, the disjoint union of $n$ copies of $K_2$, have not been studied previously.
However, Harary \cite{harary1990sum} showed that $\zeta(nK_2)=0$ for all positive integers $n$, so the study of these spum values is tractable.
Before we address the integral spum of $nK_2$, however, we first have the following result for spum.
\begin{theorem}\label{theorem: spum nK2}
For all positive integers $n$, we have $\sigma(nK_2)=1$ and $\spum(nK_2)=4n-2$.
\end{theorem}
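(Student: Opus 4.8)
The plan is to establish the result in two halves: first compute $\sigma(nK_2)=1$, then prove matching upper and lower bounds $\spum(nK_2)\le 4n-2$ and $\spum(nK_2)\ge 4n-2$. For the sum number, note that $nK_2$ has no isolated vertices, so $\sigma(nK_2)\ge 1$; it then suffices to exhibit a single labeling $L\subset\Z_+$ of the $2n$ matched vertices plus one isolated vertex that induces $nK_2$, which forces $\sigma(nK_2)=1$. The natural move is to use the isolated (largest) label $c=\max L$ as the common ``sum target'': pair up labels $x_i,y_i$ so that $x_i+y_i=c$ for each of the $n$ edges, while arranging that no other pairwise sum lands in $L$. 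This simultaneously handles the upper bound, so I would combine the $\sigma$ computation with the construction for $\spum\le 4n-2$.

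For the upper bound, I would write down an explicit $L$ of range $4n-2$. A clean choice is to take the two endpoints of each $K_2$ to be $\{2n-1+i,\ 2n-1-i\}$ (or a similar symmetric pairing) summing to a fixed value, together with one extra isolated label at the top equal to that common sum. Concretely something like $L=\{1,2,\dots,2n-1\}\cup\{\text{matched large labels}\}\cup\{c\}$ should be tuned so that $\min L=a_1$, $\max L=a_1+(4n-2)$, every edge-sum equals $c=\max L$, and crucially every \emph{non}-edge sum either exceeds $\max L$ or misses $L$. The verification that these are the only edges is the routine-but-necessary check: any two labels from distinct edges must not sum to an element of $L$, and two labels that are \emph{not} partners must not sum into $L$ either. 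I would present the labeling, describe the perfect matching it induces, and then verify the edge set by a short case analysis on the sizes of the summands.

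For the lower bound I would invoke \cref{lemma: spum lower bound equality a1-2a1}. Here $G=nK_2$ has $n'=2n$ vertices and is regular with $\Delta=\delta=1$, so the general bound gives $\spum(nK_2)\ge 2(2n)-(\Delta-\delta)-2=4n-2$; that is already the exact value, so the lower bound requires nothing beyond \cite[Theorem 2.1]{singla2021some} as restated in the excerpt. The only subtlety is confirming that the hypothesis ``without isolated vertices'' applies (it does, since $nK_2$ has none) and that plugging $n\mapsto 2n$, $\Delta=\delta=1$ is correct. Thus the lower bound is immediate and \cref{lemma: spum lower bound equality a1-2a1} is not even needed for the matchings case unless one wants structural information.

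The main obstacle will be the construction and its verification: designing a set $L$ with range exactly $4n-2$ whose induced sum graph is \emph{precisely} $nK_2$ with one isolated vertex, and then checking that no spurious edges appear among the $\binom{2n}{2}$ pairs across different matched pairs. I expect the cleanest route is to make all intended edge-sums collapse to the single value $\max L$ (so partners are detected by summing to the top label) while choosing the labels densely enough near $a_1$ and sparsely enough near the top that every other pairwise sum overshoots $\max L$ or falls in a gap of $L$; getting the symmetric pairing to fit inside range $4n-2$ without creating an accidental sum is the delicate part, and I would verify it by bounding the smallest ``dangerous'' cross-sum from below by $\max L+1$.
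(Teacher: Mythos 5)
Your overall architecture is exactly the paper's: $\sigma(nK_2) \geq 1$ since $nK_2$ has no isolated vertices; a single explicit labeling in which the $n$ matched pairs all sum to one top isolated label establishes both $\sigma(nK_2) \leq 1$ and $\spum(nK_2) \leq 4n-2$; and the lower bound is immediate from the general degree bound of \cite[Theorem 2.1]{singla2021some} applied with $2n$ vertices and $\Delta=\delta=1$. Your lower-bound half is complete and correct, and you are right that \cref{lemma: spum lower bound equality a1-2a1} is not needed.

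However, the construction---which you correctly flag as the delicate part---has a genuine gap: both concrete labelings you float fail. The symmetric pairing $\{2n-1+i,\,2n-1-i\}$ for $i=1,\dots,n$ with top label $c=4n-2$ has range $3n-1 < 4n-2$, which already contradicts the lower bound, and indeed it creates spurious edges: for instance $(n-1)+(n+1)=2n \in L$, so endpoints of different pairs become adjacent. Likewise any labeling containing $\{1,2,\dots,2n-1\}$ fails outright, since $1+2=3\in L$ forces extra edges among the small labels. The missing idea is the opposite of your ``densely near $a_1$'' heuristic: you must push all vertex labels \emph{up} so that they are large compared to their spread. The paper takes $L=[2n-1,4n-2]\cup\{6n-3\}$, pairing each $x$ with $(6n-3)-x$. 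Then any two distinct vertex labels sum to at least $(2n-1)+2n=4n-1>4n-2$, so no pairwise sum can land in the vertex block at all, and the only label above the block is $6n-3$, which is hit exactly by the $n$ intended pairs. This yields range $(6n-3)-(2n-1)=4n-2$ and makes the verification a one-line check rather than a case analysis over gaps of $L$.
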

\begin{proof}
As no vertex in $nK_2$ is isolated, clearly we need $\sigma(nK_2) \geq 1$, and $\sigma(nK_2) \leq 1$ due to the labeling $L=[2n-1,4n-2]\cup \{6n-3\}$, where the $2n$ labels in $[2n-1,4n-2]$ are matched up in pairs that sum to $6n-3$, and as $(2n-1)+2n = 4n-1 > 4n-2$, no other edges exist.
Thus $L$ induces $nK_2$ with one additional isolated vertex, so we find $\sigma(nK_2)=1$, and as $\range(L)=(6n-3)-(2n-1)=4n-2$, we have $\spum(nK_2) \leq 4n-2$.
On the other hand, the general lower bound $\spum(G) \geq 2|V|-(\Delta-\delta)-2$ given by \cite[Theorem 2.1]{singla2021some} gives $\spum(nK_2) \geq 4n-2$, as $nK_2$ has $2n$ vertices and all vertices have degree 1.
Thus, $\spum(nK_2) = 4n-2$.
\end{proof}
We now determine $\ispum(nK_2)$ for all values of $n$.
\begin{theorem}\label{theorem: ispum nK2}
For all positive integers $n$, we have $\ispum(nK_2)=\begin{cases} 4 & n=2 \\ 4n-3 & n \neq 2.\end{cases}$
\end{theorem}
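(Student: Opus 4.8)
The plan is to prove $\ispum(nK_2) = 4n-3$ for $n \neq 2$ (handling $n=2$ separately by direct inspection) by establishing matching upper and lower bounds. Recall from the preceding theorem that $\zeta(nK_2) = 0$, so an integral sum graph labeling uses exactly the $2n$ vertices of the matching with no added isolated vertices; this is the crucial simplification that makes $\ispum$ more tractable than $\spum$ here. For the \textbf{lower bound}, I would adapt the general argument behind $\spum(G) \geq 2|V| - (\Delta - \delta) - 2$: since $nK_2$ has $2n$ vertices all of degree $1$, the na\"ive analogue would give $4n - 2$, but with integer labels we gain one unit of savings. The key observation is that in an integral sum graph the label $0$ cannot be used (a vertex labeled $0$ would be adjacent to every other vertex, impossible in a matching), and more importantly the maximum and minimum labels both behave like ``near-isolated'' extremes. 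I would sort the labels and argue that the two extreme labels $\max L$ and $\min L$ each have exactly one neighbor, and that the interval $[\min L, \max L]$ must omit at least the requisite gaps forcing $\range(L) \geq 4n-3$.

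For the \textbf{upper bound}, the goal is an explicit construction achieving $\range(L) = 4n-3$. The natural approach is to exploit negative labels to interleave two arithmetic progressions so that each matched pair sums to a value that lands \emph{outside} the label set, while no unwanted sums land \emph{inside}. A promising ansatz, mirroring the $\spum$ construction $L=[2n-1,4n-2]\cup\{6n-3\}$ but shifted to straddle zero, is to take a symmetric or near-symmetric set such as roughly $[-(2n-1), -1] \cup [?, ?]$, pairing each small negative label with a large positive one so their sum escapes $[\min L, \max L]$. I would write down a candidate set of $2n$ integers spanning a range of exactly $4n-3$, exhibit the explicit perfect matching it induces, and then verify the two containment conditions: every intended pair-sum lies outside $L$ (so those are the only edges, hence each vertex has degree exactly $1$), and no two labels whose sum lies inside $L$ are both present unless they form a matched pair.

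The \textbf{main obstacle} I anticipate is the verification that the constructed labeling induces \emph{exactly} $nK_2$ and nothing more: one must check that no three labels $x, y, z \in L$ satisfy $x + y = z$ except in the prescribed matching pattern, and that no matched pair accidentally creates a second edge. This ``no spurious edges'' check is the technically delicate step, since with integer labels both positive and negative sums can coincide with existing labels in unexpected ways. I would organize this by casework on the signs of the two summands (negative$+$negative, negative$+$positive, positive$+$positive) and bound each sum against the range $[\min L, \max L]$, showing most sums overshoot or undershoot the interval and the remaining finitely many potential collisions are ruled out by the specific arithmetic of the construction. The small cases $n=1$ and especially the exceptional $n=2$ (where the answer drops to $4$) should be checked directly, since the general construction may degenerate; indeed the exceptional value at $n=2$ signals that the generic lower-bound argument has slack for very small $n$ and the construction only achieves $4n-3$ once $n$ is large enough to spread the labels out.
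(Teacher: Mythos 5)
Your proposal has two genuine problems, one fatal to the upper bound and one that leaves the lower bound without its actual content.

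First, the upper bound construction plan inverts the sum-graph adjacency condition. In a sum graph, two vertices are adjacent if and only if the sum of their labels \emph{is} another label in $L$; you propose to pair ``each small negative label with a large positive one so their sum escapes $[\min L, \max L]$'' and then assert ``every intended pair-sum lies outside $L$ (so those are the only edges).'' Sums lying outside $L$ produce \emph{non-edges}, so a labeling built to your specification would induce the empty graph on $2n$ vertices, not $nK_2$. A correct construction must do the opposite: the paper uses $L=\{-1,1,3,\dots,4n-5\}\cup\{4n-4\}$, where every label is odd except the single even label $4n-4$; two odd labels are adjacent exactly when they sum to $4n-4$, which perfectly matches $1,3,\dots,4n-5$ into $n-1$ pairs, and $-1$ is matched to $4n-4$ because their sum $4n-5$ \emph{is} in $L$. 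Your sign-based casework for ruling out spurious edges is the right kind of verification, but as written it is checking the wrong condition throughout.

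Second, the lower bound sketch does not contain the argument that does the work. The general integral bound (the paper's \cref{lemma: iuspum lower bound degrees}, applied to $2n$ vertices of degree $1$) gives only $\ispum(nK_2)\geq 4n-4$, and this value is actually \emph{attained} at $n=2$ by $\{-2,-1,1,2\}$; hence any proof that $\range(L)\geq 4n-3$ must invoke $n\geq 3$ in an essential way, and your sketch never identifies where that happens. The paper's proof is an equality-case analysis: assume $\range(L)=4n-4$, split $L$ at zero into negatives $S_1$ and positives $S_2$, set $S_3=S_1+a_{r+1}$ and $S_4=S_2-a_{r+1}$ where $a_{r+1}$ is the least positive label, and show that equality forces $[\min L,\max L]=S_1\cup S_2\cup S_3\cup S_4$, $a_r=-a_{r+1}$, and $2a_{r+1}\in S_2$; this pins down $[-2a_{r+1},-a_{r+1}]\cup[a_{r+1},2a_{r+1}]\subseteq L$, after which each of the cases $a_{r+1}=n-1$, $1<a_{r+1}<n-1$, and $a_{r+1}=1$ yields a vertex of wrong degree (the contradictions in the latter two cases are exactly where $n\geq 3$ enters). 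Your statement that the extreme labels ``each have exactly one neighbor'' and the interval ``must omit the requisite gaps'' is a restatement of the goal, not a mechanism; without the structural equality analysis (or some replacement for it), the proposal has no path to the one-unit improvement from $4n-4$ to $4n-3$.
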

\begin{proof}
When $n=1$ we recover $K_2$, whose integral spum can easily be seen is 1 using the labeling $L=\{0,1\}$, as first observed by \cite{singla2021some}.
For $n \geq 2$, we lower bound $\ispum(nK_2) \geq 4n-4$ using the general lower bound from \cite[Theorem 2.2]{singla2021some}.
Equality is reached for $n=2$ using the labeling $L=\{-2,-1,1,2\}$.

We now address $n\geq 3$.
We first demonstrate that $\ispum(nK_2) \leq 4n-3$, using the labeling $L=\{-1,1,3,\dots,4n-5\}\cup\{4n-4\}$.
The only edges between two odd labels must sum to the unique even label $4n-4$, so the $2n-2$ labels between 1 and $4n-5$, inclusive, are perfectly matched by those that sum to $4n-4$.
In order to have an edge incident to an even label, i.e., to $4n-4=\max L$, the other incident vertex must be negative, i.e., is $-1$.
So we find $L$ induces $nK_2$, and thus $\ispum(nK_2) \leq \range(L)=4n-3$.

We now closely follow the proof of \cite[Theorem 2.2]{singla2021some} in order to eliminate the equality case of the lower bound, i.e., to show that $\ispum(nK_2) \neq 4n-4$, and thus $\ispum(nK_2)=4n-3$.
Assume for the sake of contradiction that we have a labeling $L$ that achieves $\range(L)=\ispum(nK_2)=4n-4$; enumerate $L=\{a_1,\dots,a_{2n}\}$ in increasing order $a_1 < \cdots < a_{2n}$.
As $0 \not\in L$ and $L$ must contain integers of both signs, define $r$ as the unique index such that $a_r < 0 < a_{r+1}$.
Using $-L$ instead of $L$ if necessary, we may assume $a_{r+1} \leq |a_r| = -a_r$.

Define $S_1=\{a_1,\dots,a_r\}$, $S_2=\{a_{r+1},\dots,a_{2n}\}$, $S_3=S_1+a_{r+1}$, and $S_4 = S_2 - a_{r+1}$.
First, notice that for any $x \in S_1 \cap S_3$, we have $x-a_{r+1}$ is adjacent to $a_{r+1}$.
Similarly, for any $x \in (S_2 \cap S_4) \setminus \{a_{r+1}\}$, we have $x$ is adjacent to $a_{r+1}$.
As the degree of $a_{r+1}$ is 1, we have
\[ |(S_1 \cap S_3) \cup ((S_2 \cap S_4) \setminus \{a_{r+1}\})| = 1,\]
so
\begin{equation}\label{eq: nK2 sharp 1}
    |(S_1 \cap S_3) \cup (S_2 \cap S_4)| \leq 2,
\end{equation}
with equality if and only if $a_{r+1} \in S_2 \cap S_4$, or equivalently $2a_{r+1} \in S_2$.
All four sets $S_1,S_2,S_3$, and $S_4$ are contained in $[a_1,a_n]$, and $S_i \cap S_j = \emptyset$ for all $1 \leq i < j \leq 4$ except possibly $(i,j) \in \{(1,3),(2,4),(3,4)\}$.
As $\max S_3 = a_r + a_{r+1} \leq 0$ and $\min S_4 = 0$, we have
\begin{equation}\label{eq: nK2 sharp 2}
    |S_3 \cap S_4| \leq 1,
\end{equation}
with equality if and only if $a_r = -a_{r+1}$.
Combining \cref{eq: nK2 sharp 1} and \cref{eq: nK2 sharp 2} yields
\begin{equation*}\label{eq: nK2 sharp 3}
    \range(L) \geq \left|\bigcup_{i=1}^4 S_i\right| -1 \geq \sum_{i=1}^4 |S_i| - 3 - 1 = 4n-4,
\end{equation*}
with equality if and only if the following three conditions hold:
first,
\[ [\min L, \max L]=[a_1,a_{2n}]=\bigcup_{i=1}^4 S_i; \]
second, \cref{eq: nK2 sharp 1} must be equality, which occurs if and only if $2a_{r+1} \in S_2$;
and third, \cref{eq: nK2 sharp 2} must be equality, which occurs if and only if $a_r = -a_{r+1}$.

The only set $S_i$ that can contain any integers in $[1,a_{r+1}-1]$ is $S_4$, so $[1,a_{r+1}-1]\subseteq S_4$, and thus $S_2 \supseteq [a_{r+1},2a_{r+1}]$, which uses our previous observation that $2a_{r+1}\in S_2$.
Similarly, the only set $S_i$ that can contain any integers in $[-a_{r+1}+1,-1]$ is $S_3$, so $[-a_{r+1}+1,-1] \subseteq S_3$, and thus $[-2a_{r+1}+1,-a_{r+1}-1] \subseteq S_1$.
We also have $a_r=-a_{r+1}\in S_1$.
Notice that
\begin{equation}\label{eq: nK2 helper 1}
    |(S_1\cap S_3)\cup ((S_2 \cap S_4)+a_{r+1})|=2,
\end{equation}
as the two sets are disjoint and $|S_1\cap S_3| + |S_2\cap S_4|=2$.
As $a_r=-a_{r+1}$ is adjacent to all elements in $((S_1\cap S_3)\setminus\{-a_{r+1}\})\cup ((S_2 \cap S_4)+a_{r+1})$, but the degree of $a_r$ is 1, \cref{eq: nK2 helper 1} means $a_r=-a_{r+1}\in S_1 \cap S_3$, so $-2a_{r+1}\in S_1$.
This yields $[-2a_{r+1},-a_{r+1}] \subseteq S_1$.
We have shown the disjoint sets $S_1\subset L$ and $S_2\subset L$ each contain $a_{r+1}+1$ distinct labels, so $|L|=2n \geq 2a_{r+1}+2$, and thus $a_{r+1} \leq n-1$.
Furthermore, we have found $-a_{r+1}\in S_1 \cap S_3$ and $a_{r+1}\in S_2\cap S_4$, and these two intersections collectively have 2 elements, so we conclude $S_1 \cap S_3 = \{-a_{r+1}\}$ and $S_2 \cap S_4 = \{a_{r+1}\}$.
Thus $a_{r+1}$ is adjacent to $-2a_{r+1}$, and $-a_{r+1}=a_r$ is adjacent to $2a_{r+1}$.
Any element $x \in L \cap [2a_{r+1}+1,3a_{r+1}]$ yields $x-a_{r+1}$ is adjacent to $a_{r+1}$, but as $a_{r+1}$ is already matched, this cannot occur, so $L$ is disjoint from $[2a_{r+1}+1,3a_{r+1}]$.
If $1 < a_{r+1} < n-1$, so that there are labels in $L\setminus([-2a_{r+1},-a_{r+1}]\cup[a_{r+1},2a_{r+1}])$, then without loss of generality suppose this label is positive (as we can use $-L$ instead).
Then $\max L > 3a_{r+1}$, but our previous observation that $[\min L, \max L] = \bigcup_{i=1}^4 S_i$ requires that $3a_{r+1}-1\in [2a_{r+1}+1,3a_{r+1}]$, which is not in $S_1$, $S_2$, or $S_3$, must therefore be in $S_4$, so $4a_{r+1}-1\in L$, and as $2a_{r+1}-1\in [a_{r+1},2a_{r+1}]\subset L$, we find $2a_{r+1}$ is adjacent to $2a_{r+1}-1$, yet $2a_{r+1}$ is already matched to $-a_{r+1}$, which yields a contradiction.

Thus it remains to check $a_{r+1}=n-1$ and $a_{r+1}=1$.
If $a_{r+1}=n-1$, then we've already identified all $2n$ elements of $L$: namely, 
\[ L=[-2a_{r+1},-a_{r+1}]\cup[a_{r+1},2a_{r+1}] = [-2n+2,-n+1]\cup[n-1,2n-2]. \]
As $n \geq 3$, we see that $a_{r+2}=n$ is not adjacent to any vertex, contradicting the assumption that $L$ induces $nK_2$: it cannot be adjacent to any positive label as $n+(n-1) > 2n-2$, and it cannot be adjacent to any negative label as $n+[-2n+2,-n+1]=[-n+2,1]$ is disjoint from $L$.

Finally, we have the case that $a_{r+1}=1$.
We know $[-2,-1]\cup[1,2]\subset L$, which has $2n\geq 6$ elements.
Our previous argument showed $[2a_{r+1}+1,3a_{r+1}]=\{3\}$ is disjoint from $L$, so $3\not\in L$.
But as $a_{r+1} = 1 < n-1$ as $n \geq 3$, our previous argument demonstrates that as 3 is not in $S_1$, $S_2$, or $S_3$, it must be in $S_4$, so $4 \in L$.
But then $-2$ is adjacent to 4, contradicting the fact that $-2$ is already matched with 1.
Hence, we find that assuming $\ispum(nK_2)=4n-4$ yields a contradiction, and thus $\ispum(nK_2)=4n-3$.
\end{proof}

\section{The sum-diameter of a graph}\label{section: sum-diameter}
As integral spum is allowed to use the integers $\Z$ instead of just the positive integers $\Z_+$, we expect $\ispum(G)$ to be lower than $\spum(G)$.
For example, we have
\[\spum(K_3)=6>2=\ispum(K_3)\]
as shown in \cite{singla2021some}.
However, there is no obvious relationship between $\ispum(G)$ and $\spum(G)$, due to $\zeta(G)$ possibly being strictly smaller than $\sigma(G)$, i.e., when the inequality $\zeta(G) \leq \sigma(G)$ is strict.
For example, \cref{theorem: spum C_n} implies $\spum(C_{12}) = 23$, yet $\ispum(C_{12})=25$ as calculated in \cite{singla2021some}, which we independently verified by exhaustive computer search.
In such cases, $\ispum(G)$ is restricted to work with fewer additional isolated vertices, which can make it more difficult to have a lower $\range(L)$.
It would be convenient to remove the restriction that we are simultaneously minimizing the number of additional vertices as well as the range of the labeling, and instead solely minimize the range.
Motivated by this, we introduce the following modification of spum, which we call the sum-diameter.

\begin{definition}\label{definition: uspum}
The \emph{sum-diameter} of a graph $G$, denoted $\uspum(G)$, is the minimum possible value of $\range(L)$ for a set $L$ of positive integer labels, such that the induced sum graph of $L$ consists of the disjoint union of $G$ with any number of isolated vertices.
\end{definition}
The difference between the definition of sum-diameter and spum is that in spum, one is required to use the minimum number of additional isolated vertices possible.

Notice that the sum-diameter is not related to the distance along edges of a graph, though the terminology is consistent with the diameter of a set in a metric space, if one considers the set of labels $L$ to be inside a metric space.

We similarly define the \emph{integral sum-diameter} of a graph $G$, denoted $\iuspum(G)$, by allowing the labels to be arbitrary distinct integers.

Clearly $\uspum(G) \leq \spum(G)$, as any set of labels $L$ from the definition of $\spum(G)$, i.e., on $n+\sigma(G)$ vertices, is a valid labeling for the definition of $\uspum(G)$.
Similarly, $\iuspum(G) \leq \ispum(G)$.
Finally, as desired, any labeling $L$ from the definition of $\uspum(G)$ is a valid labeling for the definition of $\iuspum(G)$, which yields the following proposition.

\begin{proposition}\label{lemma: iuspum <= uspum}
For all graphs $G$, $\iuspum(G) \leq \uspum(G)$.
\end{proposition}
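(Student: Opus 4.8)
The plan is to show the inequality $\iuspum(G) \leq \uspum(G)$ by a direct containment argument at the level of label sets, exactly parallel to the reasoning already used in the excerpt for $\uspum(G) \leq \spum(G)$. The key observation is that the definitions of sum-diameter and integral sum-diameter are identical except that sum-diameter restricts the labels to lie in $\Z_+$, whereas integral sum-diameter permits arbitrary distinct integers in $\Z$. Since $\Z_+ \subset \Z$, any labeling admissible for $\uspum(G)$ is also admissible for $\iuspum(G)$, and enlarging the feasible set can only decrease (or leave unchanged) the minimum of $\range(L)$.

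First I would take any set $L \subset \Z_+$ of positive integer labels whose induced sum graph consists of $G$ together with some number of isolated vertices, achieving $\range(L) = \uspum(G)$; such a labeling exists by \cref{definition: uspum}. Next I would observe that because $L \subset \Z_+ \subseteq \Z$, this same set $L$ is a valid labeling in the sense of \cref{definition: uspum}'s integral analogue: it is a set of distinct integers whose induced sum graph is $G$ plus isolated vertices. Crucially, the induced sum graph is defined identically over $\Z$ as over $\Z_+$ (adjacency iff the sum of two labels is itself a label), so passing to the integers does not alter which edges are induced by $L$. Therefore $L$ is feasible for $\iuspum(G)$, and since $\iuspum(G)$ is the minimum of $\range$ over a feasible set that contains $L$, we conclude $\iuspum(G) \leq \range(L) = \uspum(G)$.

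There is essentially no obstacle here: the result is a one-line consequence of the feasible set for $\iuspum$ containing the feasible set for $\uspum$, combined with the fact that taking a minimum over a larger set can only decrease it. The only point requiring a moment's care is confirming that the induced sum graph of a positive-integer set is the same object whether we view the adjacency condition inside $\Z_+$ or inside $\Z$; this holds because the defining condition $u+v \in L$ refers only to membership in $L$ itself, and all relevant sums and labels already lie in $\Z_+$. I would present this cleanly and briefly rather than belaboring it, since the entire content is the monotonicity of an infimum under enlargement of its domain.
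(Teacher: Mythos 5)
Your proof is correct and is exactly the paper's argument: any labeling $L \subset \Z_+$ feasible for $\uspum(G)$ is, since $\Z_+ \subseteq \Z$, also feasible for $\iuspum(G)$ with the same induced sum graph, so the minimum defining $\iuspum(G)$ is at most $\range(L) = \uspum(G)$. The paper states this in a single sentence immediately before the proposition; your write-up just spells out the same containment-of-feasible-sets observation in more detail.
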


These relations can thus be visually represented by \cref{fig:spum_relations}.
\begin{figure}[htbp]
    \centering
    \begin{tikzpicture}[scale=1,baseline,thick]
        
        \node at (0,0) {isd};
        \node at (2.5,0) {sd};
        \node at (0,2.5) {ispum};
        \node at (2.5,2.5) {spum};
        
        \node at (1.25,0) {$\leq$};
        \node[rotate=90] at (0,1.25) {$\leq$};
        \node[rotate=90] at (2.5,1.25) {$\leq$};
    \end{tikzpicture}
    \caption{The relationships between (integral) spum and (integral) sum-diameter.}
    \label{fig:spum_relations}
\end{figure}
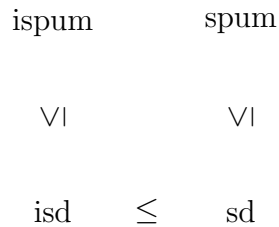

\begin{example}\label{example: iuspum < ispum}
Our previous observation of $\spum(C_{12}) = 23$ while $\ispum(C_{12})=25$ implies that $\iuspum(C_{12}) \leq 23 < \ispum(C_{12}) = 25$, so the leftmost relationship of \cref{fig:spum_relations} can be strict.
This means that adding more isolated vertices than minimally necessary can decrease $\range(L)$ for labelings over the integers $\Z$.
In general, any situation where $\spum(G)<\ispum(G)$ yields $\iuspum(G) < \ispum(G)$.
\end{example}
\begin{example}\label{example: iuspum < uspum}
The middle relationship of \cref{fig:spum_relations} can also be strict.
Take for example $K_2$ and $K_3$; in \cref{lemma: spums equal for K_n 2-3} we prove $\iuspum(K_2)=1<\uspum(K_2)=2$ and $\iuspum(K_3)=2<\uspum(K_3)=6$.
Thus, allowing nonpositive integers can decrease $\range(L)$.
\end{example}
\begin{example}\label{example: uspum < spum}
Lastly, the rightmost relationship of \cref{fig:spum_relations} can also be strict.
For example, consider the path graph with 9 vertices $P_9$.
\cref{tab:spum P_n} shows $\spum(P_9)=19$, yet $\uspum(P_9) \leq 17$ as the labeling $L=[9,17]\cup [25,26]$ induces the sum graph consisting of a path 13, 12, 14, 11, 15, 10, 16, 9, 17, with 25 and 26 as isolated vertices, and $\range(L)=17$.
Hence, adding more isolated vertices than minimally necessary can decrease $\range(L)$ for labelings over the positive integers $\Z_+$.
\end{example}

\begin{remark}\label{remark: a1 = min L sd}
Our observation in \cref{remark: a1 = min L spum} regarding spum also applies to sum-diameter.
If $L$ is an optimal sum graph labeling for a graph $G$ without isolated vertices, i.e., $\range(L)=\sd(G)$, and $S\subset L$ are the labels corresponding to the vertices of $G$, then $\min S = \min L$, as otherwise removing the label $\min L$ does not change the fact that this labeling induces $G$, while strictly decreasing its range, contradicting the minimality of $\range(L)=\sd(G)$.
\end{remark}

The following result was initially stated for $\spum(G)$ rather than $\sd(G)$, but we note that the same proof works to obtain the following stronger result.
\begin{theorem}[\protect{\cite[Theorem 2.1]{singla2021some}}]\label{lemma: uspum lower bound degrees}
For graphs $G$ of order $n$ without any isolated vertices, with maximum and minimum vertex degree $\Delta$ and $\delta$, respectively, we have
\begin{align*}
    \uspum(G) \geq 2n-(\Delta-\delta)-2.
\end{align*}
\end{theorem}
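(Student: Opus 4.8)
The plan is to treat this as exactly \cite[Theorem 2.1]{singla2021some} restated with $\sd$ in place of $\spum$, and to verify that the argument given there goes through unchanged; the genuine content of the statement is the observation that the spum lower bound never uses the minimality of the number of isolated vertices. So I would begin by fixing any labeling $L$ with $\range(L)=\sd(G)$ whose induced sum graph is $G$ together with some (arbitrary) collection of isolated vertices, and let $S=\{a_1<\cdots<a_n\}$ be the labels carried by the vertices of $G$. By \cref{remark: a1 = min L sd} we have $a_1=\min S=\min L$, and every subsequent step will refer only to $S$, $\min L$, and $\max L$.

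For the mechanism I would reprise the partition used in the proof of \cref{lemma: spum lower bound equality a1-2a1}: write $S_1=S\cap[a_1,2a_1]$, $S_2=S\setminus S_1$, $S_3=S_2-a_1\subseteq[a_1+1,a_n-a_1]$, and $T=[a_1,a_n]\setminus S$. For each $x\in S_3\cap S$ the label $a_1+x\in S_2\subseteq L$ exhibits $x$ as a neighbor of $a_1$, and since these $x$ all exceed $a_1$ there is no self-adjacency to worry about, so $|S_3\cap S|\le\deg(a_1)\le\Delta$. Counting the remaining $|S_3|=|S_2|=n-|S_1|$ elements of $S_3$ as gaps $T$ inside $[a_1,a_n]$ then forces $a_n-a_1$ to be large; at the top, the vertex $a_n$ has at least $\delta$ neighbors, each contributing a distinct label exceeding $a_n$, so $\max L\ge a_n+\delta$. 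Combining the two endpoint estimates is intended to produce $\range(L)=\max L-a_1\ge 2n-(\Delta-\delta)-2$.

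The main obstacle is twofold, and both parts are already resolved in the cited work rather than being new. First, tracking the constant exactly requires controlling $|S_1|=|S\cap[a_1,2a_1]|$: the naive bound $|S_1|\le a_1+1$ is wasteful precisely when $a_1$ is large and the bottom block $[a_1,2a_1]$ lies entirely in $S$, so the argument splits on whether this block is full and treats the full case through the extra adjacencies it generates — this is exactly the boundary bookkeeping that \cref{lemma: spum lower bound equality a1-2a1} is organized around. Second, and this is the actual point to be verified, one must confirm that none of these steps silently invokes $\sigma(G)$, the identity of the isolated vertices, or the requirement that their number be minimal: the argument uses only that adjacency in $L$ is governed by label sums, that $\min L$ is attained on a vertex of $G$ (\cref{remark: a1 = min L sd}), and the vertex degrees of $G$. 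Since all three are unaffected by padding $G$ with additional isolated vertices, the bound holds over the full class of labelings defining $\sd(G)$, and I expect no new idea beyond this verification.
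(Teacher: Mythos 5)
Your proposal coincides with the paper's own treatment: the paper gives no independent argument for this theorem, stating only that the proof of \cite[Theorem 2.1]{singla2021some} never invokes the minimality of the number of added isolated vertices and therefore applies verbatim to every labeling admissible in the definition of $\sd(G)$ --- exactly your central observation, down to the three invariances you list (sum-governed adjacency, $\min L$ attained on a vertex of $G$ as in \cref{remark: a1 = min L sd}, and the degrees of $G$). One caveat on your sketch of the cited mechanism: the estimate $\max L \ge a_n + \delta$ is too weak, since together with $a_n \ge 2n-\Delta-2$ it only yields $\range(L) = \max L - a_1 \ge 2n-(\Delta-\delta)-2-a_1$; the cited proof instead needs $\max L \ge a_n + a_\delta$ (or $a_n + a_{\delta+1}$ when $a_1$ cannot be adjacent to $a_n$), together with a case split on whether $|S_3\cap S|=\Delta$, exactly as mirrored in \cref{lemma: spum lower bound equality a1-2a1} --- but since you explicitly defer these details to the cited work, as the paper itself does, this imprecision does not affect the validity of your proposal.
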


\begin{remark}\label{remark: sd lower bound equality a1-2a1}
Moreover, our refinement in \cref{lemma: spum lower bound equality a1-2a1} of the equality case of this result also generalizes to the equality case $\sd(G)=2n-(\Delta-\delta)-2$, with the same proof.
\end{remark}

Similarly, the proof of \cite[Theorem 2.2]{singla2021some} does not use the assumption that exactly $\zeta(G)$ isolated vertices are present when bounding $\ispum(G)$, and thus the following strengthening holds.
\begin{theorem}[\protect{\cite[Theorem 2.2]{singla2021some}}]\label{lemma: iuspum lower bound degrees}
For graphs $G$ of order $n$ without any isolated vertices, with maximum degree $\Delta$, we have
\begin{align*}
    \iuspum(G) \geq 2n - \Delta - 3.
\end{align*}
\end{theorem}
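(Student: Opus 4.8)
For a graph $G$ of order $n$ without isolated vertices and maximum degree $\Delta$, we have $\iuspum(G) \geq 2n - \Delta - 3$.

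The plan is to adapt the counting argument underlying \cref{lemma: uspum lower bound degrees}, but carried out over $\Z$ rather than $\Z_+$, exactly as in the cited \cite[Theorem 2.2]{singla2021some}. Since the theorem statement explicitly notes that the original proof never uses the minimality of the number of isolated vertices, the entire content is to verify that the bounding argument goes through verbatim when we drop the $\zeta(G)$ restriction. Let $L \subset \Z$ be a labeling achieving $\range(L) = \iuspum(G)$, let $S \subset L$ be the labels of the $n$ vertices of $G$, and pick a vertex $v$ realizing the maximum degree $\Delta$. The core idea is to locate a vertex whose label is extreme (say $a = \max S$ or $a = \min S$, after possibly replacing $L$ by $-L$ to fix signs) and to show that the set of labels forced to lie in $[\min L, \max L]$ must be large.

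First I would normalize: because negating all labels preserves the induced sum graph and the range, I may assume the configuration is oriented so that the vertex of maximum label behaves conveniently. Next, the key structural step is to observe that for the vertex $a$ of largest label in $S$, its neighbors $w$ satisfy $a + w \in L$, and since $a$ is (near-)maximal these sums are forced into a controlled interval, contributing distinct labels to $L$. Simultaneously, the $n$ vertex-labels of $S$ themselves occupy $n$ distinct positions in $[\min L, \max L]$. The counting then combines these two families of labels — the $n$ vertices together with the sum-labels witnessing adjacencies — while subtracting off the at most $\Delta + O(1)$ overlaps that arise because the neighbors of the extreme vertex can coincide with existing vertex-labels. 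Assembling these disjointness and overlap bounds yields $\range(L) \geq 2n - \Delta - 3$.

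The main obstacle is bookkeeping the overlaps precisely: one must argue that among the $n$ vertex labels and the adjacency-sum labels, the only possible coincidences are accounted for by the $\Delta$ edges at the chosen vertex plus a small additive constant (the $-3$), and that passing to $\Z$ introduces no new coincidences beyond those already present in the positive-integer case. Concretely, over $\Z$ the danger is that the sign-separation of labels (positive versus negative parts of $L$) could create extra collisions between $S$ and the shifted sets; the resolution is the same sign-normalization used in \cref{theorem: ispum nK2}, which fixes $\min L$ and $\max L$ so that the extreme vertex's neighbor-sums land outside $S$ except in the controlled cases. Since \cite[Theorem 2.2]{singla2021some} already carried out this argument and the theorem asserts the proof applies unchanged, the only real work is checking that no step invoked the count of isolated vertices — and indeed it does not, since the bound depends solely on $n$, $\Delta$, and the range of an arbitrary valid labeling.
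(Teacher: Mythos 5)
Your proposal is correct and takes essentially the same route as the paper: the paper's entire proof of this theorem is the single observation that the argument of \cite[Theorem 2.2]{singla2021some} never uses the assumption that exactly $\zeta(G)$ isolated vertices are present, so it bounds $\range(L)$ for an arbitrary valid integral labeling and hence bounds $\isd(G)$, which is precisely your central point. A minor caveat: the cited argument does not proceed via neighbor-sums of the maximal label as your sketch suggests, but rather translates the negative and positive parts of the vertex label set by the smallest positive label and bounds the resulting overlaps by $\Delta+2$ (compare the paper's proof of \cref{theorem: ispum nK2}, which closely follows it); since both you and the paper defer to the citation for these internal details, this does not affect the verdict.
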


As noted in \cite{singla2021some}, these two inequalities stated originally in terms of $\spum(G)$ and $\ispum(G)$ are sharp, and as $\sd(G) \leq \spum(G)$ and $\isd(G) \leq \ispum(G)$ for all graphs $G$, we find the same constructions imply that the two bounds of \cref{lemma: uspum lower bound degrees} and \cref{lemma: iuspum lower bound degrees} are sharp.

At the time of writing, no general upper bounds are known for $\spum(G)$ or $\ispum(G)$, i.e., there are no known upper bounds that hold for all graphs.
The primary difficulty in creating such a bound is that the sum number $\sigma(G)$ and integral sum number $\zeta(G)$ are not known for arbitrary graphs.

One way in which the sum-diameter is arguably a better graph property to study is that simple upper bounds on $\sd(G)$ and $\isd(G)$ can be obtained for arbitrary graphs $G$, as \cref{theorem: sd general upper bound} shows.
Before we present the theorem, however, we first define the notion of a Sidon set.

\begin{definition}
A set $S$ of positive integers is a \emph{Sidon set} if whenever $a,b,c,d\in S$ satisfy $a+b=c+d$, we have $\{a,b\}= \{c,d\}$.
\end{definition}

We use Sidon sets to prove the following result.

\begin{theorem}\label{theorem: sd general upper bound}
Let $G$ be a graph with $n$ vertices.
Then $\sd(G) \leq 64n^2 - 64n + 9$.
\end{theorem}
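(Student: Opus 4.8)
The plan is to construct, for an arbitrary graph $G$ on $n$ vertices, an explicit labeling $L\subset\Z_+$ whose induced sum graph is $G$ together with some isolated vertices, and whose range is $O(n^2)$. The core idea is to use a Sidon set to encode the vertices so that sums are ``almost injective,'' and then to realize each desired edge $(u,v)$ by placing an extra label equal to the sum of the labels of $u$ and $v$, while ensuring no unintended sums land inside $L$. Concretely, I would first assign to the $n$ vertices labels $x_1<\cdots<x_n$ coming from a Sidon set $S\subset[1,N]$ for a suitable $N=O(n^2)$; such a set exists with $N$ quadratic in $n$ (e.g. via the classical Erd\H{o}s--Tur\'an / Singer construction giving a Sidon set of size $n$ inside an interval of length $O(n^2)$). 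The Sidon property guarantees that the pairwise sums $x_i+x_j$ are all distinct, which is what prevents one ``edge label'' from accidentally certifying a second unwanted edge.

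Next I would introduce a second block of labels, lying strictly above all vertex labels, to serve as the ``sum certificates.'' For each edge $(v_i,v_j)\in E(G)$ we want the value $x_i+x_j$ to appear as a label, and for each non-edge we want $x_i+x_j$ to be absent from $L$. Because $S$ is Sidon, the values $\{x_i+x_j:(v_i,v_j)\in E\}$ are distinct, so we may simply adjoin exactly this set of sums as new isolated vertices; each such certificate vertex is isolated provided its own sums with everything else fall outside $L$. The main bookkeeping is to verify the two ``no spurious edge'' conditions: (a) no sum of two vertex labels equals a third vertex label (so vertex--vertex adjacency is governed solely by the certificate block), which follows from arranging $2x_1>x_n$, i.e. packing the vertex labels into a short high interval so that any pairwise sum already exceeds every vertex label; and (b) no sum involving a certificate label (certificate+vertex or certificate+certificate) lands back in $L$, which follows by placing the certificate block high enough and checking the relevant magnitude inequalities. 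Scaling/shifting the Sidon set so that $\min S$ is itself on the order of $N$ makes condition (a) automatic while keeping $\range(L)=O(n^2)$.

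The final step is to total up the range and push the constant down to the claimed $64n^2-64n+9$. I expect the main obstacle to be precisely this constant optimization rather than the existence of the construction: one must choose the specific Sidon set (and the exact placement of the vertex block and the certificate block) tightly enough that the largest certificate, $\max_i x_i+\max_j x_j$, minus $\min_i x_i$, comes out below the stated bound. A clean way to get an explicit constant is to avoid deep Sidon constructions and instead use the simple perfect-difference-free / greedy Sidon set $\{1,2,4,8,\dots\}$-style powers is too large, so I would instead use the quadratic Sidon set $\{2kn+ (k^2\bmod n)\}$ type construction, or more elementarily the set $\{(2n)i + i^2\}$-style labels, whose pairwise sums are manifestly distinct and whose maximum is a small explicit multiple of $n^2$; tracking the arithmetic of $\max L-\min L$ through this explicit family should land at $64n^2-64n+9$. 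Thus the hard part is quantitative: proving the explicit family is Sidon, confirming the separation inequalities (a) and (b) hold for all $n$, and verifying the arithmetic gives exactly the stated coefficient.
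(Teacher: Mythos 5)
Your proposal follows the same core strategy as the paper's proof: label the vertices with an $n$-element Sidon set (obtained from the Erd\H{o}s--Tur\'an construction via Bertrand's postulate), adjoin the sum $x_i+x_j$ as a new isolated-vertex label for each edge $(v_i,v_j)$, and use the Sidon property to ensure each certificate witnesses only its intended edge. Where you genuinely differ is the mechanism preventing spurious adjacencies. The paper uses a congruence trick: vertex labels are $4s_i+1\equiv 1\pmod 4$ and edge labels are $4s_i+4s_j+2\equiv 2\pmod 4$, so vertex-plus-edge sums ($\equiv 3$) and edge-plus-edge sums ($\equiv 0$) can never lie in $L$, and only vertex-plus-vertex sums need the Sidon property. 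You instead shift the Sidon set upward so that magnitude inequalities do the work: $2\min S>\max S$ rules out a vertex sum equaling a vertex label, and the sharper inequality $3\min S>2\max S$ (which is exactly what ``certificate plus vertex exceeds $\max L$'' requires, and is the inequality you should state explicitly, since the certificate block cannot be placed independently --- its location is forced by the vertex labels) rules out everything involving a certificate. Both mechanisms work; yours, carried out with the Erd\H{o}s--Tur\'an set in $[1,2p^2]$, $p<2n$, shifted up by roughly $2\cdot 2p^2$, gives range at most about $8(2n-1)^2\approx 32n^2$, which is in fact better than the paper's $64n^2-64n+9$ (the paper pays a factor for multiplying all labels by $4$). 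One concrete flaw to fix: the explicit set $\bigl\{(2n)i+i^2\bigr\}$ you float at the end is not (obviously) Sidon --- its pairwise sums have the form $2n(i+j)+(i^2+j^2)$, and since $i^2+j^2$ can exceed $2n$, collisions across different values of $i+j$ are not excluded by that decomposition. Drop that suggestion and stick with the Erd\H{o}s--Tur\'an/Singer construction you cite earlier; with it, the separation inequalities and the arithmetic close, and the stated bound follows a fortiori.
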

\begin{proof}
For $n=1$, we have $\sd(G)=0$ so the bound holds.

For $n \geq 2$, we provide a general construction for a set of labels $L$ that induces $G=(V,E)$ along with $|E|$ number of isolated vertices, where $|E|$ is the number of edges in $G$.
As $n \geq 2$, we have by the well-known Bertrand's postulate, first proven by Chebyshev, that there exists an odd prime $p$ such that $n \leq p < 2n$ (see, for example, Derbyshire \cite[p. 124]{derbyshire2003prime}).
Erd\H{o}s and Tur\'an \cite{erdos1941problem} showed that there exists a Sidon set with $p$ elements contained in $[1,2p^2]$.
Pick an arbitrary $n$-element subset of such a Sidon set, enumerating them $s_1$ through $s_n$ in increasing order.
Clearly $S=\{4s_i + 1 \mid 1 \leq i \leq n\}$ is also a Sidon set itself.
Enumerating the $n$ vertices 1 through $n$, we label each vertex $i$ with $4s_i + 1$.
For each edge $(i,j) \in E$, add the label $4s_i + 4s_j + 2$ to $L$.
We claim $L$, which has size $|L|=|V|+|E|$, induces $G$ along with $|E|$ isolated vertices.

Our labeling $L$ has ``vertex-type" labels that are all congruent to 1 mod 4, and ``edge-type" labels that are all congruent to 2 mod 4.
Thus, by simply considering their sums modulo 4, the only edges that $L$ can induce are between vertex-type labels.
So it suffices to show that $L$ induces an edge between labels $4s_i+1$ and $4s_j+1$ if and only if $(i,j) \in E$.
By construction, if $(i,j)\in E$, then $4s_i + 4s_j + 2\in L$, so there is an edge between labels $4s_i + 1$ and $4s_j+1$.
For the other direction, suppose $L$ induces an edge between labels $4s_i+1$ and $4s_j+1$ for $i \neq j$, or in other words $4s_i + 4_j + 2 \in L$.
Then as $4s_i + 4s_j + 2 \equiv 2 \pmod{4}$, this is an edge-type label, and was constructed for some edge $(u,v)\in E$ for $u \neq v$.
So
\[ (4s_i + 1) + (4s_j + 1) = (4s_u + 1) + (4s_v + 1),\]
but as $S$ is a Sidon set this implies $\{i,j\} = \{u,v\}$.
Hence $(i,j) \in E$.

Thus $L$ correctly induces $G$ along with $|E|$ isolated vertices.
We observe that
\[ \max L \leq 4s_{n-1} + 4s_n + 2 \leq 8\cdot2p^2 - 2 \leq 16(2n-1)^2 - 2 \]
and $\min L = 4s_1 + 1 \geq 5$, so
\[\sd(G) \leq \range(L) \leq 64n^2 - 64n + 9,\]
as desired.
\end{proof}
\begin{remark}\label{remark: sd general upper bound implies sigma bound}
This result also trivially bounds $\sigma(G) \leq |E|$ by the number of edges of $G$, which redemonstrates a result by \cite{harary1990sum}, though the construction provided in that paper would yield a worse bound for $\sd(G)$, namely a bound exponential in $n$.

Improvements can certainly be made to the coefficient of the quadratic term; for example, much stronger results are known than simply Bertrand's postulate.
Alternatively, more efficient Sidon sets can be used, for example using Singer's construction \cite{singer1938theorem}.
\end{remark}

Using \cref{lemma: iuspum <= uspum}, we have that \cref{theorem: sd general upper bound} implies $\isd(G) \leq 64n^2-64n+9$ as well.

We now demonstrate that this bound is tight up to a constant factor, i.e., an upper bound asymptotically lower than quadratic is impossible.

\begin{theorem}\label{theorem: existence quadratic sd}
Let $f(n)$ be defined as the maximum value of $\sd(G)$ over all graphs $G$ with $n$ vertices.
Then $f(n) \geq \frac{n^2}{4}-O(n\log n) = \Omega(n^2)$.
\end{theorem}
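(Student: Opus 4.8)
The plan is to establish the lower bound $f(n) \geq \frac{n^2}{4} - O(n \log n)$ by exhibiting, for each $n$, a specific graph $G$ on $n$ vertices whose sum-diameter is forced to be at least quadratic. The natural strategy is a counting argument: if a labeling $L$ induces $G$ together with some isolated vertices, then every edge of $G$ corresponds to a distinct sum $u+v$ that must itself be a label in $L$. By the lower bound $\sd(G) \geq \spum$-type bounds already available, a small range forces all labels to live in a short interval $[m, m + \range(L)]$, which severely limits how many distinct pairwise sums can appear. The idea is that a graph with roughly $\frac{n^2}{4}$ edges requires roughly that many distinct sum-values to serve as its edge-witnesses, and if the label set has small range, there simply are not enough available integers in the interval to accommodate all these required sums plus the vertex labels themselves. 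So the first step is to select a graph with $\Theta(n^2)$ edges—the complete bipartite graph $K_{n/2,n/2}$ being the most natural candidate, as it has $\frac{n^2}{4}$ edges.

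The key steps I would carry out are as follows. First, I would fix $G$ to be a dense graph on $n$ vertices with $\Theta(n^2)$ edges; rather than relying on a structured graph, it may be cleaner to invoke a probabilistic or counting argument that a \emph{random} graph on $n$ vertices has the required property, since we only need the \emph{existence} of one such $G$ (as $f(n)$ is a maximum). Second, suppose for contradiction that $\sd(G) = r$ with $r$ small; then there is a label set $L \subset \Z_+$ with $\range(L) = r$ inducing $G$ plus isolated vertices. The $n$ vertex-labels of $G$ all lie in an interval of length $r$, and each of the $|E|$ edges $(u,v)$ forces the sum $u+v$ to be a label in $L$, hence also in an interval of length at most $2r$. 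Third, I would bound the number of \emph{distinct} sums achievable: since sums of pairs from an interval of length $r$ lie in an interval of length $2r$, there are at most $2r + 1$ possible distinct values for these edge-witnesses. But distinct edges need not produce distinct sums, so the real constraint comes from a more refined encoding—counting how many graphs on $n$ vertices can arise from label sets of range at most $r$, and comparing against the total number $2^{\binom{n}{2}}$ of graphs.

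The cleanest route, and the one I expect the author to take, is an \textbf{entropy/counting} argument rather than a direct edge-counting one. The plan is: the number of distinct graphs $G$ that can be realized as $\sd(G) \leq r$ is at most the number of label sets $L \subset \Z_+$ with $\range(L) \leq r$ that one needs to consider, up to translation. A label set realizing such a graph is (after translating $\min L$ to a bounded value) a subset of $[1, r+1]$, so there are at most $2^{r+1}$ such sets, hence at most $2^{r+1}$ graphs with $\sd(G) \leq r$. Since there are $2^{\binom{n}{2}}$ graphs on $n$ labeled vertices (and isomorphism classes number at least $2^{\binom{n}{2}}/n!$), if every graph had $\sd(G) \leq r$ we would need $2^{r+1} \geq 2^{\binom{n}{2}}/n!$, giving $r + 1 \geq \binom{n}{2} - \log_2(n!) = \frac{n^2}{2} - O(n \log n)$. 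This shows \emph{some} graph has sum-diameter at least $\frac{n^2}{4} - O(n\log n)$ after adjusting constants (the factor discrepancy between $\frac12$ and $\frac14$ comes from the fact that labels plus edge-labels together occupy the interval, roughly doubling the effective range needed).

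The main obstacle I anticipate is making the counting bound on the number of realizable graphs genuinely tight enough, and in particular correctly accounting for the isolated vertices: the label set $L$ realizing $G$ includes the edge-witness labels and extra isolated vertices, so $|L|$ can be as large as $n + |E| = \Theta(n^2)$, and these labels span the full interval of length $r$. The subtlety is that a single label set $L$ of range $r$ determines its induced sum graph uniquely, and we must extract from it the \emph{induced subgraph on $n$ of its vertices} (deleting isolated vertices), so the map from label sets to graphs-on-$n$-vertices is well-defined; the count $2^{r+1}$ of subsets of $[1,r+1]$ (after normalizing $\min L = 1$, valid by \cref{remark: a1 = min L sd} up to translation) then upper-bounds the number of graphs achievable, and the pigeonhole comparison against $2^{\binom{n}{2}}/n!$ closes the argument. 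Getting the logarithmic error term and the leading constant exactly right—reconciling the $\frac{n^2}{2}$ from naive counting with the claimed $\frac{n^2}{4}$—is the delicate bookkeeping step, likely handled by observing that we may restrict attention to graphs on $n$ vertices whose labelings use at most $2r$ relevant labels, or by a slightly more careful encoding of which subsets of $[1,r+1]$ actually correspond to valid inducing sets.
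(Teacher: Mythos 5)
Your overall strategy---comparing the number of graphs on $n$ vertices against the number of label sets of bounded range via pigeonhole---is exactly the paper's argument, and your instinct to abandon the dense-graph/edge-counting route in favor of this counting argument was correct. However, there is a genuine gap in the step where you normalize the label sets: you claim that a labeling realizing $\sd(G) \leq r$ may, ``after translating $\min L$ to a bounded value,'' be taken to be a subset of $[1, r+1]$, so that at most $2^{r+1}$ graphs are realizable. This fails because the induced sum graph is \emph{not} translation-invariant: if $u+v=w$ witnesses an edge in $L$, then in $L+c$ the corresponding sum is $(u+c)+(v+c) = w+2c \neq w+c$, so $L+c$ generally induces a completely different graph. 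For instance, $\{1,2,3\}$ induces $K_2$ plus an isolated vertex, while $\{2,3,4\}$ induces the empty graph on three vertices. \cref{remark: a1 = min L sd}, which you cite, only says $\min S = \min L$; it licenses no translation. Consequently your count of $2^{r+1}$ label sets, and the resulting bound $r \geq \binom{n}{2} - \log_2(n!) = \frac{n^2}{2} - O(n\log n)$, are not justified.

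The paper closes this gap not by translating but by bounding $\min L$ in absolute terms. Writing $S = \{a_1 < \cdots < a_n\}$ for the vertex labels of $G$, one has $a_1 = \min L$ (by the removal argument you cite), and since $G$ is taken to be connected, $a_1$ is adjacent to some $a_i$, so $a_1 + \sd(G) = \max L \geq a_1 + a_i \geq 2a_1 + 1$, forcing $a_1 \leq \sd(G) - 1 \leq f(n)-1$ and hence $\max L \leq 2f(n)$. Every optimal labeling is therefore literally a subset of $[1, 2f(n)]$---no normalization needed---and there are at most $2^{2f(n)}$ such sets. Pigeonholing against the $(1-o(1))2^{\binom{n}{2}}/n!$ unlabeled connected graphs (connectivity, or at least the absence of isolated vertices, is what makes the map $G \mapsto L$ injective, since $G$ is recovered from $L$ as the non-isolated part of its induced sum graph) gives $2f(n) \geq \binom{n}{2} - \log_2(n!)$, i.e.\ $f(n) \geq \frac{n^2}{4} - O(n\log n)$. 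Your closing hedge about restricting to labelings that ``use at most $2r$ relevant labels'' gestures at this fix, but the missing ingredient is the adjacency argument that actually forces $\max L \leq 2f(n)$. In particular, the factor $\frac{1}{4}$ in the theorem is not delicate bookkeeping to be reconciled with a true $\frac{1}{2}$; it is exactly the factor of $2$ in $\max L \leq 2f(n)$ that your translation step erased.
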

\begin{proof}
Consider the set of unlabeled connected graphs $G$ with $n$ vertices.
The cardinality of this set is easily seen to be at least $(1-o(1))2^{\binom{n}{2}}/n!$.
For each unlabeled connected graph $G$, consider a labeling $L$ that induces $G$ while achieving $\range(L)=\sd(G)$.
The labelings for all of the unlabeled connected graphs must be pairwise distinct.
All of these labelings satisfy $\range(L) \leq f(n)$, and induce a connected graph $G$ with $n$ vertices, along with some additional isolated vertices.
We claim there are at most $2^{2f(n)}$ such labelings.
Consider the set of labels $S\subset L$ of the $n$ vertices of $G$, and enumerate them $a_1,\dots,a_n$.
Notice that $a_1 = \min S = \min L$, as otherwise labels below $a_1$ can be removed while still inducing $G$, and strictly decreasing $\range(L)$, contradicting the minimality of $\sd(G)$.
Then $a_1$ must be adjacent to some label, meaning $a_1+a_i \in L$ for some $i$, and thus
\[ a_1 + \sd(G) = \max L \geq a_1+a_i \geq a_1 + a_2 \geq 2a_1 + 1,\]
so $a_1 \leq \sd(G) - 1$.
Thus
\[ \max L = a_1 + \sd(G) \leq 2\sd(G) - 1 \leq 2f(n).\]
There are only $2^{2f(n)}$ sets of positive integers such that the maximum element is at most $2f(n)$, and as each distinct connected graph with $n$ vertices must have a distinct such labeling, we find $2^{2f(n)} \geq (1-o(1))2^{\binom{n}{2}}/n!$, so
\[ 2f(n) \geq \binom{n}{2}-\log_2(n!) = \frac{n^2}{2} - O(n\log n) \]
using Stirling's approximation, and thus we have $f(n) \geq \frac{n^2}{4}-O(n\log n)$, as desired.
\end{proof}

\begin{remark}\label{remark: most graphs quadratic sd}
In fact, the proof of \cref{theorem: existence quadratic sd} implies that not only does some graph have $\Omega(n^2)$ sum-diameter, but for sufficiently large $n$, nearly all, i.e., $1-o(1)$ proportion, of the graphs on $n$ vertices have $\Omega(n^2)$ diameter.
This follows from the observation that only $o(1)$ proportion of the graphs can have $o(n^2)$ sum-diameter.
This sharply contrasts the current knowledge of bounds on the sum-diameter, upper bounded by the spum, of special families of graphs, e.g. $K_n$, $C_n$, $K_{1,n}$, $K_{n,n}$, and $nK_2$, all of which have been upper bounded linearly.
\end{remark}

\section{Sum-diameter of complete graphs $K_n$}\label{section: uspum K_n}

We find that the (integral) sum-diameters have the same values as the (integral) spum for complete graphs, i.e., $\spum(K_n)=\uspum(K_n)$ for all $n \geq 2$ and $\ispum(K_n)=\iuspum(K_n)$ for all $n \geq 2$.
\begin{proposition}\label{proposition: all spums equal for K_n 5+}
For $n \geq 5$, we have
\[ \iuspum(K_n)=\uspum(K_n)=\ispum(K_n)=\spum(K_n)=4n-6.\]
\end{proposition}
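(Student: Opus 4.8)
The plan is to collapse the four-way equality to a \emph{single} lower bound. By \cref{lemma: iuspum <= uspum} and the relations collected in \cref{fig:spum_relations}, we have the chain $\iuspum(K_n) \le \uspum(K_n) \le \spum(K_n)$ together with $\iuspum(K_n) \le \ispum(K_n)$, so $\iuspum(K_n)$ is the smallest of the four quantities. Hence, if I can prove the single inequality $\iuspum(K_n) \ge 4n-6$, then $\spum(K_n) \ge \uspum(K_n) \ge \iuspum(K_n) \ge 4n-6$ and $\ispum(K_n) \ge \iuspum(K_n) \ge 4n-6$, and it remains only to produce matching upper bounds of $4n-6$ for all four.

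For the upper bounds I would exhibit the explicit positive labeling
\[ L = \{2n-3,\, 2n-2,\, \dots,\, 3n-4\} \cup \{4n-5,\, 4n-4,\, \dots,\, 6n-9\}, \]
whose first block of $n$ consecutive integers forms the clique and whose second block of $2n-3$ consecutive integers realizes all the pairwise clique sums. A direct check shows every pairwise sum lands in the second block, the second block consists of isolated vertices, and no spurious edge arises, since the smallest sum of a clique label and a sum-label is $(2n-3)+(4n-5)=6n-8$, already exceeding $\max L = 6n-9$. As $\range(L)=4n-6$ and this labeling uses $2n-3 = \sigma(K_n) = \zeta(K_n)$ isolated vertices (a known value; see \cite{gallian2018dynamic}), it simultaneously certifies $\spum(K_n)\le 4n-6$ and $\ispum(K_n)\le 4n-6$, and being a valid positive labeling it also gives $\uspum(K_n)\le 4n-6$ and $\iuspum(K_n)\le 4n-6$. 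Combined with the lower bound, this pins all four values to $4n-6$.

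The heart of the argument, and the main obstacle, is therefore the integral lower bound $\iuspum(K_n)\ge 4n-6$ for $n\ge 5$. Let $L\subset\Z$ induce $K_n$ together with isolated vertices, and let $a_1<\cdots<a_n$ be the clique labels. The two elementary facts $\max L \ge a_{n-1}+a_n$ (a pairwise sum, hence a label) and $\min L \le a_1$ give $\range(L)\ge (a_{n-1}+a_n)-a_1$, but this alone yields only about $2n-2$; the extra factor needed to reach $4n-6$ must come from the no-extra-edge condition, which forbids the labels from being packed into a short interval. Concretely, in the all-positive regime one uses $\min S = \min L$ from \cref{remark: a1 = min L sd} and argues that if the clique occupied too short a range, some clique label added to a pairwise-sum label would itself re-enter the label set, creating a forbidden edge; quantifying this trade-off between the position of $\min L$ and the spread of the clique forces $\range(L) \ge a_1 + (2n-3) \ge 4n-6$.

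The genuine difficulty is the integral case, where negative labels are permitted and \cref{remark: a1 = min L sd} can fail: the smallest label need no longer be a clique vertex, but may instead be a negative isolated witness $a_i + a_j$. I would handle this by a case analysis on the sign pattern of the clique labels, showing that for $n\ge 5$ no mixed positive/negative (in particular, no near-symmetric) labeling can undercut the all-positive construction, because the clique is too dense: extending the label set in both directions to accommodate all pairwise sums costs more range than a symmetric layout saves, collapsing only for the small cases $n\le 4$ treated separately. This is exactly the kind of step where one must not assume that a large label witnessing spread belongs to the clique rather than to the isolated vertices, which is the logical gap flagged in \cref{remark: singla error spum cycle proof} for the analogous cycle argument; the plan is to carry out the sign analysis carefully and to finish the finitely many small residual configurations by direct verification.
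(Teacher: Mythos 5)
Your reduction and your upper-bound construction are both fine: the chain of inequalities matches the paper's, and your labeling $L=[2n-3,3n-4]\cup[4n-5,6n-9]$ does induce $K_n$ with $2n-3$ isolated vertices at range $4n-6$ (the paper instead just cites \cite{singla2021some} for the upper bounds, so this part is a serviceable, self-contained alternative). But the heart of the proposition --- the lower bound $\iuspum(K_n)\ge 4n-6$ --- is never actually proved in your proposal; it is only described as a plan, and the one quantitative claim you do make is unjustified. In the all-positive regime you assert that the trade-off ``forces $\range(L)\ge a_1+(2n-3)\ge 4n-6$,'' which implicitly requires $a_1\ge 2n-3$; nothing in your sketch establishes that, and no mechanism you describe gets past roughly $3n$. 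Indeed, the natural count available from your observations --- the clique labels $S$ together with the pairwise sums $a_1+(S\setminus\{a_1\})$ and $a_n+(S\setminus\{a_1,a_n\})$, which must all lie in $L$ and (for $n\ge5$) be pairwise disjoint --- gives only $|L|\ge 3n-3$, hence $\range(L)\ge 3n-4$, which falls short of $4n-6$.

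The missing idea, which is exactly what the paper supplies, is a second counting ingredient beyond ``many labels'': a set of $n-2$ integers that lie \emph{inside} $[\min L,\max L]$ but are provably \emph{not} in $L$. The paper takes $B=a_n+(S\setminus\{a_1,a_n\})$ and shows that the translate $B-a_1$ (when $a_1>0$) or $B+a_1$ (when $a_1<0$) is disjoint from $L$ --- otherwise $a_1$ would be adjacent to a label outside the clique --- yet is contained in $[\min L,\max L]$. Adding these $n-2$ forced gaps to the $3n-3$ forced labels yields $\range(L)\ge(3n-3)+(n-2)-1=4n-6$. This argument also handles your ``genuine difficulty'' (mixed signs) uniformly, because the disjointness of $A$, $B$, $S$ for $n\ge5$ is Chen's theorem \cite{chen1996harary}, valid over $\Z$, and the sign of $a_1$ only determines which translate of $B$ one uses; no case analysis on sign patterns of the whole clique, and no separate ``small residual configurations,'' are needed. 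Without this (or an equivalent) gap-counting device, your proposed sign analysis has no route to the constant $4$ in $4n-6$, so as written the proposal does not prove the proposition.
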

\begin{proof}
From \cite[Theorem 3.2]{singla2021some} we know $\ispum(K_n)=\spum(K_n)=4n-6$.
Hence
\[ \iuspum(K_n) \leq \uspum(K_n) \leq \spum(K_n)=4n-6.\]
It suffices to show that $\iuspum(K_n) \geq 4n-6$.

Our proof is similar to the original proof by Singla, Tiwari, and Tripathi \cite[Theorem 3.2]{singla2021some} that $\ispum(K_n)=4n-6$.
Let $L$ be a labeling of an integral sum graph $G$ consisting of $K_n$ and some isolated vertices.
Let the labels of $K_n$ be $S=\{a_1,\dots,a_n\}$, sorted in increasing order $a_1 < \cdots < a_n$.
As any two $a_i$ and $a_j$ must be adjacent, we have $a_i+a_j \in L$ for all $1 \leq i < j \leq n$.
Furthermore, observe that if $x,x+a_i \in L$ for some $i$ such that $x\neq a_i$, then $x \in S$, because $a_i$ and $x$ are adjacent and thus $x$ must be part of $K_n$.
We will make use of this fact frequently.

Define $A=a_1+\left(S\setminus\{a_1\}\right)$ and $B=a_n+\left(S\setminus\{a_1,a_n\}\right)$.
From Chen \cite[Theorem 1]{chen1996harary} we know that for $n \geq 5$, the sets $A$, $B$, and $S$ are pairwise disjoint.
Hence, $A\cup B \subseteq L\setminus S$.
We now show that $a_i + a_j \in L \setminus S$ for all $1 \leq i < j \leq n$.
Using the prior results about $A$ and $B$, it suffices to show this for the remaining case $1 < i < j < n$.
Suppose otherwise, that $a_i + a_j \in S$ for some $1 < i < j < n$, so that $a_i + a_j = a_k$ for some $1 \leq k \leq n$.
Then $a_n + a_j \in L$ and $(a_n+a_j)+a_i = a_n + a_k \in L$.
As $\zeta(K_n)=2n-3 > 0$, we find $0 \not\in L$; we can assume $a_n > 0$, as otherwise all $a_i < 0$ and we can use $-L$ instead of $L$.
Moreover, $a_n + a_j > 0 + a_i$, so $a_n + a_j \neq a_i$, so by our prior observation this implies $a_n + a_j \in S$, yet $a_n + a_j \in B$ which is disjoint from $S$, a contradiction.

We now consider $B-|a_1|$, and we split into two cases depending on the sign of $a_1$.

If $a_1 > 0$, notice that $B-a_1$ is disjoint from $S$, as $\max S = a_n < a_n + (a_2-a_1) = \min(B-a_1)$.
We now claim $B-a_1$ is disjoint from $L$.
Suppose otherwise: then there exists an $i$ such that $a_n + a_i - a_1 \in L \setminus S$.
Suppose $a_n + a_i - a_1 = x$, for $x \in L$.
Then $x+a_1 = a_n + a_i$, where $x = a_n + a_i - a_1 > a_i > a_1$, so $x$ and $a_1$ are adjacent, but $a_1$ cannot be adjacent to an element in $L\setminus S$.
Note that $B-a_1 \subset [\min L, \max L]$, as
\[ \min L \leq \max S < \min(B-a_1) < \max(B-a_1) < \max B \leq \max L.\]

If $a_1 < 0$, then $B+a_1$ is disjoint from $L$, as otherwise $a_n + a_i + a_1 \in L$ for some $i$.
But then $a_n+a_i \in L\setminus S$ is adjacent to $a_1\in S$, which is forbidden.
Note that $B+a_1 \subset [\min L, \max L]$, as
\begin{align*}
    \min L &\leq a_1 + a_2 < a_1 + a_2 + a_n = \min(B+a_1) < \max(B+a_1) 
    \\ &= a_1 + a_{n-1} + a_n < a_{n-1} + a_n \leq \max L.
\end{align*}

In either case, as $A \cup B \subseteq L \setminus S$, we have
\[ |L| \geq |S|+|A|+|B|=n+(n-1)+(n-2)=3n-3,\]
and as $\left|B-|a_1|\right| = n-2$, we have
\[ \max L - \min L \geq |L| + \left|B-|a_1|\right| - 1 \geq (n+(2n-3)) + (n-2) - 1 = 4n-6.\]
Hence, $\iuspum(K_n) \geq 4n-6$.
\end{proof}

It is known and easy to confirm that $\sigma(K_2)=1$ and $\sigma(K_3)=2$ (see for example \cite{gallian2018dynamic}), whereas $\zeta(K_2)=\zeta(K_3)=0$, which can be seen by the labelings $\{0,1\}$ and $\{0,\pm 1\}$.
From \cite[Lemma 3.1]{singla2021some}, we have $\spum(K_n)=4n-6$ even for $n=2,3$, but $\ispum(K_2)=1$ and $\ispum(K_3)=2$.

\begin{lemma}\label{lemma: spums equal for K_n 2-3}
We have $\uspum(K_n)=\spum(K_n)=4n-6$ for $n=2,3$, as well as $\iuspum(K_2)=\ispum(K_2)=1$ and $\iuspum(K_3)=\ispum(K_3)=2$.
\end{lemma}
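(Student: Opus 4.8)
The upper bounds require no work: since $\iuspum(G)\le\uspum(G)\le\spum(G)$ and $\iuspum(G)\le\ispum(G)$ for every graph $G$ (see \cref{lemma: iuspum <= uspum} and the relations preceding it), the already-cited facts $\spum(K_n)=4n-6$ for $n=2,3$ together with $\ispum(K_2)=1$ and $\ispum(K_3)=2$ immediately give $\uspum(K_n)\le 4n-6$, $\iuspum(K_2)\le 1$, and $\iuspum(K_3)\le 2$. Thus the whole content lies in the matching lower bounds, which I would establish one at a time. Three of them are pure counting: a labeling inducing $K_2$ needs two distinct labels, so $\range(L)\ge 1$, giving $\iuspum(K_2)\ge 1$; a labeling inducing $K_3$ needs three distinct labels, so $\range(L)\ge 2$, giving $\iuspum(K_3)\ge 2$. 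For $\uspum(K_2)\ge 2$, if the two adjacent vertices carry positive labels $a<b$ then $a+b\in L$ is a third, strictly larger label, whence $\range(L)\ge(a+b)-a=b\ge 2$.

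The only substantial claim is $\uspum(K_3)\ge 6$; I would stress that the general degree bound \cref{lemma: uspum lower bound degrees} yields only $\ge 4$ here, so a dedicated arithmetic argument is unavoidable. Take a labeling $L$ with $\range(L)=\uspum(K_3)$, and let $a<b<c$ be the triangle labels, so that $a+b,a+c,b+c\in L$. Both $a+c$ and $b+c$ exceed $c$ and are distinct, with $b+c$ the largest, so $\max L\ge b+c$, while $\min L=a$ by \cref{remark: a1 = min L sd}; hence $\range(L)\ge(b+c)-a$. I would then split into three cases according to whether $a+b$ is less than, equal to, or greater than $c$.

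In the case $a+b=c$, the labels $a,\,b,\,c=a+b,\,a+c=2a+b,\,b+c=a+2b$ force $\range(L)\ge 2b$, so it suffices to exclude $b=2$; but $b=2$ forces $a=1$, $c=3$, hence $\{1,2,3,4,5\}\subseteq L$, and $1+4=5\in L$ makes the triangle vertex $1$ adjacent to the isolated label $4$, contradicting that triangle vertices have degree $2$. When $a+b\ne c$, the six labels $a,b,c,a+b,a+c,b+c$ are pairwise distinct, so $\range(L)\ge 5$, and I would rule out equality. If $\range(L)=5$ then these six labels are exactly the consecutive block $\{a,a+1,\dots,a+5\}$, and comparing them against the forced increasing order (namely $a<b<a+b<c<a+c<b+c$ when $a+b<c$, and $a<b<c<a+b<a+c<b+c$ when $a+b>c$) pins down a unique triple in each subcase: $(a,b,c)=(1,2,4)$ and $(a,b,c)=(2,3,4)$, respectively. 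In the first a direct check gives the forbidden edge $1+3=4$, and in the second the forbidden edge $2+5=7$, so $\range(L)=5$ is impossible and $\range(L)\ge 6$.

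The main obstacle is precisely this elimination of the boundary configurations: once the candidate label set is forced to be six consecutive integers, one must verify that an unwanted edge necessarily appears, and one must argue carefully that $\range(L)=5$ genuinely forces both consecutiveness and a \emph{unique} triple in each of the two subcases. These are finite, low-complexity checks, but they are exactly where the specific arithmetic of $K_3$, rather than the generic degree inequality, supplies the missing increment from $5$ to $6$.
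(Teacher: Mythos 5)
Your proposal is correct, and for the one substantial claim, $\uspum(K_3)\geq 6$, it takes a genuinely different route from the paper. The paper's proof splits on the \emph{number of added isolated vertices}: with exactly two (the minimum, since $\sigma(K_3)=2$), the range is at least $\spum(K_3)=6$ by the very definition of spum; with at least four, there are at least seven labels and hence range at least $6$; and in the remaining case of exactly three (six labels total), it rules out range $5$ by noting $L$ would be six consecutive integers $\{a_1,\dots,a_1+5\}$, where $a_1\leq 2$ forces $\deg(a_1)\geq 3$ and $a_1\geq 3$ forces every edge to be incident to $a_1$, neither of which yields $K_3$. You instead argue directly from the three triangle labels $a<b<c$ and their pairwise sums, splitting on the comparison of $a+b$ with $c$ and eliminating the boundary configurations $(1,2,4)$ and $(2,3,4)$ by exhibiting a forbidden edge; your case analysis is complete and the forced orderings and unique triples check out. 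The trade-off: the paper's argument is shorter but leans on $\spum(K_3)=6$ and $\sigma(K_3)=2$ as black boxes (and implicitly on the definition of spum fixing the number of isolated vertices), whereas your lower-bound argument is entirely self-contained, using only \cref{remark: a1 = min L sd} and elementary arithmetic, at the cost of slightly more casework. Both proofs ultimately reduce to finite checks of consecutive-integer label sets, and both handle the remaining assertions ($\uspum(K_2)$, $\iuspum(K_2)$, $\iuspum(K_3)$, and all upper bounds) in essentially the same way, by counting distinct labels and citing the known spum and ispum values.
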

\begin{proof}
The spum and ispum values are already known.
To show that $\uspum(K_2)=2$, notice that as $K_2$ has no isolated vertices, at least one additional vertex is needed, yielding at least 3 vertices and thus $\range(L) \geq 2$ for all valid labelings $L$.
A range of 2 can be achieved by $\{1,2,3\}$, and thus $\uspum(K_2)=2$.

To show that $\uspum(K_3)=6$, since we have $\sigma(K_3) = 2$ we need at least two more vertices.
If we use exactly two additional vertices, then the smallest value of $\range(L)$ is simply $\spum(K_3) = 6$.
If we use at least four additional vertices, for a total of at least seven, then $\range(L) \geq 6$.
The only remaining case is using three additional vertices.
With six vertices, we have $\range(L) \geq 5$.
In order to rule out $\uspum(K_3)=5$, we simply need to check the cases when $L=\{a_1,a_1+1,\dots,a_1+5\}$.
If $a_1 \leq 2$, then the degree of $a_1$ exceeds two, while the maximum degree in $K_3$ is two.
If $a_1 \geq 3$, then all edges are incident to $a_1$, as any other edge, say between $a_i$ and $a_j$, would require $a_i + a_j \in L$, but 
\[ a_i + a_j \geq a_2 + a_3 = 2a_1 + 3 \geq a_1 + 6 \geq \max L, \]
so this is impossible.
This cannot yield $K_3$, so $\uspum(K_3) = 6$.

To show that $\iuspum(K_2)=1$, notice that $K_2$ already has two vertices so $\iuspum(K_2) \geq 1$, but $\iuspum(K_2) \leq \ispum(K_2)=1$, so $\iuspum(K_2) = 1$.

Similarly, to show that $\iuspum(K_3)=2$, as $K_3$ itself has three vertices, we have $\iuspum(K_3) \geq 2$, but $\iuspum(K_3) \leq \ispum(K_3) = 2$, so $\iuspum(K_3) = 2$.
\end{proof}

\begin{lemma}\label{lemma: spums equal for K_4}
For $n=4$, we have $\iuspum(K_n)=\uspum(K_n)=\ispum(K_n)=\spum(K_n)=4n-6$.
\end{lemma}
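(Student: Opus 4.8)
The plan is to reduce the fourfold equality to the single estimate $\iuspum(K_4) \geq 10$, after which everything is pinned down. For the upper bounds I would cite $\spum(K_4) = 4\cdot 4 - 6 = 10$ from \cite[Lemma 3.1]{singla2021some}, which together with \cref{lemma: iuspum <= uspum} and the relation $\uspum \le \spum$ gives $\iuspum(K_4) \leq \uspum(K_4) \leq 10$. Since $\sigma(K_4) = \zeta(K_4) = 5$ (both equal $2n-3$ at $n=4$; see \cite{gallian2018dynamic}), a spum-optimal positive labeling induces $K_4$ together with exactly $\zeta(K_4)$ isolated vertices and has range $10$, so it is simultaneously a valid integral labeling for $\ispum$; hence $\ispum(K_4) \leq 10$. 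Thus proving $\iuspum(K_4) \geq 10$ forces all four quantities to equal $10$.

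For the lower bound I would reuse the structure of \cref{proposition: all spums equal for K_n 5+} but avoid its appeal to Chen's theorem \cite[Theorem 1]{chen1996harary}, which is valid only for $n \geq 5$. The disjointness machinery there serves only to establish the count $|L| \geq 3n-3$, and at $n=4$ this is immediate: any integral labeling $L$ inducing $K_4$ together with isolated vertices has at least $\zeta(K_4)=5$ isolated vertices by definition of the integral sum number, so $|L| \geq 4+5 = 9$. Let $S = \{a_1 < a_2 < a_3 < a_4\}$ be the labels of $K_4$; since $\zeta(K_4) > 0$ we have $0 \notin L$, and replacing $L$ by $-L$ if necessary we may assume $a_4 > 0$. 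Writing $B = \{a_2+a_4,\, a_3+a_4\}$, it then suffices to exhibit two elements of the integer interval $[\min L, \max L]$ lying outside $L$, since then $\range(L)+1 = |[\min L,\max L]| \geq |L|+2 \geq 11$.

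These two elements are the shifts $B - a_1$ (if $a_1>0$) or $B+a_1$ (if $a_1<0$), exactly as in \cref{proposition: all spums equal for K_n 5+}. When $a_1>0$ every element of $B$ exceeds $\max S = a_4$, so $B \cap S = \emptyset$ for free, and the verbatim argument shows $B - a_1 \subseteq [\min L,\max L]\setminus L$: a common element with $L$ would be a neighbor of the $K_4$-vertex $a_1$ and hence lie in $S$, contradicting $(B-a_1)\cap S = \emptyset$, while containment in the interval follows from the same inequalities as before. The case $a_1 < 0$ is where $n=4$ genuinely departs from larger $n$, and I expect it to be the main obstacle, since $B \cap S = \emptyset$ is no longer automatic. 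Here the only possible collision is $a_2+a_4 = a_3$ (one checks $a_3+a_4 \notin S$ for any sign of $a_3$, and $a_2+a_4$ can only coincide with $a_3$), and I would rule it out directly: the relation $a_2+a_4=a_3$ forces $a_1+a_4 = a_2$ (otherwise $a_1+a_4$ is a neighbor of the vertex $a_2$ lying strictly between $a_1$ and $a_4$, hence equal to $a_3$, which gives $a_1=a_2$), and then $a_1+(a_3+a_4) = a_2+a_3 \in L$ exhibits the vertex $a_1$ as adjacent to $a_3+a_4 \notin S$, which is forbidden.

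With the collision excluded we again have $B \cap S = \emptyset$, so the argument of \cref{proposition: all spums equal for K_n 5+} for $a_1<0$ applies verbatim to place $B+a_1$ inside $[\min L,\max L]\setminus L$: an element of $(B+a_1)\cap L$ would make some $a_4+a_i \in L\setminus S$ adjacent to $a_1$, a contradiction. In either sign case we obtain two elements of the range interval missing from $L$, whence $\range(L) \geq 10$ and therefore $\iuspum(K_4) \geq 10$, completing the fourfold equality. The crux of the whole argument is the single exceptional configuration $a_2+a_4=a_3$ in the $a_1<0$ case; everything else is the elementary shifting argument inherited from the $n\ge 5$ proof, now made self-contained by the bound $|L| \geq n + \zeta(K_4)$ in place of Chen's theorem.
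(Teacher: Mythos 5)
Your proof is correct, and it takes a genuinely different route from the paper's. Both arguments handle the upper bounds the same way (via the construction in \cite[Theorem 3.1]{singla2021some}, with $\sigma(K_4)=\zeta(K_4)$ letting the spum bound transfer to ispum) and both reduce everything to showing $\iuspum(K_4)\geq 10$. For that lower bound, the paper splits on whether at least three of the four labels of $K_4$ share a sign: if so, it invokes \cref{proposition: all spums equal for K_n 5+} as a black box (hence Chen's theorem \cite{chen1996harary}, which applies in that situation), and in the remaining case $a_1<a_2<0<a_3<a_4$ it abandons the shifting argument altogether, arguing instead that $0\notin L$ creates a gap, that there are at least $9$ labels, and that a range-$9$ labeling would have to be a consecutive block minus $\{0\}$, which after normalization contains $[1,5]\cup[-2,-1]$ and gives the label $1$ degree at least $4$, a contradiction. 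You instead split on the sign of $a_1$ and run the identical two-element shifting argument in both cases; the price is that for $a_1<0$ the disjointness $B\cap S=\emptyset$ must be proved by hand, and you do this correctly: $a_2+a_4=a_3$ is indeed the only possible collision, and your exclusion of it (it forces $a_1+a_4=a_2$, which in turn makes the $K_4$-vertex $a_1$ adjacent to the label $a_3+a_4\notin S$) is sound. Your other substitution --- deducing $|L|\geq 9=3n-3$ from $\zeta(K_4)=5$ rather than from the pairwise disjointness of $S$, $A$, $B$ --- is also valid and is what lets you avoid Chen's theorem entirely, in every sign pattern. What each approach buys: yours is uniform and fully self-contained, with the collision analysis as its only delicate point; the paper's is shorter on the page because it reuses \cref{proposition: all spums equal for K_n 5+} wholesale and dispatches the single leftover sign pattern by elementary counting.
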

\begin{proof}
From \cite[Theorem 3.1]{singla2021some} we know $\ispum(K_n) \leq \spum(K_n) \leq 4n-6$.
Hence it suffices to show that $\iuspum(K_n) \geq 4n-6$.

The proof is similar to the original proof that $\ispum(K_n)=4n-6$, except $n=4$ requires some additional consideration due to the result from \cite{chen1996harary} assuming $n \geq 5$.
In particular, \cite{chen1996harary} requires that at least three labels in $S$ are of the same sign.
So if at least three of the four labels in $S$ are of the same sign, the same argument as in \cref{proposition: all spums equal for K_n 5+} applies, yielding $\range(L) \geq 4n-6 = 10$.

Otherwise, we have $a_1 < a_2 < 0 < a_3 < a_4$.
Note that 0 cannot be in $L$ as 0 would be adjacent to every other label, yet we must have at least $\zeta(K_4)=\sigma(K_4)=2n-3=5$ isolated vertices.
We need at least 9 total vertices.
If there are exactly 9 vertices in our graph, and the interval $[\min L, \max L]$ must contain $0 \not\in L$, we find $\range(L) \geq 9$ with equality if and only if there are no other gaps, i.e., $L=[x,x+9]\setminus\{0\}$ for some $x<0$.
Replacing $L$ with $-L$ if necessary, we can assume at least 5 of the terms are positive, whereas at least 2 of the terms are negative as $a_1 < a_2 < 0 < a_3 < a_4$.
So $L$ contains $[1,5] \cup [-2,-1]$, meaning 1 is adjacent to 2, 3, 4, and $-2$, yielding degree 4 which is too high.
Thus $\range(L) \geq 10$ in this case.

If we have at least 10 vertices, where $a_1 < a_2 < 0 < a_3 < a_4$, then $[\min L, \max L]$ contains $0 \not\in L$, so $\range(L) \geq 10$.

In all cases, $\range(L) \geq 10$, so $\iuspum(K_4) \geq 10 = 4n-6$.
\end{proof}

Together, these three results imply the following theorem.
\begin{theorem}\label{theorem: unlimited equal for K_n}
We have $\uspum(K_n)=\spum(K_n)=4n-6$ for all $n \geq 2$.
In addition, for all $n \geq 2$ we have $\isd(K_n)=\ispum(K_n) = \begin{cases} n-1 & n \leq 3 \\ 4n-6 & n \geq 4.\end{cases}$
\end{theorem}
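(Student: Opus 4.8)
The plan is to assemble the theorem directly from the three preceding results by a case analysis on $n$, since together they already cover every value of $n \geq 2$. No new combinatorial work should be required: the genuine content lies in the lower bounds $\iuspum(K_n) \geq 4n-6$ established in \cref{proposition: all spums equal for K_n 5+} and \cref{lemma: spums equal for K_4} (which in turn rest on Chen's disjointness result \cite{chen1996harary}), together with the small-case computations in \cref{lemma: spums equal for K_n 2-3}. Here the task is purely to verify that these statements tile the range $n \geq 2$ and to read off the asserted values.

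First I would handle the claim $\uspum(K_n) = \spum(K_n) = 4n-6$. For $n \in \{2,3\}$ this is exactly the content of \cref{lemma: spums equal for K_n 2-3}; for $n = 4$ it is \cref{lemma: spums equal for K_4}; and for $n \geq 5$ it is part of \cref{proposition: all spums equal for K_n 5+}. Since these three cases are exhaustive for $n \geq 2$, the first assertion follows immediately.

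Next I would treat the integral statement $\isd(K_n) = \ispum(K_n)$, recalling that $\isd$ and $\iuspum$ both denote the integral sum-diameter. For $n \leq 3$, \cref{lemma: spums equal for K_n 2-3} gives $\iuspum(K_2) = \ispum(K_2) = 1$ and $\iuspum(K_3) = \ispum(K_3) = 2$, which matches the claimed value $n-1$. For $n \geq 4$, the common value is $4n-6$: the case $n = 4$ is \cref{lemma: spums equal for K_4}, and the case $n \geq 5$ is again \cref{proposition: all spums equal for K_n 5+}. Combining these ranges establishes the piecewise formula.

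The only thing to watch is the bookkeeping, namely confirming that the three cited results partition $n \geq 2$ with neither overlap nor gap, and that the small integral values $1$ and $2$ indeed agree with $n-1$ for $n = 2, 3$. Since the substantive difficulty (the lower bounds via the Sidon/disjointness machinery) was already discharged in proving the cited statements, I do not anticipate any real obstacle here; the step most worth double-checking is simply that no value of $n$ is dropped in the case split.
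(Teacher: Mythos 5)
Your proposal is correct and matches the paper exactly: the paper itself proves this theorem with the single sentence ``Together, these three results imply the following theorem,'' referring to \cref{proposition: all spums equal for K_n 5+}, \cref{lemma: spums equal for K_n 2-3}, and \cref{lemma: spums equal for K_4}, which is precisely the case assembly you describe. Your bookkeeping (the three cases tile $n \geq 2$, and the small integral values $1$ and $2$ agree with $n-1$) is accurate.
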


\begin{remark}\label{remark: singla error K_n Chen n=4}
In \cite{singla2021some}, Theorem 3.2 states that for $n \geq 4$, we have $\spum(K_n)=\ispum(K_n)=4n-6$.
Their proof cites results from \cite{chen1996harary} claiming that $S$ is sum-free, i.e., $a_i + a_j \in L\setminus S$ for all $1 \leq i < j \leq n$.
However, \cite{chen1996harary} only demonstrates that $A \cup B \subseteq L\setminus S$, i.e., they only address sums $a_i+a_j$ where either $i=1$ or $j=n$.
Thankfully, as shown in the proof of \cref{proposition: all spums equal for K_n 5+}, it is not hard to extend this observation to include all $a_i+a_j$; this has also been addressed in a recent paper by Elizeche and Tripathi \cite{elizeche2020characterization}.

More pressingly, \cite{chen1996harary} assumes $n \geq 5$ in order for at least three labels to be of the same sign, which allows them to conclude that $A \cup B \subseteq L \setminus S$.
Thus, the proof of \cite[Theorem 3.2]{singla2021some} does not hold for $n=4$.
We separately address this case in \cref{lemma: spums equal for K_4}, taking care to only assume $\ispum(K_4)\leq \spum(K_4) \leq 10$ using the explicit construction from \cite[Theorem 3.1]{singla2021some}, rather than using the stronger assertion of \cite[Theorem 3.2]{singla2021some}.

\cref{lemma: spums equal for K_4} resolves the case $n=4$, and thus while the proof of \cite[Theorem 3.2]{singla2021some} may not be entirely correct, the result itself is demonstrably correct.
\end{remark}

\section{Integral sum-diameter of cycles $C_n$ and paths $P_n$}\label{section: isd cycle path}
The current knowledge regarding $\ispum(C_n)$ and $\ispum(P_n)$ is relatively lacking compared to other common families of graphs.
The integral spum of complete graphs $K_n$, star graphs $K_{1,n}$ and complete symmetric bipartite graphs $K_{n,n}$ are all exactly known \cite{singla2021some}.
The spum of cycles was previously bounded within $[2n-2,2n-1]$ by \cite{singla2021some}; however, the current best bounds for the integral spum of cycles are
\begin{align*}
    2n-5 \leq \ispum(C_n) \leq \begin{cases} 8(n-9) & \text{if } n \text{ is odd} \\ \frac{3}{2}(3n-14) & \text{if } n \text{ is even},\end{cases}
\end{align*}
where the upper bound comes from \cref{theorem: ispum cycle upper bound} and holds for $n \geq 12$.
The lower bound is due to \cite{singla2021some} and holds for all $n \geq 4$.

Intuitively, the reason it is harder to close this gap for the integral spum of cycles is that $\zeta(C_n)=0$ for all $n \neq 4$, as shown by \cite{sharary1996integral}, while $\sigma(C_n)=2$ for all $n \neq 4$.
This restriction to not being able to use additional isolated vertices makes constructing an efficient, i.e., low range, labeling $L$ quite difficult.

This is a second example in which the sum-diameter is arguably a better property to study, in addition to there being simple upper bounds on the sum-diameter of arbitrary graphs as provided in \cref{section: sum-diameter}.
First, we bound $\uspum(C_n)$ by the same bounds as for $\spum(C_n)$.
Notice that we assume $n \geq 4$, as when $n=3$ we recover $K_3$, which we have completely determined in \cref{section: uspum K_n}.
\begin{proposition}\label{proposition: trivial C_n uspum bounds}
For all $n \geq 4$, we have
\begin{align*}
    2n-2 \leq \uspum(C_n) \leq 2n-1.
\end{align*}
\end{proposition}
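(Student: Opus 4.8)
The statement to prove is the two-sided bound $2n-2 \le \uspum(C_n) \le 2n-1$ for $n \ge 4$. The lower bound is essentially immediate from the general machinery already developed, and the upper bound should be recovered by reusing the explicit labeling that was already constructed for $\spum(C_n)$.

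For the lower bound, I would simply invoke \cref{lemma: uspum lower bound degrees}, which gives $\uspum(G) \ge 2n - (\Delta - \delta) - 2$ for any graph $G$ of order $n$ with no isolated vertices. A cycle $C_n$ is $2$-regular, so $\Delta = \delta = 2$, whence $\Delta - \delta = 0$ and the bound reads $\uspum(C_n) \ge 2n - 2$. This is one line once the earlier theorem is cited.

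For the upper bound, the key observation is that $\uspum(G) \le \spum(G)$ holds for every graph $G$ (this is stated in the excerpt immediately after \cref{definition: uspum}), since any labeling witnessing $\spum$ is in particular an admissible labeling for $\uspum$. By \cref{theorem: spum C_n}, we have $\spum(C_n) = 2n-1$ for all $n \ge 4$, so $\uspum(C_n) \le \spum(C_n) = 2n-1$. Thus the upper bound follows directly from the already-determined value of the spum, with no new construction required. Combining the two displays yields $2n-2 \le \uspum(C_n) \le 2n-1$, as claimed.

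I do not anticipate any real obstacle here, since both inequalities are corollaries of results proved earlier in the paper: the lower bound from the degree-based bound of \cref{lemma: uspum lower bound degrees} applied to the $2$-regular graph $C_n$, and the upper bound from the universal inequality $\uspum \le \spum$ together with the exact value $\spum(C_n) = 2n-1$ from \cref{theorem: spum C_n}. The only thing to be slightly careful about is that \cref{theorem: spum C_n} is stated for $n \ge 4$ (with $C_3 = K_3$ handled separately), which matches the hypothesis $n \ge 4$ of the present proposition exactly, so there is no gap to worry about at the small cases.
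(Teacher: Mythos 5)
Your proposal is correct and follows exactly the paper's own proof: the lower bound is cited from \cref{lemma: uspum lower bound degrees} applied to the $2$-regular cycle, and the upper bound from $\uspum(C_n) \leq \spum(C_n) = 2n-1$ via \cref{theorem: spum C_n}. Nothing is missing, and your remark about the $n \geq 4$ hypothesis matching \cref{theorem: spum C_n} is apt.
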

\begin{proof}
The upper bound follows from the fact that $\uspum(C_n) \leq \spum(C_n) = 2n-1$, from \cref{theorem: spum C_n}.
The lower bound follows from \cref{lemma: uspum lower bound degrees}.
\end{proof}

This allows us to bound $\iuspum(C_n)$ far more effectively than the current bounds for $\ispum(C_n)$.
\begin{proposition}\label{proposition: trivial C_n iuspum bounds}
For all $n \geq 4$, we have
\begin{align*}
    2n-5 \leq \iuspum(C_n) \leq 2n-1.
\end{align*}
\end{proposition}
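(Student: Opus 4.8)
The plan is to derive both bounds as direct consequences of results already established in the excerpt, since $C_n$ is a $2$-regular graph with $\Delta = \delta = 2$, which makes the general degree-based bounds especially clean.

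For the upper bound, I would chain together two previously proven facts. First, \cref{lemma: iuspum <= uspum} gives $\iuspum(C_n) \leq \uspum(C_n)$ for every graph. Second, \cref{proposition: trivial C_n uspum bounds} establishes $\uspum(C_n) \leq 2n-1$ for all $n \geq 4$ (itself inherited from $\spum(C_n) = 2n-1$ in \cref{theorem: spum C_n}). Composing these yields $\iuspum(C_n) \leq \uspum(C_n) \leq 2n-1$ immediately.

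For the lower bound, I would invoke the general integral lower bound \cref{lemma: iuspum lower bound degrees}, which states $\iuspum(G) \geq 2n - \Delta - 3$ for any graph $G$ of order $n$ with maximum degree $\Delta$ and no isolated vertices. Since $C_n$ has $n$ vertices, no isolated vertices, and maximum degree $\Delta = 2$, substituting gives $\iuspum(C_n) \geq 2n - 2 - 3 = 2n - 5$, as required.

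Combining the two displays yields $2n-5 \leq \iuspum(C_n) \leq 2n-1$ for all $n \geq 4$. There is no genuine obstacle here: the statement is essentially a corollary, and the only thing worth flagging is bookkeeping, namely confirming that the hypotheses of \cref{lemma: iuspum lower bound degrees} (no isolated vertices) and \cref{proposition: trivial C_n uspum bounds} (validity for $n \geq 4$) are both met, which they are since cycles on $n \geq 4$ vertices are connected, $2$-regular, and strictly larger than $K_3$. The more interesting point, which I would note in a remark rather than in the proof, is that this already improves dramatically on the prior bound from \cref{eq: ispum cycle original bound}, whose upper bound was superlinear; the gain comes precisely from allowing more than $\zeta(C_n)=0$ extra isolated vertices, which is exactly the flexibility the sum-diameter affords over the integral spum.
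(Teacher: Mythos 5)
Your proof is correct and matches the paper's argument exactly: the upper bound via $\iuspum(C_n) \leq \uspum(C_n) \leq 2n-1$ from \cref{lemma: iuspum <= uspum} and \cref{proposition: trivial C_n uspum bounds}, and the lower bound by specializing \cref{lemma: iuspum lower bound degrees} with $\Delta = 2$. No gaps; the hypothesis checks you flag are the same ones implicit in the paper's one-line proof.
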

\begin{proof}
The lower bound follows from \cref{lemma: iuspum lower bound degrees}, and the upper bound follows from $\iuspum(C_n) \leq \uspum(C_n) \leq 2n-1$, where the first inequality comes from \cref{lemma: iuspum <= uspum} and the second comes from \cref{proposition: trivial C_n uspum bounds}.
\end{proof}

Similarly, for the path graphs $P_n$, the current best bounds for $\spum(P_n)$ are
\begin{align*}
    2n-2 \leq \spum(P_n) \leq \begin{cases} 2n+1 & \text{if } n \text{ is odd} \\ 2n-1 & \text{if } n \text{ is even}, \end{cases}
\end{align*}
where the lower bound is due to \cref{theorem: spum path lower bound} and holds for $n \geq 7$, and the upper bound is from \cref{theorem: spum path upper bound}.
However, the current best bounds for $\ispum(P_n)$ are
\begin{align}\label{eq: ispum bound P_n}
    2n-5 \leq \ispum(P_n) \leq \begin{cases} \frac{5}{2}(n-3) & \text{if } n \text{ is odd} \\ 2n-3 & \text{if } n \text{ is even}, \end{cases}
\end{align}
again due to \cite{singla2021some}, where the upper bounds were proven for $n \geq 7$.

Despite the integral spum being able to work with a larger set of possible labels, due to $\zeta(P_n)=0$ \cite{harary1994sum} being strictly less than $\sigma(P_n)=1$ \cite{harary1990sum}, it is not necessarily true that $\ispum(P_n) \leq \spum(P_n)$, and without any additional isolated vertices to work with, constructing an efficient labeling $L$ is quite difficult.

However, by using the sum-diameter, we find a much tighter result for integral sum-diameter.
First, we translate \cref{eq: spum bound P_n} to $\uspum(P_n)$.
\begin{proposition}\label{proposition: trivial P_n uspum bounds}
For $n \geq 3$, we have
\begin{align*}
    2n-3 \leq \uspum(P_n) \leq \begin{cases} 2n+1 & \text{if } n \text{ is odd} \\ 2n-1 & \text{if } n \text{ is even}. \end{cases}
\end{align*}
\end{proposition}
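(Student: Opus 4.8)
The plan is to establish both bounds for $\uspum(P_n)$ by leveraging the already-proven results for $\spum(P_n)$ together with the general degree-based lower bound. For the upper bound, I would simply invoke the inequality $\uspum(P_n) \leq \spum(P_n)$, which holds for all graphs by the remark immediately following \cref{definition: uspum}. Combined with \cref{theorem: spum path upper bound}, which states $\spum(P_n) \leq 2n+1$ for odd $n$ and $\spum(P_n) \leq 2n-1$ for even $n$ (valid for all $n \geq 3$), this immediately yields the desired upper bound with no further work.

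For the lower bound, the natural tool is \cref{lemma: uspum lower bound degrees}, which gives $\uspum(G) \geq 2n-(\Delta-\delta)-2$ for any graph $G$ without isolated vertices. The path $P_n$ on $n \geq 3$ vertices has two endpoints of degree $1$ and interior vertices of degree $2$, so $\Delta = 2$ and $\delta = 1$, giving $\Delta - \delta = 1$. Substituting yields $\uspum(P_n) \geq 2n - 1 - 2 = 2n-3$, exactly the claimed lower bound. Note that $P_n$ has no isolated vertices for $n \geq 3$ (it is connected), so the hypothesis of \cref{lemma: uspum lower bound degrees} is satisfied.

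I anticipate essentially no obstacle here, as this proposition is a routine translation of existing results. The only subtlety worth a sentence is confirming that the degree computation is correct for the boundary case $n = 3$ (where $P_3$ still has $\Delta = 2$, $\delta = 1$, since the middle vertex has degree $2$), and that the upper bound inequality $\uspum \leq \spum$ applies uniformly across the parity cases. Both are immediate. The proof therefore reduces to citing \cref{lemma: uspum lower bound degrees} for the lower bound and \cref{theorem: spum path upper bound} together with $\uspum(P_n) \leq \spum(P_n)$ for the upper bound.

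\begin{proof}
The upper bound follows from $\uspum(P_n) \leq \spum(P_n)$ combined with \cref{theorem: spum path upper bound}. The lower bound follows from \cref{lemma: uspum lower bound degrees}, since $P_n$ has $n$ vertices with maximum degree $\Delta = 2$ and minimum degree $\delta = 1$, giving $\uspum(P_n) \geq 2n - (\Delta - \delta) - 2 = 2n-3$.
\end{proof}
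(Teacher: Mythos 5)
Your proof is correct and takes exactly the same route as the paper: the paper's own proof is the one-liner that the upper bound follows from \cref{theorem: spum path upper bound} together with $\uspum(P_n) \leq \spum(P_n)$, and the lower bound from \cref{lemma: uspum lower bound degrees}. Your version merely spells out the degree computation $\Delta = 2$, $\delta = 1$ explicitly, which the paper leaves implicit.
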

\begin{proof}
The upper bound follows from \cref{theorem: spum path upper bound} and the fact that $\uspum(P_n) \leq \spum(P_n)$ for all $n$.
The lower bound follows from \cref{lemma: uspum lower bound degrees}.
\end{proof}
In fact, we can improve the upper bound using a construction with two isolated vertices.
\begin{proposition}\label{proposition: improved P_n sd bounds}
For $n \geq 3$, we have
\begin{align*}
    2n-3 \leq \sd(P_n) \leq 2n-2.
\end{align*}
\end{proposition}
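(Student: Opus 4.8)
The lower bound is immediate: $P_n$ has maximum degree $\Delta=2$ and minimum degree $\delta=1$, so \cref{lemma: uspum lower bound degrees} gives $\sd(P_n)\ge 2n-(\Delta-\delta)-2=2n-3$. All the content lies in the upper bound, and the plan is to refine the zigzag labeling already seen for $P_9$ in \cref{example: uspum < spum}. The guiding structural observation is this: if the $n$ path labels form a block of consecutive integers $[m,M]$ with $M-m=n-1$, and we fix two consecutive target sums $s$ and $s+1$, then the pairs of block labels summing to $s$ or to $s+1$ form a union of two near-perfect matchings on $[m,M]$, and for a suitable choice of parameters this union is a single Hamiltonian path. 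Declaring the two additional isolated vertices to be exactly $s$ and $s+1$ realizes those sums as labels (so the intended edges appear), and placing the block as low as possible drives the range down from the $2n-1$ of \cref{example: uspum < spum} to $2n-2$.

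Concretely I would split on parity. For odd $n=2k+1$ (with $k\ge 1$) I take $L=[2k,4k]\cup\{6k-1,6k\}$, and for even $n=2k$ (with $k\ge 2$) I take $L=[2k-1,4k-2]\cup\{6k-4,6k-3\}$. In each case a direct computation gives $\range(L)=2n-2$, the labels are positive and distinct, and the two largest labels sit strictly above the block. The intended path is exhibited explicitly: in the odd case the sequence $4k,\,2k,\,4k-1,\,2k+1,\,4k-2,\,2k+2,\,\dots,\,3k$ interleaves a descending run from $4k$ with an ascending run from $2k$, meeting at $3k$, and its consecutive sums alternate between $6k$ and $6k-1$; the even case admits the analogous sequence $4k-2,\,2k-1,\,4k-3,\,2k,\,\dots,\,3k-2$ with consecutive sums alternating between $6k-3$ and $6k-4$.

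The verification then reduces to three claims. First, no two distinct block labels sum to a block label, since the smallest such sum is $m+(m+1)=2m+1>M$ for our choice of $m$ (namely $m=2k$ or $m=2k-1$); hence an edge between two block labels forces their sum to equal $s$ or $s+1$. Second, the explicit sequence visits every block label exactly once with all consecutive sums in $\{s,s+1\}$, and a short degree count (each label lies in exactly one of the two matchings, except the two endpoints, which lie in only one) shows these are precisely all pairs summing to $s$ or $s+1$, so the induced graph on the block is exactly $P_n$. Third, $s$ and $s+1$ are genuinely isolated, because $s+x>s+1=\max L$ for every other label $x$, and $s+(s+1)$ far exceeds $\max L$.

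I expect the main obstacle to be the second claim: showing that the union of the two matchings is a single path rather than splitting into several paths or closing into cycles. Rather than argue connectivity abstractly, I would dispatch it exactly as in \cref{theorem: spum path upper bound}, by writing down the explicit alternating sequence above and checking its consecutive sums directly; this simultaneously proves it is a single spanning path and that every edge of the induced graph is accounted for. The remaining parity bookkeeping (verifying the endpoints and the range formula for each residue of $n$) is routine once the construction is fixed.
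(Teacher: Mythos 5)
Your proposal is correct and matches the paper's proof: your parity-split labelings $[2k,4k]\cup\{6k-1,6k\}$ and $[2k-1,4k-2]\cup\{6k-4,6k-3\}$ are exactly the paper's single labeling $L=[n-1,2n-2]\cup\{3n-4,3n-3\}$ written out for odd and even $n$, with the same alternating Hamiltonian path and the same verification that every edge must sum to one of the two top labels. The lower bound via \cref{lemma: uspum lower bound degrees} is likewise the paper's argument (routed there through \cref{proposition: trivial P_n uspum bounds}).
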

\begin{proof}
The lower bound follows from \cref{proposition: trivial P_n uspum bounds}.
The upper bound follows from the labeling $L=[n-1,2n-2]\cup\{3n-4,3n-3\}$, which induces the path $2n-2, n-1, 2n-3, n, \dots$, alternating between decreasing from $2n-2$ and increasing from $n-1$, where the exact formulation of the other end of the path depends on the parity of $n$.
It is straightforward to check that no other edges exist, as all edges must have their sum of the two labels being either $3n-4$ or $3n-3$.
\end{proof}

An exhaustive computer search yields $\sd(P_n)=2n-3$ for $3 \leq n \leq 6$, but $\sd(P_n)=2n-2$ for $7 \leq n \leq 13$.
This yields the following conjecture.
\begin{conjecture}\label{conjecture: sd P_n 2n-2}
For $n \geq 3$, we have
\begin{align*}
    \sd(P_n) = \begin{cases} 2n-3 & 3 \leq n \leq 6, \\ 2n-2 & n \geq 7. \end{cases}
\end{align*}
\end{conjecture}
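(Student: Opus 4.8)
The bounds $2n-3 \le \sd(P_n) \le 2n-2$ are already established in \cref{proposition: improved P_n sd bounds}, and the values for $3 \le n \le 6$ follow from the exhaustive computer search together with the lower bound $\sd(P_n) \ge 2n-3$ of \cref{proposition: trivial P_n uspum bounds}. Thus the whole content of the conjecture reduces to the single inequality $\sd(P_n) \ge 2n-2$ for $n \ge 7$, i.e., to ruling out $\sd(P_n) = 2n-3$. My plan is to mirror the argument of \cref{theorem: spum path lower bound}, which excludes $\spum(P_n)=2n-3$, in the sum-diameter setting. Crucially, this cannot be deduced from the spum result: since $\sd(P_n)\le\spum(P_n)$, the spum lower bound points the wrong way, and a genuinely new argument is needed to exclude the \emph{smaller} value.

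First I would assume for contradiction that $\sd(P_n)=2n-3$ and fix an optimal labeling $L$ with path labels $S=\{a_1<\cdots<a_n\}$. By \cref{remark: a1 = min L sd}, $a_1=\min L$, so $\max L=a_1+2n-3$, and since $P_n$ has $\Delta-\delta=1$, \cref{remark: sd lower bound equality a1-2a1} forces $[a_1,2a_1]\subseteq S$. Two ingredients of the spum proof survive unchanged, because they invoke only the degree-$\le 2$ condition and this structural interval, never the number of isolated vertices: the neighbor-counting at $a_1$ and $2a_1$ forces $\max L\ge 4a_1+2$, hence $a_1\le\tfrac{2n-5}{3}$; and the largest path vertex $a_n$ needs a neighbor $w\in S$ with $a_n+w\in L$, so $a_n+a_1\le a_n+w\le\max L=a_1+2n-3$, giving the clean bound $a_n\le 2n-3$. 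These supply the upper half of the desired window on $a_1$.

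The crux is the matching lower bound $a_1\ge n-7$ (or any bound $a_1\ge n-O(1)$), which in the spum case is \cite[Claim 2]{singla2021some}, and this step is where I expect the main obstacle to lie. The spum argument exploits $\sigma(P_n)=1$, so that $L$ consists of \emph{exactly} $n+1$ labels filling all but $n-3$ of the integers of $[a_1,\max L]$; this rigidity is what drives the counting. In the sum-diameter setting the number of isolated vertices is unconstrained, so $|L|$ may lie anywhere in $[n,2n-2]$ and the extra labels could, a priori, be positioned to dodge the contradiction. The plan is to show this apparent freedom is illusory by controlling two regions simultaneously and uniformly in $|L|$: in the low range, any label in $[2a_1+1,3a_1]$ forces an edge incident to $a_1$ (its difference with $a_1$ lies in $[a_1,2a_1]\subseteq S$), and likewise near $2a_1$, so the degree-$2$ bound keeps the low range as sparse as in the spum case; while in the high range, every edge incident to the largest path vertices needs a sum-label exceeding $a_n$, and the supply of such labels is throttled by $a_n\le 2n-3$ together with $\max L=a_1+2n-3$. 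Making both constraints cohere so that $a_1$ is pushed up regardless of how many isolated labels are inserted is the technical heart, and the reason the statement remains a conjecture rather than a theorem.

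Once both bounds $n-7\le a_1\le\tfrac{2n-5}{3}$ are in place, the window is empty for all $n$ past an explicit threshold (around $n\ge 17$, exactly as in \cref{theorem: spum path lower bound}), disposing of those $n$ at once. For the finitely many remaining values $7\le n\le 16$, the same bounds pin $a_1$ to a short interval and bound $\max L=a_1+2n-3$, so $L$ is a subset of a bounded interval containing the block $[a_1,2a_1]$; an exhaustive search over the few admissible label sets—already carried out for $7\le n\le 13$—would then complete the proof.
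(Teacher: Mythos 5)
You should note first that this statement is a \emph{conjecture} in the paper: there is no proof to compare against. What the paper supplies is exactly the two-sided bound $2n-3 \leq \sd(P_n) \leq 2n-2$ (\cref{proposition: improved P_n sd bounds}), computer verification for $3 \leq n \leq 13$, and a one-sentence remark that for $n \geq 7$ one should assume $\sd(P_n)=2n-3$, invoke \cref{remark: sd lower bound equality a1-2a1} to obtain $[a_1,2a_1]\subseteq S$, and seek a contradiction. Your reduction of the conjecture to the single inequality $\sd(P_n)\geq 2n-2$ for $n\geq 7$, and your opening moves ($a_1=\min L$ via \cref{remark: a1 = min L sd}, the interval $[a_1,2a_1]\subseteq S$, the bound $a_n \leq 2n-3$), coincide with the paper's intended line of attack. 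You are also right, and commendably explicit, about where the genuine obstruction sits: the analogue of $a_1 \geq n-7$ (\cite[Claim 2]{singla2021some}) is the missing step, and its spum proof really does use $\sigma(P_n)=1$, i.e., that $L$ has exactly $n+1$ elements so that $[a_1,\max L]$ has exactly $n-3$ gaps; once arbitrarily many isolated labels are allowed, that counting collapses. Since you do not close this gap, what you have is a correct problem analysis rather than a proof---which is consistent with the statement remaining open in the paper.

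One further caution: your claim that the upper half of the window, $a_1 \leq \tfrac{2n-5}{3}$, ``survives unchanged'' is too quick. In \cref{theorem: spum path lower bound} that bound is derived only after the cases $a_1=n-2$ and $a_1=n-3$ are eliminated (the counting needs $|S\cap[a_1,2a_1]| = a_1+1 \leq n-3$, i.e., $a_1 \leq n-4$), and those eliminations again exploit the exactly-one-isolated-vertex structure: they begin by pinning the unique isolated label to $b = 3n-5$ or $b=3n-6$ and then argue that specific sums are absent from $L$. With extra isolated vertices permitted, such labels may be present, so these cases need new arguments in the sum-diameter setting. They do appear salvageable---for instance, when $a_1=n-2$ one still gets $S=[n-2,2n-3]$, and then either $3n-6\in L$, forcing $a_1$ to have three neighbors, or $3n-6\notin L$, forcing the component of $a_1$ to be a path on only four vertices---but this must be argued case by case, not cited. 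So both halves of the window on $a_1$, not just the lower one, are sensitive to the loss of control over $|L|$.
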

In order to address the general case of $n \geq 7$, it suffices to assume $\sd(P_n)=2n-3$, which allows one to use \cref{remark: sd lower bound equality a1-2a1}, and reach a contradiction.

This yields a far more efficient bound for $\iuspum(P_n)$ than the bounds for $\ispum(P_n)$.
\begin{proposition}\label{proposition: trivial P_n iuspum bounds}
For $n \geq 3$, we have
\begin{align*}
    2n-5 \leq \iuspum(P_n) \leq \begin{cases} 2n-2 & \text{if } n \text{ is odd} \\ 2n-3 & \text{if } n \text{ is even}. \end{cases}
\end{align*}
\end{proposition}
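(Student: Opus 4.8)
The lower bound is immediate: for every $n \geq 3$ the path $P_n$ has no isolated vertices and maximum degree $\Delta = 2$, so \cref{lemma: iuspum lower bound degrees} gives $\iuspum(P_n) \geq 2n - \Delta - 3 = 2n - 5$. It remains to establish the two upper bounds, and the plan is to chain together the relations among $\iuspum$, $\sd$, and $\ispum$ already recorded in the excerpt.

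For odd $n$ I would simply combine the available inequalities: by \cref{lemma: iuspum <= uspum} and \cref{proposition: improved P_n sd bounds}, $\iuspum(P_n) \leq \sd(P_n) \leq 2n - 2$, which is exactly the claimed odd-case bound. Note that here the integral relaxation buys nothing beyond the positive-integer construction of \cref{proposition: improved P_n sd bounds}; the bound $2n-2$ is already attained over the positive integers.

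For even $n$ the target $2n - 3$ is one better than the generic bound $\sd(P_n) \leq 2n - 2$, so I would split into small and large cases. For $n \in \{4, 6\}$, \cref{theorem: spum path best bounds} gives $\spum(P_n) = 2n - 3$, whence $\iuspum(P_n) \leq \sd(P_n) \leq \spum(P_n) = 2n - 3$. For even $n \geq 8$ (i.e. $n \geq 7$, the range in which \cref{eq: ispum bound P_n} is valid), I would instead pass through the integral spum: combining $\iuspum(P_n) \leq \ispum(P_n)$ (see \cref{fig:spum_relations}) with the even-case upper bound $\ispum(P_n) \leq 2n - 3$ of \cref{eq: ispum bound P_n} yields $\iuspum(P_n) \leq 2n - 3$. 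Together these two subcases cover all even $n \geq 4$.

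The only genuine content is the even case, and the obstacle is precisely that the positive-integer bound $\sd(P_n) \leq 2n - 2$ of \cref{proposition: improved P_n sd bounds} falls one short of $2n - 3$, so the improvement must come from elsewhere: from the exact values $\sd(P_n) = 2n-3$ at $n \in \{4,6\}$ supplied by \cref{theorem: spum path best bounds}, and from the integral construction underlying \cref{eq: ispum bound P_n}, which exploits negative labels, for even $n \geq 8$. Should one prefer a uniform, self-contained argument in place of citing \cref{eq: ispum bound P_n}, the task reduces to exhibiting a single integer labeling of range $2n - 3$ inducing $P_n$ for all even $n$; its only delicate point is verifying that no unwanted pair of labels sums to a third label, and it is exactly this check that forces the parity distinction between the odd bound $2n-2$ and the even bound $2n-3$.
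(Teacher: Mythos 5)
Your proof is correct and follows essentially the same route as the paper: the identical lower bound via \cref{lemma: iuspum lower bound degrees}, the identical odd-case chain $\iuspum(P_n) \leq \sd(P_n) \leq 2n-2$ via \cref{lemma: iuspum <= uspum} and \cref{proposition: improved P_n sd bounds}, and the identical even-case chain $\iuspum(P_n) \leq \ispum(P_n) \leq 2n-3$ from \cref{eq: ispum bound P_n} for even $n \geq 8$. The only divergence is the subcase $n \in \{4,6\}$: you cite $\spum(P_n) = 2n-3$ from \cref{theorem: spum path best bounds} and chain $\iuspum(P_n) \leq \sd(P_n) \leq \spum(P_n)$, whereas the paper exhibits explicit integral labelings $L = \{-1,1,2,3\}$ and $L = \{-1,1,3,4,6,7\}$, which give the slightly stronger bound $\iuspum(P_n) \leq 2n-4$ for these two values of $n$. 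Both dispatch the subcase; yours requires no new construction, while the paper's yields a marginally better constant there.
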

\begin{proof}
The lower bound follows from \cref{lemma: iuspum lower bound degrees}.
The upper bound for the odd case follows by applying \cref{lemma: iuspum <= uspum} to \cref{proposition: improved P_n sd bounds}, and the upper bound for the even case when $n \geq 8$ follows from \cref{eq: ispum bound P_n} and the fact that $\iuspum(P_n) \leq \ispum(P_n)$ for all $n$.
For $n\in\{4,6\}$, we have $\isd(P_n) \leq \ispum(P_n) \leq 2n-4 < 2n-3$ by using $L=\{-1,1,2,3\}$ and $L=\{-1,1,3,4,6,7\}$.
\end{proof}

\section{Sum-diameter under various binary graph operations}\label{section: sd binary operations}

In this section we discuss the behavior of the sum-diameter under various binary graph operations.

First, we prove the following lemma, which will be frequently used.
\begin{lemma}\label{lemma: max L bounded 2 sd}
Let $G$ be a graph with no isolated vertices.
Then for a labeling $L$ that induces $G$ along with some additional isolated vertices and achieves $\range(L)=\sd(G)$, we have $\max L \leq 2 \sd(G) - 1$.
\end{lemma}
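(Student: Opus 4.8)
The plan is to exploit the same structural observation that appears repeatedly in the earlier proofs (for instance in \cref{remark: a1 = min L sd} and in the proof of \cref{theorem: existence quadratic sd}): for an optimal labeling, the smallest label of $G$ must coincide with the smallest label of the entire labeling, and this vertex must have a neighbor. First I would let $S \subset L$ denote the labels assigned to the vertices of $G$, enumerate $S = \{a_1, \dots, a_n\}$ in increasing order, and invoke \cref{remark: a1 = min L sd} to conclude that $a_1 = \min S = \min L$. Since $G$ has no isolated vertices, the vertex labeled $a_1$ has at least one neighbor, so there is some $a_i$ with $a_1 + a_i \in L$.

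Next I would chain the inequalities: because $a_1 + a_i$ is itself a label, it is at most $\max L$, so
\[
\max L \geq a_1 + a_i \geq a_1 + a_2 > 2a_1,
\]
where the middle inequality uses $a_i \geq a_2$ (as $a_i \neq a_1$) and the last uses $a_2 > a_1$. Writing $\max L = a_1 + \range(L) = a_1 + \sd(G)$, this gives $a_1 + \sd(G) > 2a_1$, hence $a_1 < \sd(G)$, i.e. $a_1 \leq \sd(G) - 1$. Substituting back,
\[
\max L = a_1 + \sd(G) \leq (\sd(G) - 1) + \sd(G) = 2\sd(G) - 1,
\]
which is exactly the claimed bound.

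This argument is essentially identical in spirit to the computation already carried out inside the proof of \cref{theorem: existence quadratic sd}, so there is no real obstacle here; the only point requiring minor care is justifying that $a_1$ genuinely has a neighbor, which follows immediately from the hypothesis that $G$ has no isolated vertices together with the fact that the only isolated vertices in the induced sum graph are the added ones (whose labels lie outside $S$). I would state this explicitly to make the logic airtight. The entire proof is short and self-contained, relying only on \cref{remark: a1 = min L sd} and the definition of $\range$ and $\sd$.
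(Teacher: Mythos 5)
Your proof is correct and follows essentially the same route as the paper's own proof: both use the minimality of $\range(L)$ to get $a_1 = \min L$, use the fact that $a_1$ has a neighbor in $G$ to produce a label $a_1 + a_i \leq \max L = a_1 + \sd(G)$, and deduce $a_1 \leq \sd(G) - 1$, hence $\max L \leq 2\sd(G) - 1$. Your explicit appeal to \cref{remark: a1 = min L sd} is if anything slightly more careful than the paper, which uses $\max L = a_1 + \sd(G)$ without comment.
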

\begin{proof}
Suppose the labels of $G$ in $L$ are $S$, so that $L\setminus S$ are the labels of the additional isolated vertices.
Let $S=\{a_1,\dots,a_n\}$ where we sort $a_1 < \cdots < a_n$.
As $G$ has no isolated vertices, $a_1$ must be adjacent to some other vertex $a_i$ for some $i \geq 2$, which means $a_1 + a_i \in L$, and thus $a_1 + a_i \leq \max L = a_1 + \sd(G)$, so $a_i \leq \sd(G)$.
As $a_1 \leq a_i - (i-1) \leq \sd(G)-1$, we find $\max L = a_1 + \sd(G) \leq 2\sd(G)-1$.
\end{proof}

\subsection{Disjoint union}

Let $G_1\cup G_2$ denote the disjoint union of graphs $G_1$ and $G_2$.
The following result provides an upper bound on $\sd(G_1 \cup G_2)$ given $\sd(G_1)$ and $\sd(G_2)$.

\begin{theorem}\label{theorem: sd disjoint union upper bound}
Suppose $G_1$ and $G_2$ are two graphs with no isolated vertices.
Then
\begin{align}\label{eq: sd disjoint union upper bound}
    \sd(G_1\cup G_2) \leq 2(2\sd(G_1)-1)(2\sd(G_2)-1) - 1.
\end{align}
\end{theorem}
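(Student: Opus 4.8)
The plan is to glue together optimal labelings of $G_1$ and $G_2$ by placing them on two widely separated multiplicative scales, so that no sum involving the two parts can land where it would create a spurious edge. Write $d_i = \sd(G_i)$ and $M_i = 2d_i - 1$, and fix labelings $L_1, L_2$ with $\range(L_i) = d_i$ that induce $G_i$ together with isolated vertices. Since all labels are positive integers, $\min L_i \geq 1$, and by \cref{lemma: max L bounded 2 sd} we have $\max L_i \leq M_i$, so $L_i \subseteq [1, M_i]$. I would then set $N = 2M_1$ and define $L = L_1 \cup (N L_2)$. Note $L_1 \subseteq [1, M_1]$ while $N L_2$ consists of multiples of $N$ lying in $[2M_1, \infty)$, and since $M_1 < 2M_1$ these two blocks are disjoint, so $L$ is a set of distinct positive integers; since $G_1 \cup G_2$ has no isolated vertices, $L$ is a legitimate witness for $\sd(G_1 \cup G_2)$.

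The core of the argument is to check that the induced sum graph of $L$ is exactly $G_1 \cup G_2$ plus isolated vertices, which I would do by a three-case analysis sorted by which blocks the two summands come from. For a pair $x, y \in L_1$, distinctness forces $x + y \leq (M_1 - 1) + M_1 = 2M_1 - 1 < N$, so $x+y$ lies strictly below the block $N L_2$; hence $x + y \in L$ if and only if $x + y \in L_1$, which recovers exactly the edges of $G_1$ and keeps every isolated label of $L_1$ isolated. For a pair $x, y \in N L_2$, the sum is a multiple of $N$ exceeding $M_1$, so $x + y \in L$ if and only if $x + y \in N L_2$; since multiplication by $N$ is a bijection preserving the sum relation, this reproduces exactly the edges (and isolated labels) of $G_2$. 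Finally, for a mixed pair $x \in L_1$, $y \in N L_2$, we have $x + y \equiv x \not\equiv 0 \pmod N$ because $1 \leq x \leq M_1 < N$, and also $x + y > N > M_1 \geq \max L_1$; thus $x + y$ lies in neither block and $x+y \notin L$, so there are no edges between the $G_1$ part and the $G_2$ part.

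With the structure confirmed, the range bound is immediate: $\min L = \min L_1 \geq 1$ (as $N \min L_2 \geq N > M_1 \geq \max L_1$) while $\max L = N \max L_2 \leq 2M_1 M_2$, giving $\range(L) \leq 2M_1 M_2 - 1 = 2(2\sd(G_1)-1)(2\sd(G_2)-1) - 1$. The one subtlety that must not be glossed over — and the reason the constant comes out exactly as stated rather than with an extra additive term — is the choice $N = 2M_1$ instead of $2M_1 + 1$. This is justified precisely because the largest sum of two \emph{distinct} labels in $[1, M_1]$ is $2M_1 - 1 < N$; a coarser argument that demanded the whole interval of possible sums $[2, 2M_1]$ sit below $N$ would force $N \geq 2M_1 + 1$ and lose an additive $M_2$ in the final estimate. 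Everything else is routine modular bookkeeping.
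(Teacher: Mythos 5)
Your proposal is correct and takes essentially the same approach as the paper: both proofs scale $L_2$ by the factor $2(2\sd(G_1)-1)=4\sd(G_1)-2$, take the union with $L_1$, verify via the same three-case analysis (sums within $L_1$, within the scaled copy, and mixed) that the induced sum graph is exactly $G_1\cup G_2$ plus isolated vertices, and extract the identical range bound. The only cosmetic difference is that you rule out mixed edges with a congruence argument modulo the scaling factor, while the paper notes that two distinct elements of the scaled copy differ by at least that factor, which exceeds $\max L_1$.
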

\begin{proof}
Consider a labeling $L_1$ that induces $G_1$ (along with some isolated vertices) while achieving $\range(L_1)=\sd(G_1)$ and likewise a labeling $L_2$ that induces $G_2$ while achieving $\range(L_2)=\sd(G_2)$.
From \cref{lemma: max L bounded 2 sd}, we know $\max L_1 \leq 2\sd(G_1)-1$ and $\max L_2 \leq 2\sd(G_2)-1$.

Now consider the labeling $L_2' = (4\sd(G_1)-2)L_2$, i.e., every label in $L_2$ is multiplied by $4\sd(G_1) - 2$.
We claim $L_1\cup L_2'$ induces a labeling consisting of $G_1 \cup G_2$ along with some isolated vertices.
Notice that scaling a labeling does not change its induced sum graph, so $L_2'$ induces $G_2$ and $L_1$ induces $G_1$.
Hence it suffices to show that no additional edges exist between $L_1$ and $L_2'$, and no additional edges exist between the vertices in $L_1$ due to the presence of $L_2'$ or vice versa.

As $\sd(G_1) \geq 1$, we have $4\sd(G_1) - 2 \geq 2\sd(G_1) > 2\sd(G_1) - 1$.
So,
\[ \max L_1 \leq 2\sd(G_1)-1 < 4\sd(G_1) - 2 \leq \min L_2'.\]
If an edge existed between $L_1$ and $L_2'$, suppose this edge is between $x_1 \in L_1$ and $y_1 \in L_2'$.
Then $x_1 + y_1 \in L_2'$, so suppose $x_1 + y_1 = y_2$ for $y_2 \in L_2'$.
As $x_1 > 0$ we have $y_2 > y_1$, and in particular $x_1 = y_2 - y_1 \geq 4\sd(G_1)-2 > 2\sd(G_1) - 1$, but $x_1 \leq \max L_1 \leq 2\sd(G_1)-1$, a contradiction.
Thus no edge exists between $L_1$ and $L_2'$.

Clearly no additional edge between two vertices in $L_2'$ exists due to the presence of $L_1$, as $\min L_2' > \max L_1 > 0$.
It remains to show that no additional edge between two vertices in $L_1$ exists due to the presence of $L_2'$.
Suppose otherwise, that there exist distinct $x_1,x_2 \in L_1$ such that $x_1 + x_2 \in L_2'$.
However, we find
\[ x_1 + x_2 \leq 2\max L_1 - 1 \leq 4\sd(G_1) - 3 < 4\sd(G_1) - 2 \leq \min L_2',\]
so $x_1 + x_2\not\in L_2'$, and thus no such edge exists.

We now evaluate $\range(L_1 \cup L_2')$.
As $\max L_1 < \min L_2'$, we have
\begin{align*}
    \range(L_1 \cup L_2') &= (4\sd(G_1) - 2)\max L_2 - \min L_1
    \leq (4\sd(G_1)-2)(2\sd(G_2)-1) - 1
    \\ &= 2(2\sd(G_1)-1)(2\sd(G_2)-1) - 1.
\end{align*}

Hence, $\sd(G_1\cup G_2) \leq 2(2\sd(G_1)-1)(2\sd(G_2)-1) - 1$.
\end{proof}
\begin{remark}\label{remark: sd disjoint union upper bound proof doesn't work for spum}
The proof of \cref{theorem: sd disjoint union upper bound} cannot be modified to work for spum, as while the proof shows $\sigma(G_1 \cup G_2) \leq \sigma(G_1)+\sigma(G_2)$, without knowing the precise value of $\sigma(G_1 \cup G_2)$, this construction may not be a valid labeling for $\spum(G_1 \cup G_2)$ if it does not use the minimum number of additional vertices necessary, i.e., if $\sigma(G_1 \cup G_2) < \sigma(G_1) + \sigma(G_2)$.

If we had $\sigma(G_1 \cup G_2) = \sigma(G_1)+\sigma(G_2)$, then one could replace all of the sum-diameters in \cref{eq: sd disjoint union upper bound} with spums, as the same proof would hold.

This is a third reason, in addition to the ones provided in Section \ref{section: sum-diameter} and \ref{section: isd cycle path}, as to why sum-diameter is arguably a better property to study than spum.
\end{remark}

The previous result relies on the fact that the sum graph structure is invariant under scaling of the labelings.
While the sum graph structure is not invariant under translation, the following lemma demonstrates translation can be a useful construction method to enforce additional structure on a labeling.

\begin{lemma}\label{lemma: translation lemma}
Let $G$ be a graph with no isolated vertices, and let $L$ be a labeling that induces $G$ along with some additional isolated vertices that achieves $\range(L)=\sd(G)$.
Let $S\subset L$ be the set of labels associated with $G$ itself, and let $T$ be the set of labels $c \in L$ such that there exist distinct $a,b \in L$ where $a+b=c$.
Then for any $x \geq \sd(G) - 1 - \min L$, let $L' = (S+x)\cup(T+2x)$.
Then $L'$ induces $G$ along with some additional isolated vertices, so that if for any $c\in L'$ there exist distinct $a,b \in L'$ with $a+b=c$, then $c$ is an isolated vertex.
\end{lemma}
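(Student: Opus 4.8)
The plan is to track the two translated blocks $S + x$ and $T + 2x$ of $L'$ separately and show that the sum graph they induce is exactly $G$ (carried on $S+x$) together with the isolated set $T + 2x$, with every ``sum-witness'' forced into the latter block. Throughout I would write $m = \min L$ and $M = \max L = m + \sd(G)$, and rely on three facts. First, since $L$ consists of positive integers, the largest label of a sum graph is isolated, so $\max S \le M - 1$. Second, any two distinct elements of $L$ sum to at least $2m+1$, and as $T \subseteq L$ this gives $T \subseteq L \cap [2m+1, M]$; also $\min S = m$ by \cref{remark: a1 = min L sd}. Third, the hypothesis $x \ge \sd(G) - 1 - m$ is exactly the inequality $2m + 1 + x \ge M$, which is the single threshold every estimate below will exploit.

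First I would check that $S + x$ lies strictly below $T + 2x$: we have $\max(S+x) \le (M-1)+x$ and $\min(T+2x) \ge (2m+1)+2x$, and the bound on $x$ yields $(M-1)+x < (2m+1)+2x$, so the two blocks are disjoint and $L'$ is a legitimate positive labeling. I would then classify edges of the induced sum graph of $L'$ by which blocks the endpoints occupy. The essential case is two vertices $a+x, b+x \in S+x$ with $a,b \in S$ distinct: adjacency means $a+b+2x \in L'$, and the spurious possibility $a+b+x \in S$ is ruled out because $a+b+x \ge (2m+1)+x \ge M > \max S$. Hence adjacency reduces to $a+b \in T$, which (using $T \subseteq L$) is equivalent to $a+b \in L$, i.e.\ to adjacency in $G$. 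So the induced subgraph on $S+x$ is precisely $G$.

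Next I would dispose of every edge with an endpoint in $T+2x$ by a cruder size comparison: two vertices of $T+2x$ sum to at least $4m+2+4x$, and one vertex from each block sums to at least $3m+1+3x$, both of which exceed $M + 2x \ge \max L'$ once $2m+1+x \ge M$ is invoked; thus $T+2x$ is an isolated set and the sum graph of $L'$ is $G$ together with the isolated vertices $T+2x$. For the final sum-witness claim, suppose some $c = a+x \in S+x$ satisfied $c = \alpha+\beta$ for distinct $\alpha,\beta \in L'$. If either summand lay in $T+2x$, the sum would be at least $3m+1+3x > (M-1)+x \ge a+x$, so both lie in $S+x$, say $\alpha = a'+x$ and $\beta = b'+x$; but then $a = a'+b'+x \ge (2m+1)+x \ge M > \max S$, contradicting $a \in S$. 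Hence no vertex of $S+x$ is a sum-witness, so every sum-witness of $L'$ lies in the isolated block $T+2x$, as required.

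The main obstacle is the edge analysis within $S+x$: one must exclude the spurious adjacency arising when $a+b+x \in S$, and this is exactly the step where the combination of ``the maximum label is isolated'' (giving $\max S \le M-1$) with the threshold $2m+1+x \ge M$ is indispensable. All the remaining cases are routine magnitude bounds, and the translation by $2x$ on the witness block is engineered precisely so that these bounds go through while the vertex block $S+x$ sits safely below every sum.
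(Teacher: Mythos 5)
Your proof is correct and follows essentially the same approach as the paper's: translating $S$ by $x$ and $T$ by $2x$, using the threshold $2\min L + 1 + x \geq \max L$ to separate the blocks, reducing adjacency within $S+x$ to adjacency in $G$, and killing all other edges by magnitude bounds. If anything, your write-up is slightly more explicit than the paper's, since you spell out the exclusion of the spurious case $a+b+x \in S$ and verify the final sum-witness claim directly, both of which the paper leaves implicit.
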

\begin{proof}
If we originally had an edge relation of the form $a+b=c$ in $L$, as $a,b \in S$ and $c \in T$, then we still have this edge relation in $L'$, for $(a+x)+(b+x)=c+2x$.
From \cref{lemma: max L bounded 2 sd} we know $L\subseteq[\min L, \min L + \sd(G)]$, and as $\max L$ must be isolated, we have $\max L \not\in S$, so $S \subseteq [\min L, \min L + \sd(G)-1]$.
Hence $S+x \subseteq [\min L + x, \min L + \sd(G) + x - 1]$.
Similarly, $T \subseteq [2\min L + 1, \min L + \sd(G)]$ yields $T+2x \subseteq [2\min L + 1 + 2x, \min L + \sd(G) + 2x]$.
Notice that $\max (S+x) < \min (T+2x)$ as $x \geq \sd(G)-1-\min L$.
We have
\[ \range(T+2x)\leq \sd(G)-\min L - 1 < \sd(G), \]
so there are no edges between $S+x$ and $T+2x$ in the induced sum graph of $L'$.
There are no edges between two elements of $T+2x$ as the sum of any two such elements strictly exceeds $\max L' = \max T + 2x$, again using the assumption that $x \geq \sd(G)-1-\min L$.
Two elements of $S+x$ are adjacent in the induced sum graph of  $L'$ if and only if their corresponding elements in $S$ are adjacent in the induced sum graph of $L$.
The sum of two adjacent elements in $S+x$ must be in $T+2x$, by construction.
Hence, we have the desired result.
\end{proof}

Notice that this translation construction separates $S$ and $T$, for $\max(S+x)<\min(T+2x)$.
As $S+x$ corresponds to the vertices of $G$ while $T+2x$ consists of isolated vertices that induce the desired edges in $G$, we will often refer to $S+x$ as the set of ``vertex labels" and $T+2x$ as the set of ``edge labels."

The translation method of \cref{lemma: translation lemma} implies the following result.

\begin{lemma}\label{lemma: translation separation sd double}
Let $G$ be a graph with no isolated vertices.
The minimum range of a labeling $L'$ of $G$ that only has edges of the form $(u,v)$ where $u+v$ is an isolated vertex is at most $2\sd(G)-2$.
\end{lemma}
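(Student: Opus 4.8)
The plan is to invoke the translation construction of \cref{lemma: translation lemma} directly, using the smallest admissible shift. First I would observe that since $G$ has no isolated vertices it contains at least one edge, so $\sd(G) \geq 2$ by \cref{lemma: uspum lower bound degrees}; this fact will later guarantee positivity of the labels. I would then start from a labeling $L$ that induces $G$ together with isolated vertices and achieves $\range(L) = \sd(G)$, write $S \subset L$ for its vertex labels and let $T$ be the associated set of edge labels defined as in \cref{lemma: translation lemma}, recalling from \cref{remark: a1 = min L sd} that $\min S = \min L$.

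Next I would apply \cref{lemma: translation lemma} with the minimal allowed shift $x = \sd(G) - 1 - \min L$, producing $L' = (S+x) \cup (T+2x)$. By that lemma, $L'$ induces $G$ along with additional isolated vertices, and whenever $a + b = c$ holds for distinct $a,b \in L'$ the label $c$ is an isolated vertex. This is precisely the requirement that every edge $(u,v)$ of $G$ in this labeling have $u+v$ an isolated vertex, so $L'$ has the desired structure, and it remains only to bound its range.

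For the range computation, \cref{lemma: translation lemma} provides $\max(S+x) < \min(T+2x)$, so the lower block gives $\min L' = \min(S+x) = \min L + x$ while the upper block gives $\max L' = \max T + 2x$. Using $\max T \leq \max L = \min L + \sd(G)$, I would obtain
\[ \range(L') = \max T + 2x - (\min L + x) \leq \sd(G) + x = 2\sd(G) - 1 - \min L \leq 2\sd(G) - 2, \]
where the final inequality uses $\min L \geq 1$. I expect no genuine obstacle here, as the argument is essentially bookkeeping once \cref{lemma: translation lemma} is available; the only points requiring care are checking that the minimal shift meets the hypothesis $x \geq \sd(G) - 1 - \min L$ of that lemma (it does, with equality) and that it keeps all labels positive, which holds since $\min L' = \min L + x = \sd(G) - 1 \geq 1$ whenever $\sd(G) \geq 2$.
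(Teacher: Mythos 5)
Your proposal is correct and follows essentially the same route as the paper's own proof: both apply \cref{lemma: translation lemma} with the minimal shift $x = \sd(G) - 1 - \min L$ and then bound $\range(L') = \max T - \min S + x \leq \sd(G) + x = 2\sd(G) - 1 - \min L \leq 2\sd(G) - 2$. Your additional check that the shifted labels remain positive (via $\min L' = \sd(G) - 1 \geq 1$) is a point the paper leaves implicit, and is a worthwhile inclusion since $x$ can be negative.
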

\begin{proof}
Let $L$ be a labeling that induces $G$ along with some additional isolated vertices that achieves $\range(L)=\sd(G)$.
We use the construction of \cref{lemma: translation lemma} with $x = \sd(G) - 1 - \min L$ to yield $L'$.
Notice that $L'$ satisfies the required condition, and we observe
\begin{align*}
    \range(L) &= \max L - \min L = \max T - \min S + x \leq \min L + \sd(G) - \min L + x
    \\ &= \sd(G) + x = 2\sd(G) - 1 - \min L \leq 2\sd(G) - 2,
\end{align*}
as desired.
\end{proof}

The following result now uses translation for a more efficient bound on $\sd(G_1\cup G_2)$ than that provided in \cref{theorem: sd disjoint union upper bound}, though it may use more than $\sigma(G_1)+\sigma(G_2)$ isolated vertices, unlike the previous proof.

\begin{theorem}\label{theorem: sd disjoint union upper bound translation improvement}
Suppose $G_1$ and $G_2$ are two graphs with no isolated vertices.
Then
\begin{align*}
    \sd(G_1 \cup G_2) \leq 10\max\{\sd(G_1),\sd(G_2)\} + \sd(G_1) + \sd(G_2) + 2.
\end{align*}
\end{theorem}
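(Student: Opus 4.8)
The plan is to overlay two labelings produced by the translation construction of \cref{lemma: translation lemma}, placed at two different heights so that the copies of $G_1$ and $G_2$ cannot interact. Set $M=\max\{\sd(G_1),\sd(G_2)\}$ and, relabeling if necessary, assume $\sd(G_1)\ge\sd(G_2)$.

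My first step is to record the flexibility hidden in \cref{lemma: translation lemma}. Starting from an optimal labeling $L_i$ of $G_i$ (so $\range(L_i)=\sd(G_i)$) and applying the lemma, one obtains a labeling split into a low block of \emph{vertex labels} $V_i$ and a high block of \emph{edge labels} $E_i$, whose only edges are vertex-vertex pairs summing to an isolated edge label. The shift $x$ in the lemma is free, and raising it moves $V_i$ up by that amount and $E_i$ up by twice as much; tracking the intervals for $S$ and $T$ in the proof of \cref{lemma: translation lemma} (and using \cref{lemma: max L bounded 2 sd} to bound the widths), one sees that for any chosen height $\alpha_i\ge\sd(G_i)-1$ the labeling can be arranged so that $V_i\subseteq[\alpha_i,\alpha_i+\sd(G_i)-1]$ and $E_i\subseteq[2\alpha_i+1,2\alpha_i+\sd(G_i)-1]$, with the induced sum-graph structure unchanged.

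Next I would form the disjoint union of these two labelings with heights $\alpha_1<\alpha_2$ chosen so the blocks occur in the order $V_1,V_2,E_1,E_2$ and so that every forbidden sum misses every label. The intended sums $V_1+V_1$ and $V_2+V_2$ automatically hit only $E_1$ and $E_2$ once the blocks are ordered this way, and \cref{lemma: translation lemma} already rules out sums of non-adjacent pairs within a single $G_i$. The genuinely new constraints are two: the cross sum $V_1+V_2$ must be forced into the gap strictly between $E_1$ and $E_2$, which amounts to $\alpha_2>\alpha_1+\sd(G_1)+\sd(G_2)-3$; and every sum involving an edge label must be pushed above the global maximum $\max E_2$. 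The latter is the main obstacle and is what forces $\alpha_1$ to be taken of size $\Theta(M)$: the smallest edge-involving sum is $\min V_1+\min E_1=3\alpha_1+1$, so it suffices to impose $3\alpha_1+1>\max E_2=2\alpha_2+\sd(G_2)-1$. These inequalities, together with the ordering constraints, admit an explicit integer solution with $\alpha_1=2\sd(G_1)+3\sd(G_2)-5$ and $\alpha_2=\alpha_1+\sd(G_1)+\sd(G_2)-2$, both $O(M)$.

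Finally I would verify directly that the combined label set induces exactly $G_1\cup G_2$ together with the isolated vertices $E_1\cup E_2$: the block ordering confines $V_1+V_1$ and $V_2+V_2$ to the correct edge blocks, the choice of $\alpha_2$ keeps $V_1+V_2$ in an empty gap, and the choice of $\alpha_1$ puts all $V+E$ and $E+E$ sums beyond $\max E_2$. It then remains to compute the range, $\range=\max E_2-\min V_1=2\alpha_2-\alpha_1+\sd(G_2)-1$; substituting the chosen heights gives $4\sd(G_1)+6\sd(G_2)-10$, which, since $\sd(G_1)\ge\sd(G_2)$, is at most $10M+\sd(G_1)+\sd(G_2)+2$, as claimed. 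The one point demanding care is that the edge blocks $E_i$ really do lie in the short intervals asserted (rather than filling the whole doubled vertex range), which is exactly the content of the interval $T\subseteq[2\min L_i+1,\min L_i+\sd(G_i)]$ from \cref{lemma: translation lemma}; this is what keeps all forbidden sums safely in gaps or above the maximum.
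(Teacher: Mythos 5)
Your proof is correct, and it is a genuine variant of the paper's argument rather than a reproduction of it. Both proofs run on the same two ingredients, \cref{lemma: max L bounded 2 sd} and the translation construction of \cref{lemma: translation lemma}, but the geometry is different. The paper stacks the blocks in the order $V_1,E_1,V_2,E_2$: it places all of $L_2'$ above $L_1'$ with $\min L_2' \geq 2\max L_1'$, so no sum of two labels of $L_1'$ can reach $L_2'$, and then separates $V_2$ from $E_2$ by a gap of size roughly $6\sd(G_1)$ so that no sum $L_1'+V_2$ can land in $E_2$; this yields $\range \leq 11\sd(G_1)+\sd(G_2)+2$. You instead interleave the blocks in the order $V_1,V_2,E_1,E_2$, force the cross sums $V_1+V_2$ into the empty gap between $E_1$ and $E_2$, and push every sum involving an edge label past the global maximum via $3\alpha_1+1 > \max E_2$. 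Your interval claims $V_i \subseteq [\alpha_i,\alpha_i+\sd(G_i)-1]$ and $E_i \subseteq [2\alpha_i+1,2\alpha_i+\sd(G_i)-1]$ do follow from the proof of \cref{lemma: translation lemma} together with $\min L_i \geq 1$, and your explicit heights satisfy all of the required inequalities (using $\sd(G_i)\geq 2$, which holds since the $G_i$ have no isolated vertices). The payoff is a sharper bound: $4\sd(G_1)+6\sd(G_2)-10 \leq 10\max\{\sd(G_1),\sd(G_2)\}-10$, which improves the theorem's constant; what the paper's coarser stacking buys in exchange is modularity, since the ``everything at least twice the previous maximum'' trick is reused unchanged when a fifth block $E_{12}$ is inserted for the join bound in \cref{theorem: sd join upper bound}, whereas your tighter interleaving would need its inequalities reworked there. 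One small point of looseness: your claim that the block ordering ``automatically'' confines $V_1+V_1$ to the correct edge block is not literally true, since the ordering $\max E_1 < \min E_2$ only gives $\alpha_2 > \alpha_1 + \tfrac{1}{2}\sd(G_1)-1$, while keeping $V_1+V_1$ below $\min E_2$ needs $\alpha_2 > \alpha_1+\sd(G_1)-2$; however, this stronger inequality is implied by your cross-sum constraint $\alpha_2 > \alpha_1+\sd(G_1)+\sd(G_2)-3$, so your chosen values do satisfy it and the proof stands.
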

\begin{proof}
As in \cref{theorem: sd disjoint union upper bound}, consider an optimal labeling $L_1$ that induces $G_1$ (along with some isolated vertices) and likewise an optimal labeling $L_2$ that induces $G_2$.
Let the labels associated with $G_1$ and $G_2$ themselves be $S_1\subset L_1$ and $S_2 \subset L_2$.
From \cref{lemma: max L bounded 2 sd}, we know $\max L_1 \leq 2\sd(G_1)-1$ and $\max L_2 \leq 2 \sd(G_2) - 1$.
This is equivalent to $\min L_1 \leq \sd(G_1)-1$ and $\min L_2 \leq \sd(G_2) - 1$.
Without loss of generality, assume $\sd(G_1) \geq \sd(G_2)$.

We apply the translation construction in \cref{lemma: translation lemma} using $x = \sd(G_1)+1-\min L_1$ to yield a labeling $L_1'$ that induces $G_1$.
We now have $\min L_1' = \sd(G_1)+1$.
As the vertex labels were originally in $[\min L_1, \min L_1 + \sd(G_1)-1]$, they are now in $[\sd(G_1)+1,2\sd(G_1)]$.
Similarly, the edge labels were originally in $[2\min L_1 + 1, \min L_1 + \sd(G_1)]$, so they are now in $[2\sd(G_1)+3,3\sd(G_1)+2-\min L_1]$.

We now address $G_2$, which we will incorporate by translating $L_2$ to be above $L_1'$.
We use the \cref{lemma: translation lemma} translation technique with $x=6\sd(G_1)+2-\min L_2 > \sd(G_2)-1-\min L_2$, to yield a labeling $L_2'$ that by itself induces $G_2$.
Notice that as $\max L_1' \leq 3\sd(G_1)+1$, as $\min L_2' = 6\sd(G_1)+2 \geq 2\max L_1'$, no additional edges are created between two vertices in $L_1'$ due to the inclusion of $L_2'$.
Using the same analysis as for $L_1'$, this puts the vertex labels in the interval $[6\sd(G_1)+2,6\sd(G_1)+\sd(G_2)+1]$, and puts the edge labels in the interval $[12\sd(G_1)+5,12\sd(G_1)-\min L_2 + \sd(G_2) + 4]$.

Finally, it remains to show that there are no edges between $L_1'$ and $L_2'$.
The maximum element of $L_1'$ is at most $3\sd(G_1)+1$, and the distance between any vertex label of $L_2'$ and an edge label of $L_2'$ is at least $6\sd(G_1)-\sd(G_2)+4 \geq 5\sd(G_1)+4$, so we never have $a+b=c$ for $a \in L_1'$, $b$ a vertex label of $L_2'$, and $c$ an edge label of $L_2'$.
The minimum element of $L_1'$ is $\sd(G_1)+1\geq \sd(G_2)+1$, which is larger than the range of the interval containing the vertex labels of $L_2'$ as well as the range of the interval containing the edge labels of $L_2'$, so we find that no edge exists between a label in $L_1'$ and a label in $L_2'$.

Thus, $L_1' \cup L_2'$ induces $G_1 \cup G_2$ along with some isolated vertices.
Hence,
\begin{align*}
    \sd(G_1 \cup G_2)
    &\leq \range(L_1'\cup L_2')
    \leq (12\sd(G_1)-\min L_2 + \sd(G_2)+4)-(\sd(G_1)+1) 
    \\ &\leq 11\sd(G_1) + \sd(G_2) + 2,
\end{align*}
and invoking our assumption that $\sd(G_1)\geq \sd(G_2)$ yields the result.
\end{proof}

See \cref{fig: sd disjoint union schematic} for a schematic diagram of the construction used in \cref{theorem: sd disjoint union upper bound translation improvement}.
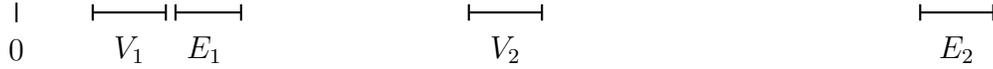
\begin{figure}[htbp!]
    \centering
    \begin{tikzpicture}[scale=1,baseline,thick]
        
        \draw (0,-0.13) -- (0,0.13);
        \node at (0,-0.5) {0};
        
        \draw[|-|] (1,0) -- (2,0);
        \node at (1.5,-0.5) {$V_1$};
        
        \draw[|-|] (2.1,0) -- (3,0);
        \node at (2.5,-0.5) {$E_1$};
        
        \draw[|-|] (6,0) -- (7,0);
        \node at (6.5,-0.5) {$V_2$};
        
        \draw[|-|] (12,0) -- (13,0);
        \node at (12.5,-0.5) {$E_2$};
    \end{tikzpicture}
    \caption{Schematic diagram of the \cref{theorem: sd disjoint union upper bound translation improvement} construction.
    $V_1$ and $E_1$ denote the intervals that contain the vertex and edge labels that induce $G_1$, and similarly $V_2$ and $E_2$ denote the intervals that contain the vertex and edge labels that induces $G_2$.}
    \label{fig: sd disjoint union schematic}
\end{figure}

\begin{remark}\label{remark: sd disjoint union upper bound asymptotic tightness}
The upper bound from \cref{theorem: sd disjoint union upper bound translation improvement} can be tight up to a constant factor.

To see this, consider the infinite family of graphs $C_n \cup C_m$ for $n,m \geq 4$.
From \cref{lemma: uspum lower bound degrees} we know $\sd(C_n \cup C_m) \geq 2(n+m)-2$.
For $n \geq 4$, by \cref{proposition: trivial C_n uspum bounds} we know $\sd(C_n) \leq 2n-1$, so $\sd(C_n \cup C_m) \geq 2(n+m)-2 \geq \sd(C_n)+\sd(C_m)$, and thus for this family of graphs, the upper bound is within a factor of 11 of the true sum-diameter value.
\end{remark}

\subsection{Adding isolated vertices}\label{subsection: sd adding isolated vertices}
\cref{theorem: sd disjoint union upper bound translation improvement} assumes $G_1$ and $G_2$ do not have isolated vertices.
For completeness, if one wishes to add isolated vertices to a graph $G$, the following upper bound holds.

\begin{theorem}\label{theorem: sd add isolated upper bound}
Suppose $G$ is a graph with no isolated vertices, and let $N_k$ be the empty graph on $k$ vertices, so that $G \cup N_k$ is the graph consisting of $G$ with $k$ additional isolated vertices.
Then
\begin{align*}
    \sd(G \cup N_k) \leq \max\{k,4\sd(G)\}+k-5.
\end{align*}
\end{theorem}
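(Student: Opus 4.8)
The plan is to take a labeling $L$ that achieves $\range(L)=\sd(G)$ and induces $G$ together with some isolated vertices, and then augment it with $k$ additional isolated vertices placed far enough to the right that they neither create new edges among themselves nor interfere with the already-induced structure of $G$. The natural tool is \cref{lemma: translation lemma}, which lets me separate the vertex labels $S+x$ from the edge labels $T+2x$ so that the only edges present are of the form $(u,v)$ with $u+v$ an isolated vertex. After this separation, \cref{lemma: max L bounded 2 sd} guarantees all of the original labels lie in an interval of length at most $2\sd(G)-1$, which controls where I can safely insert the new isolated vertices.

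First I would apply \cref{lemma: translation lemma} to obtain a labeling $L'$ whose only edges come from isolated ``edge labels''; by the computation in \cref{lemma: translation separation sd double}, this $L'$ can be taken with $\range(L') \leq 2\sd(G)-2$ and, after a suitable shift, with $\min L'$ comparable to $\sd(G)$. The key structural fact I need is that both the vertex labels and edge labels sit inside an interval whose width is $O(\sd(G))$, and that the maximum element $\max L'$ is at most roughly $4\sd(G)$ (coming from an edge label of the form $a+b$ where $a,b$ are vertex labels bounded by $2\sd(G)-1$). Once the existing configuration is confined to $[\,m, 4\sd(G)\,]$ for some small $m$, I would place the $k$ new isolated vertices as a consecutive block starting just above $\max L'$, at positions $\max L'+1, \max L'+2, \dots, \max L'+k$ (or I would interleave to minimize range). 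I must check two things: that no two of the new vertices are adjacent, i.e.\ the sum of any two of them exceeds the largest label, and that no sum of a new vertex with an old vertex lands on another label, which holds because the new block is pushed high enough that any such sum exceeds everything present.

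The range of the final labeling is then $\max L' + k - \min L'$, and substituting the bounds $\max L' \le 4\sd(G)$-ish and $\min L'$ near $1$ gives a bound of the shape $4\sd(G)+k - c$; the two branches of the $\max\{k,4\sd(G)\}$ in the statement reflect whether $k$ or $4\sd(G)$ dominates the placement of the new isolated vertices, since when $k$ is large the new vertices themselves must be spread out enough that pairwise sums exceed $\max L$, forcing a range on the order of $k$ regardless of $\sd(G)$. I would carry out the arithmetic carefully to land exactly on $\max\{k,4\sd(G)\}+k-5$, matching the constant.

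The main obstacle I expect is the case analysis in the $\max\{k,4\sd(G)\}$ term: when $k$ is small relative to $\sd(G)$, the dominant cost is $4\sd(G)$ and the new vertices tuck in cheaply above the existing labels, but when $k$ is large, I must ensure the block of $k$ new isolated vertices does not accidentally form edges among themselves, which requires their labels to be either spaced so that no two sum to a third or placed so every pairwise sum overshoots $\max L$; reconciling both regimes into the single clean bound $\max\{k,4\sd(G)\}+k-5$, and nailing the additive constant $5$, is the delicate part of the argument.
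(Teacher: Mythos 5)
Your proposal takes a genuinely different route from the paper, and it has a genuine gap. The paper's proof never invokes \cref{lemma: translation lemma}: it takes an optimal labeling $L$ of $G$ as-is, uses \cref{lemma: max L bounded 2 sd} to get $\max L \leq 2\sd(G)-1$, and adjoins $k-1$ (not $k$) consecutive new labels, starting at $4\sd(G)-2$ when $k \leq 4\sd(G)$ and at $k-2$ when $k > 4\sd(G)$. Only $k-1$ new labels are needed because $\max L$ is already an isolated vertex (this saves one unit in the constant), and the starting point is chosen to exceed $2\max L - 1$, i.e., to clear every pairwise sum of the existing labels.

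The concrete gap in your construction is the placement of the new block ``just above $\max L'$.'' Sums of two existing labels reach as high as $2\max L' - 1$, so they can land inside your block: two vertex labels that are \emph{not} adjacent in $G$, or a vertex label and an edge label, whose sum falls in $(\max L', \max L' + k]$ would acquire a spurious edge the moment the block is inserted, and the corresponding new label would fail to be isolated. Your checks cover (new $+$ old) and (new $+$ new) sums but not (old $+$ old) sums hitting new labels. Repairing this forces the block to start near $2\max L'$, and that is exactly where the translation step backfires: translation pushes edge labels up by $2x$, so after translating one only has $\max L' \leq 3\sd(G)-3$, and a block above $2\max L'$ yields a range of roughly $5\sd(G)+k$, overshooting the claimed $4\sd(G)+k-5$. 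The situation is worse in the regime $k > 4\sd(G)$: your old-new check (``any such sum exceeds everything present'') requires $\min L' \geq k$, which forces a translation of size about $k$ and a range of order $2\sd(G)+4k$, far above the claimed $2k-5$. This is also why your arithmetic is internally inconsistent: you correctly note that translation makes $\min L'$ comparable to $\sd(G)$, but then substitute ``$\min L'$ near $1$'' and ``$\max L' \leq 4\sd(G)$-ish,'' numbers that describe the \emph{untranslated} labeling the paper actually uses. To be fair, the concern driving your translation step is legitimate — a consecutive block of new labels creates (old $+$ new $=$ new) collisions whenever $L$ contains a label at most $k-2$, a point that deserves care in any version of this construction — but a full translation is too expensive a fix to recover the stated constants, so your approach as proposed cannot land on $\max\{k,4\sd(G)\}+k-5$.
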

\begin{proof}
Consider a labeling $L$ that induces $G$, along with at least one additional isolated vertex, that achieves $\range(L) = \sd(G)$.
From \cref{lemma: max L bounded 2 sd}, we know $\max L \leq 2 \sd(G)-1$, and thus adding additional vertices with labels at least $4\sd(G)-2$ does not add any edges incident to any vertex in $L$.
We now separate into two cases.

Case 1: $k \leq 4\sd(G)$.
If we add isolated vertices with labels in $[4\sd(G)-2,8\sd(G)-4]$, these vertices cannot be adjacent to each other, as their smallest sum is $8\sd(G)-3$, which is larger than any label.
This is a total of $4\sd(G)-1$ possible labels, so if $k-1 \leq 4\sd(G)-1$ we can simply add the $k-1$ labels in $[4\sd(G)-2,4\sd(G)+k-4]$ to $L$, which will induce $G$ along with at least $k$ isolated vertices, namely the $k-1$ isolated vertices we added along with the at least one additional isolated vertex that $L$ must have originally induced, due to $G$ not having any isolated vertices.
As $4\sd(G)+k-4$ is the new largest label and $\min L \geq 1$, the range of our labeling is at most $4\sd(G)+k-5=\max\{k,4\sd(G)\}+k-5$.

Case 2: $k > 4\sd(G)$.
We can add in $k-1$ isolated vertices with labels being $[k-2,2k-4]$, where $k-2 \geq 4\sd(G)-1 > 4\sd(G)-2$, so these vertices are truly isolated and do not add any edges incident to any vertex in $L$.
As $2k-4$ is the new largest label and $\min L \geq 1$, the range of our labeling is at most $2k-5=\max\{k,4\sd(G)\}+k-5$.
\end{proof}
\begin{remark}\label{remark: sd add vertices upper bound asymptotic tightness}
The upper bound from \cref{theorem: sd add isolated upper bound} is tight up to a constant factor.

To see this, notice that $\sd(G\cup N_k) \geq \max\{\sd(G),k+1\}$, so if $\sd(G) \geq k+1$, then 
\[ 5\sd(G\cup N_k) \geq 5\sd(G) \geq \max\{k,4\sd(G)\} + k-5,\]
and if $\sd(G) \leq k$, then
\[ 5\sd(G\cup N_k) \geq 5k+5 \geq \max\{4k,4\sd(G)\}+k+5 \geq \max\{k,4\sd(G)\}+k-5,\]
so the upper bound is within a factor of 5 of the true sum-diameter value.
\end{remark}

\subsection{Graph join}
Let $G_1 + G_2$ denote the join of graphs $G_1$ and $G_2$, which is the disjoint union $G_1 \cup G_2$ together with all the edges joining $V_1$ and $V_2$, i.e., edges where one vertex is from the first vertex set and the other from the second.

Before we address the graph join, however, we first address adding a vertex with arbitrary edges incident to it.
\begin{theorem}\label{theorem: sd add vertex upper bound}
Let $G$ be a graph with no isolated vertices, and let $G'$ be a graph obtained by adding a vertex to $G$ along with any desired edges incident to this new vertex.
Then
\begin{align*}
    \sd(G') \leq 4\sd(G)-1.
\end{align*}
\end{theorem}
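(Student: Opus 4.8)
The plan is to take an optimal labeling of $G$, apply the translation technique of \cref{lemma: translation lemma} to separate the vertex labels from the edge labels, and then insert the new vertex together with its incident edges into the gap created by this separation. The key insight I would exploit is that after translation, the vertex labels of $G$ occupy a low interval while the edge labels occupy a higher interval, so I can encode the new edges as additional high labels without disturbing the structure of $G$.

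First I would take a labeling $L$ inducing $G$ with $\range(L)=\sd(G)$, and by \cref{lemma: max L bounded 2 sd} record that $\max L \leq 2\sd(G)-1$, equivalently $\min L \leq \sd(G)-1$. I would then apply \cref{lemma: translation lemma} with an appropriate shift $x$ (around $x = \sd(G)-\min L$ or slightly larger) to obtain a labeling $L'$ whose vertex labels $S'$ lie in a low interval of the form $[\sd(G)+\text{const}, 2\sd(G)+\text{const}]$ and whose edge labels $T'$ lie in a disjoint higher interval, with the guarantee that every edge of $G$ is witnessed by a sum landing in $T'$, and no spurious edges appear. The new vertex $v$ would be labeled with the smallest available value strictly below all of $S'$ — small enough that for each neighbor $w$ of $v$ in $G'$, the sum $(\text{label of }v)+(\text{label of }w)$ lands in a fresh interval where I can place an ``edge label'' realizing that edge.

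The main obstacle — and where I would spend the most care — is controlling the three kinds of unwanted adjacencies that the new labels might introduce: (i) the new vertex label $v$ must not accidentally be adjacent to any existing vertex unless we want it to be, (ii) the newly added edge labels for $v$'s edges must not create edges among the vertices of $G$ or between $v$ and non-neighbors, and (iii) $v$ together with an existing vertex must not sum into an existing label of $G$. Managing (iii) is the subtle point: choosing $v$ to be small and placing its edge labels high, combined with the range bound $\max L \leq 2\sd(G)-1$, should make each sum $v+w$ land above $\max S'$ but be freely placeable, and I would verify that the gaps between intervals exceed $\range(S')$ so that no vertex-vertex sum can reach into an edge-label region it shouldn't.

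Finally, I would tally the range. The lowest label is $v$, chosen near $1$, and the highest is the largest edge label needed to realize $v$'s incident edges, which sits at roughly twice the top of the vertex interval. Tracking constants carefully through the translation (the vertex labels reaching up to about $2\sd(G)$ and the edge labels for $v$ reaching up to about $4\sd(G)$) should yield $\range \leq 4\sd(G)-1$. The arithmetic is routine once the interval layout is fixed; the real work is pinning down the shift $x$ and the label of $v$ so that all the separation inequalities hold simultaneously and no extraneous edges survive.
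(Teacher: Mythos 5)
There is a genuine gap, and it sits exactly where you flagged obstacles (i) and (iii): the mechanism you propose for preventing spurious adjacencies does not exist. In a sum graph the label witnessing an edge $(v,w)$ is forced to be $v+w$; you have no freedom to ``place'' it. If $v$ is chosen near $1$, strictly below the vertex interval, then for every vertex label $u$ of $G$ the sum $v+u$ lies inside (or barely above) the interval $[\min S', \max S']$, not in a fresh region. Since the translation of \cref{lemma: translation lemma} preserves the internal spacing of the vertex labels, and these may well be consecutive integers (nothing about an optimal labeling of $G$ rules this out), the sum $v+u$ for a \emph{non-neighbor} $u$ can coincide with an existing vertex label $u'$, which makes $v$ adjacent to $u$ whether you want it or not. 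Choosing $v$ small makes this worse, not better: to rule out all collisions of the form $v+u=u'$ you would need $v > \range(S')$, which can be as large as $\sd(G)-1$, and once $v$ is that large the interval layout and the final range accounting no longer give $4\sd(G)-1$. Relatedly, your tally is internally inconsistent: with $v\approx 1$ the edge labels for $v$'s edges reach only about $2\sd(G)$, not $4\sd(G)$, and the top of the construction would be the old edge interval at about $3\sd(G)$ --- a bound \emph{better} than the theorem's, which is a signal that the construction cannot be inducing $G'$.

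The paper's proof fixes precisely this problem with one extra move you are missing: after the translation, multiply every label by $2$, so that all existing labels are even; then give the new vertex the odd label $b = 2\sd(G)+1$ and add the (odd) labels $b+a_i$ only for the desired neighbors $a_i$. Parity then does all the work. An odd sum $b+a_u$ can be a label only if it equals one of the new odd labels $b+a_j$, which forces $a_u = a_j$, so $b$ is adjacent exactly to its prescribed neighbors; and no two even labels can sum to an odd label, so the structure of $G$ is untouched. Everything else in your outline (starting from an optimal labeling, invoking \cref{lemma: max L bounded 2 sd}, translating via \cref{lemma: translation lemma}, then checking the handful of unwanted-adjacency cases) matches the paper, but without the doubling-plus-odd-label device, or some substitute that controls collisions of $v+u$ with existing labels, the construction as you describe it does not induce $G'$.
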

\begin{proof}
Let $L$ be an optimal labeling that induces $G$ along with some isolated vertices.
\cref{lemma: max L bounded 2 sd} implies that $\min L \leq \sd(G) - 1$.
We use the translation construction in \cref{lemma: translation lemma} with $x=\sd(G)-\min L$ to yield $L'$, which induces $G$.
The vertex labels of $L'$ are within $[\sd(G),2\sd(G)-1]$ as the vertex labels of $L$ are within $[\min L, \min L + \sd(G)-1]$, and the edge labels of $L$ are within $[2\min L + 1,\min L + \sd(G)]$, so the edge labels of $L'$ are within $[2\sd(G)+1,3\sd(G)-\min L]$.

Now multiply all labels of $L'$ by 2 to yield $L''$, which still induces $G$.
Thus the vertex labels are within $[2\sd(G),4\sd(G)-2]$ and the edge labels are within $[4\sd(G)+2,6\sd(G)-2]$.
Now we add our new vertex with label $b=2\sd(G)+1$.
For every vertex label $a_i \in L''$ that our new vertex is adjacent to in $G'$, we add an edge label $b+a_i$, so that our new vertex is adjacent to the desired vertices in $G$.
As all the labels of $L''$ are even and the new labels are all odd, these new labels do not create new edges between any pair of labels in $L''$.
As all edge labels are distinct from the vertex labels in $L''$, no edge label of $L''$ is adjacent to $b$.
Now it remains to show that all the new edge labels $b+a_i$ are isolated.
Clearly $b$ cannot be adjacent to such an edge label, as $b+b+a_i \geq 6\sd(G)+2$, which is larger than any label.
And any label $\ell \in L''$ cannot be adjacent to $b+a_i$, as this would require $\ell + b + a_i = b + a_j$, which implies $\ell = a_j - a_i \leq 2\sd(G)-2$, but $\ell \geq \min L'' = 2\sd(G)$, so no such $\ell$ exists.
Finally, $b+a_i$ and $b+a_j$ cannot be adjacent, as their sum is larger than any label.

This labeling induces $G'$, so we find $\sd(G') \leq (b+4\sd(G)-2)-2\sd(G) = 4\sd(G)-1$.
\end{proof}

\begin{remark}\label{remark: sd add vertex implies cone addition}
In particular, having our new vertex be adjacent to all original vertices yields $G'=G+K_1$, and thus $\sd(G+K_1)\leq4\sd(G)-1$.
The graph $G+K_1$ is obtained from $G$ by adding a vertex that is connected to all vertices in $G$, which visually adds a ``cone" over $G$.
Repeatedly joining $K_1$ to a graph $n$ times, which is equivalent to $G+K_n$, has been called the $n$th cone over $G$, and its chip-firing dynamics have been studied by Brown, Morrow, and Zureick-Brown \cite{brown2018chip} and then by Goel and Perkinson \cite{goel2019critical}.

In the other extreme case, if we do not wish for this new vertex in $G'$ to have any edges incident to it, we can use \cref{theorem: sd add isolated upper bound} instead for a better bound, namely $\sd(G\cup K_1) \leq 4\sd(G)-4$.
\end{remark}

\cref{theorem: sd disjoint union upper bound translation improvement} had four intervals containing labels, which in increasing order were the vertex labels for $G_1$, the edge labels for $G_1$, the vertex labels for $G_2$, and the edge labels for $G_2$.
Similarly, \cref{theorem: sd add vertex upper bound} also uses four intervals in its translation-based argument, though these intervals were slightly different as they were able to overlap by having different parities: the vertex labels for $G$, the edge labels for $G$, the vertex label for $K_1$, and the edge labels for the join.
We could have instead made the vertex label for the additional vertex sufficiently large, and then the edge labels for the join above that, similar to the construction in \cref{theorem: sd disjoint union upper bound translation improvement}, though this would have yielded a worse bound.
It stands to reason that the join between two arbitrary graphs can be constructed using five intervals containing labels, in the following increasing order: the vertex labels for $G_1$, the edge labels for $G_1$, the vertex labels for $G_2$, the join between $G_1$ and $G_2$, and the edge labels for $G_2$.
See \cref{fig: sd join schematic} for a visual schematic of this construction.
Using this concept, the following result provides an upper bound on $\sd(G_1+G_2)$.

\begin{theorem}\label{theorem: sd join upper bound}
Suppose $G_1$ and $G_2$ are two graphs with no isolated vertices, where without loss of generality $\sd(G_1) \leq \sd(G_2)$.
Then
\begin{align*}
    \sd(G_1 + G_2) \leq 8\sd(G_2) + 11\sd(G_1) - 5.
\end{align*}
\end{theorem}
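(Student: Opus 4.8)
The plan is to realize the five-interval layout sketched before the statement, in the increasing order $V_1 < E_1 < V_2 < J < E_2$, and to rule out every spurious edge by magnitude separation alone. Write $s_1=\sd(G_1)$ and $s_2=\sd(G_2)$, with $s_1\le s_2$. First I would take optimal labelings $L_1,L_2$ of $G_1,G_2$ and apply the separation technique of \cref{lemma: translation lemma} to each, which splits the vertex labels from the edge labels and shifts the vertex block by $x$ while shifting the edge block by $2x$. Applying it to $L_1$ with $x$ chosen so the vertex block begins at $p_1:=s_1+s_2-1$ gives $V_1\subseteq[p_1,p_1+s_1-1]$ and $E_1\subseteq[2p_1+1,2p_1+s_1-1]$, so the largest sum of two $G_1$-labels is at most $2\max E_1\le 6s_1+4s_2-6$. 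Applying it to $L_2$ with the vertex block beginning at $p_2:=6s_1+4s_2-5$ gives $V_2\subseteq[p_2,p_2+s_2-1]$ and $E_2\subseteq[2p_2+1,2p_2+s_2-1]$. Finally I adjoin the join labels $J=\{u+v : u\in V_1,\ v\in V_2\}$, which occupy $[p_1+p_2,\ p_1+p_2+s_1+s_2-2]$, strictly between $V_2$ and $E_2$.

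The heart of the argument is verifying that the induced sum graph is exactly $G_1+G_2$ together with isolated vertices. By \cref{lemma: translation lemma}, the blocks $V_1\cup E_1$ and $V_2\cup E_2$ separately induce $G_1$ and $G_2$ with the correct internal edges, and by construction every sum $u+v$ with $u\in V_1$, $v\in V_2$ is a label, so all of the complete bipartite join edges are present. It then remains to show that every other (unintended) pairwise sum either reproduces a genuine edge or falls into a gap containing no label. Since $p_2$ is chosen just above the largest $G_1$-internal sum, all sums of two $G_1$-labels lie below $V_2$; \cref{lemma: max L bounded 2 sd} (via the structure of \cref{lemma: translation lemma}) guarantees that the non-adjacent sums within $V_1$ and within $V_2$ are not labels, so the internal edge sets are reproduced exactly.

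The main obstacle, and the step that actually dictates the interval heights, is controlling the sums that threaten to land inside the join block $J$ or the top block $E_2$. The binding constraint is $V_1+J$: since $\min(V_1+J)=2p_1+p_2$ while $\max J=p_1+p_2+s_1+s_2-2$, clearing $J$ requires $\min V_1>\range(J)=s_1+s_2-2$, which is exactly why I take $p_1=s_1+s_2-1$ (the analogous sum $E_1+V_2$ imposes the weaker condition $p_1>s_1+s_2-3$). Once this holds, a short magnitude check confirms that $V_1+J$, $E_1+V_2$, $E_1+J$, and $V_1+E_1$ all fall into gaps between consecutive blocks, while $V_2+J$, $J+J$, and every sum involving $E_2$ exceed $\max E_2$ and hence hit no label; in particular each label of $J$ is genuinely isolated. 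This is precisely where the coefficient of $s_1$ is generated: forcing $V_1$ up to height $\sim s_1+s_2$ pushes $p_2\sim 6s_1+4s_2$, and hence $\max E_2\sim 12s_1+9s_2$.

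Finally I would compute the range as $\range(L)=\max E_2-\min V_1\le (2p_2+s_2-1)-p_1 = 11s_1+8s_2-10$, which is at most $8\sd(G_2)+11\sd(G_1)-5$, establishing the claimed bound. It is worth noting that, unlike the cone construction of \cref{theorem: sd add vertex upper bound}, this layout needs no parity device: the dangerous collision $V_1+J$ against $J$ is insensitive to parity (both sides are congruent modulo $2$ to the $V_2$-coordinate), so it can only be resolved by magnitude separation, which is exactly what fixes the five interval positions above.
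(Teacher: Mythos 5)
Your proposal is correct and follows essentially the same five-block translation construction ($V_1 < E_1 < V_2 < E_{12} < E_2$ via \cref{lemma: translation lemma}) as the paper's proof, differing only in cosmetic details: you adjoin exactly the sumset $J=V_1+V_2$ rather than the full interval containing it, and you use marginally tighter offsets ($\min V_1 = s_1+s_2-1$, $\min V_2 = 6s_1+4s_2-5$ versus the paper's $s_1+s_2$ and $6s_1+4s_2-2$). Your arithmetic checks out and in fact yields the slightly stronger constant $11\sd(G_1)+8\sd(G_2)-10$, which of course implies the stated bound.
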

\begin{proof}
We use the same notation as in \cref{theorem: sd disjoint union upper bound translation improvement}.
We apply the translation construction in \cref{lemma: translation lemma} with $x = \sd(G_1)+\sd(G_2)-\min(L_1)$ to yield a labeling $L_1'$ that induces $G_1$.
Let the set of vertex labels of $L_1'$ be $V_1$, and let the set of edge labels of $L_1'$ be $E_1$.
Similar reasoning as before yields
\[ V_1 \subseteq [\sd(G_1)+\sd(G_2),2\sd(G_1)+\sd(G_2)-1] \]
and
\[ E_1 \subseteq [2\sd(G_1)+2\sd(G_2)+1,3\sd(G_1)+2\sd(G_2)-1]. \]
From our previous argument in \cref{theorem: sd disjoint union upper bound translation improvement}, we needed $\min L_2' \geq 2\max L_1'$ to ensure the addition of $L_2'$ would not induce additional edges between labels in $L_1'$.
Thus we apply the translation construction in \cref{lemma: translation lemma} using $x=6\sd(G_1)+4\sd(G_2)-2-\min L_2$, to get a labeling $L_2'$ that in isolation induces $G_2$.
Similar to $V_1$ and $E_1$, define $V_2$ and $E_2$.
We find
\[ V_2 \subseteq [6\sd(G_1)+4\sd(G_2)-2,6\sd(G_1)+5\sd(G_2)-3] \]
and
\[ E_2 \subseteq [12\sd(G_1)+8\sd(G_2)-3,12\sd(G_1)+9\sd(G_2)-5]. \]
By the same reasoning as in \cref{theorem: sd disjoint union upper bound translation improvement}, we find this labeling $L_1'\cup L_2'$ induces $G_1 \cup G_2$.

We now add in all the edges between a vertex of $G_1$ and a vertex in $G_2$, i.e., all $a_i+b_j$ where $a_i \in V_1$ and $b_j \in V_2$.
Let the set of these labels be denoted $E_{12}$.
It suffices to simply add the entire interval
\[ [\min V_1+\min V_2,\max V_1 + \max V_2] = [7\sd(G_1)+5\sd(G_2)-2,8\sd(G_1)+6\sd(G_2)-4] \supseteq E_{12}.\]
We find that these sets of labels, in increasing order, are $V_1,E_1,V_2,E_{12}$, and $E_2$.

We now confirm this does not add any unwanted edges, i.e., edges not between $V_1$ and $V_2$.
Suppose $E_{12}$ caused an additional edge not incident to a label in $E_{12}$.
As $\min E_{12} \geq 2\max E_1$, the two labels cannot both be from $V_1\cup E_1 = L_1'$.
Thus at least one of the labels must be from $V_2$.
We easily see that the labels cannot both be from $V_2$ as $2\min V_2 + 1 > \max E_{12}$, and any edge between $V_1$ and $V_2$ was already wanted, so it remains to check sums between $E_1$ and $V_2$.
But
\[ \min E_1 + \min V_2 \geq 8\sd(G_1)+6\sd(G_2)-1 > 8\sd(G_1)+6\sd(G_2)-4 \geq \max E_{12},\]
so no such unwanted edges are induced.
Finally, suppose $E_{12}$ caused an additional edge incident to a label in $E_{12}$.
The labels cannot both be from $E_{12}$ as $2\min E_{12}+1 > \max E_2$.
If the sum of the labels of this edge was in $E_{12}$, as the range of the interval containing $E_{12}$ is $\sd(G_1)+\sd(G_2)-2$, the other label incident to this edge must have value at most $\sd(G_1)+\sd(G_2)-2 < \min V_1$, but no such label exists.
Otherwise, the sum of the labels of this edge must be in $E_2$, but the minimum distance between an element of $E_2$ and an element of $E_{12}$ is at least $4\sd(G_1)+2\sd(G_2)+1$, so the label must be at least this value, which excludes $V_1$ and $E_1$ from consideration.
The only remaining possibility is a label in $V_2$ plus a label in $E_{12}$ equals a label in $E_2$.
But
\[ \min V_2 + \min E_{12} \geq 13\sd(G_1)+9\sd(G_2)-4 > 12\sd(G_1)+9\sd(G_2)-5 \geq \max E_2,\]
so this is not possible.

Hence, the addition of $E_{12}$ only induces edges between $V_1$ and $V_2$, and it induces all such edges, as desired.
Our labeling $L$ thus induces $G_1 + G_2$, so
\[ \sd(G_1+G_2) \leq \range(L) \leq 11\sd(G_1)+8\sd(G_2) - 5,\]
proving the result.
\end{proof}

See \cref{fig: sd join schematic} for a schematic diagram of the construction used in \cref{theorem: sd join upper bound}.
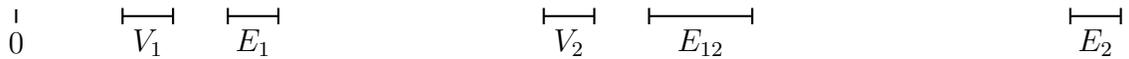
\begin{figure}[htbp!]
    \centering
    \begin{tikzpicture}[scale=0.7,baseline,thick]
        
        \draw (0,-0.13) -- (0,0.13);
        \node at (0,-0.5) {0};
        
        \draw[|-|] (2,0) -- (3,0);
        \node at (2.5,-0.5) {$V_1$};
        
        \draw[|-|] (4,0) -- (5,0);
        \node at (4.5,-0.5) {$E_1$};
        
        \draw[|-|] (10,0) -- (11,0);
        \node at (10.5,-0.5) {$V_2$};
        
        \draw[|-|] (12,0) -- (14,0);
        \node at (13,-0.5) {$E_{12}$};
        
        \draw[|-|] (20,0) -- (21,0);
        \node at (20.5,-0.5) {$E_2$};
    \end{tikzpicture}
    \caption{Schematic diagram of the \cref{theorem: sd join upper bound} construction.}
    \label{fig: sd join schematic}
\end{figure}

\begin{remark}\label{remark: sd join upper bound asymptotic tightness}
The upper bound from \cref{theorem: sd join upper bound} can be tight up to a constant factor.

To see this, consider the infinite family of graphs $K_n + K_m = K_{n+m}$ for $n,m \geq 2$.
From \cref{theorem: unlimited equal for K_n} we know $\sd(K_n + K_m) =4(n+m)-6 = \sd(K_n)+\sd(K_m)+6$.
Thus for this family of graphs, the upper bound is within a factor of $\frac{19}{2}$ of the true sum-diameter value.
\end{remark}

\section{Sum-diameter under vertex and edge operations}\label{section: sd vertex edge addition deletion}
In \cref{section: sd binary operations} we looked at the sum-diameter under various binary graph operations, including the disjoint union and graph join.
In this section we study the sum-diameter under removing or adding a vertex or edge, as well as contracting an edge.

In \cref{theorem: sd add isolated upper bound} and \cref{theorem: sd add vertex upper bound} we already bounded the sum-diameter under adding a vertex with any arbitrary set of incident edges.
We now study the reverse direction, removing a vertex $v$ along with its incident edges from $G$, which we denote $G-v$.

\begin{proposition}\label{proposition: sd delete vertex}
Let $G$ be a graph with no isolated vertices, and let $v$ be a vertex of $G$.
Then
\begin{align*}
    \sd(G-v) \leq 2\sd(G)-2.
\end{align*}
\end{proposition}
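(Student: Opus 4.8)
The plan is to produce a labeling of $G-v$ directly from a labeling of $G$ by deleting the single label assigned to $v$. The obstacle with naively deleting a label is that the label of $v$ might serve as a sum-witness for some edge $(u,w)$ of $G$ with $u,w \neq v$, i.e.\ the label of $v$ could equal $u+w$; deleting it would then destroy the edge $(u,w)$, which we wish to keep. To avoid this, I would first pass to a labeling in which no vertex label is ever a sum-witness.

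Concretely, I would invoke \cref{lemma: translation separation sd double} to obtain a labeling $L'$ that induces $G$ together with some isolated vertices, satisfies $\range(L') \leq 2\sd(G)-2$, and has the property that every edge $(u,w)$ of the induced sum graph satisfies that $u+w$ is an isolated vertex. Let $p \in L'$ be the label assigned to $v$. Since $G$ has no isolated vertices, $v$ is adjacent to some vertex, so $p$ is a non-isolated label; in particular, $p$ is not an isolated vertex, so by the defining property of $L'$ the label $p$ cannot equal $u+w$ for any edge $(u,w)$ of the sum graph.

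Now set $L'' = L' \setminus \{p\}$ and consider its induced sum graph. Deleting a label can never create new edges, so the only possible change is the loss of edges. An edge $(a,b)$ with $a,b \in L''$ is lost precisely when $a+b = p$; but we just argued no such edge exists, so the only edges removed are those incident to $p$ itself. Hence the induced sum graph of $L''$ is exactly $G-v$ together with isolated vertices: the vertex labels other than $p$ carry precisely the edges of $G$ not incident to $v$, any vertex of $G-v$ whose only neighbor was $v$ becomes isolated (as it should in $G-v$), and the former edge-witnesses remain isolated. Since removing a label cannot increase the range, $\range(L'') \leq \range(L') \leq 2\sd(G)-2$, giving $\sd(G-v) \leq 2\sd(G)-2$.

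The one point requiring care — and the reason the separation step is essential — is verifying that $p$ witnesses no edge among the surviving labels; once $L'$ is in separated form this is immediate, since all witnesses are isolated (edge) labels while $p$ is a vertex label. I would also remark that $G-v$ may itself contain isolated vertices, but this is harmless: the definition of sum-diameter already permits the induced sum graph to be $G-v$ together with any number of isolated vertices, so these indistinguishable isolated vertices pose no issue.
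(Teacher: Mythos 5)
Your proposal is correct and follows essentially the same route as the paper: both invoke \cref{lemma: translation separation sd double} to obtain a vertex/edge-separated labeling of range at most $2\sd(G)-2$, then delete the label of $v$, observing that since this label is not an edge-witness, only the edges incident to $v$ are lost. Your write-up just spells out more explicitly the two checks (no new edges from deletion, and the harmlessness of $G-v$ possibly having isolated vertices) that the paper leaves implicit.
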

\begin{proof}
From \cref{lemma: translation separation sd double}, we find that the optimal range of a labeling of $G$ for which no label is simultaneously both a vertex label and an edge label is at most $2\sd(G)-2$.
For such a labeling, removing the vertex label associated with $v$ thus removes $v$ along with its incident edges from $G$, but as this label is not an edge label, does not change the induced sum graph otherwise.
Hence, a labeling $L$ with $\range(L) \leq 2\sd(G)-2$ exists which induces $G-v$, so $\sd(G-v) \leq 2\sd(G)-2$, as desired.
\end{proof}

In fact, by using the same argument but removing multiple vertices, we have the following result on induced subgraphs of $G$.
Recall that for a subset $U \subseteq V$ of the vertex set $V$ of a graph $G$, the induced subgraph $G[U]$ is the subgraph of $G$ consisting of $U$ and all of the edges connecting pairs of vertices in $U$.
\begin{proposition}\label{proposition: sd induced subgraph}
Let $G$ be a graph with no isolated vertices, and let $U\subseteq V$ be a subset of the vertex set $V$ of $G$.
Then
\begin{align*}
    \sd(G[U]) \leq 2\sd(G)-2.
\end{align*}
\end{proposition}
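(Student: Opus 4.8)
The plan is to mimic exactly the proof of \cref{proposition: sd delete vertex}, but rather than invoking \cref{lemma: translation separation sd double} as a black box, I would re-examine the construction underlying it and delete several vertex labels at once instead of just one. Concretely, let $L$ be an optimal labeling achieving $\range(L)=\sd(G)$, and apply the translation construction of \cref{lemma: translation lemma} with $x=\sd(G)-1-\min L$ to obtain a labeling $L'=(S+x)\cup(T+2x)$. By the conclusion of that lemma, $L'$ induces $G$ along with isolated vertices, and crucially every edge of $G$ is realized by a sum landing in the \emph{edge-label} block $T+2x$, which is disjoint from the \emph{vertex-label} block $S+x$. As computed in the proof of \cref{lemma: translation separation sd double}, this labeling satisfies $\range(L')\leq 2\sd(G)-2$.

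The key observation is that because no vertex label in $S+x$ is simultaneously an edge label, the adjacency structure within $S+x$ is governed entirely by which sums land in $T+2x$, and deleting any subset of vertex labels cannot destroy or create edges among the remaining vertex labels. So the step I would carry out is: for the induced subgraph $G[U]$, simply remove from $S+x$ all vertex labels corresponding to vertices in $V\setminus U$, keeping the entire edge block $T+2x$ intact. The resulting label set $L''$ still has the remaining vertices of $U$ adjacent exactly according to $G[U]$, since an edge between two vertices of $U$ persists (its sum is still present in $T+2x$) and no spurious edges appear (the sum of two surviving vertex labels is either an element of $T+2x$, in which case it was already an edge of $G$ restricted to $U$, or it is not, in which case there is no edge). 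The labels that were edge labels for edges incident to deleted vertices now simply become additional isolated vertices, which is harmless.

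Finally I would bound the range: since $L''\subseteq L'$, we have $\range(L'')\leq\range(L')\leq 2\sd(G)-2$, and $L''$ induces $G[U]$ together with some isolated vertices, giving $\sd(G[U])\leq 2\sd(G)-2$ as claimed. The main subtlety — and the step I would state most carefully — is verifying that deleting vertex labels introduces no new edges among the survivors; this follows from the separation $\max(S+x)<\min(T+2x)$ established in \cref{lemma: translation lemma}, which guarantees that any sum of two surviving vertex labels that equals some element of $L''$ must in fact equal an edge label, hence corresponds to a genuine edge of $G$ between two vertices of $U$, i.e.\ an edge of $G[U]$. Everything else is a direct reuse of the estimates already proved for the single-vertex case, so I do not anticipate any additional computation beyond noting that the range bound is monotone under taking subsets of labels.
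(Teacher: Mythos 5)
Your proposal is correct and follows essentially the same route as the paper: the paper likewise takes the separated labeling from \cref{lemma: translation separation sd double} (i.e.\ the translation construction of \cref{lemma: translation lemma}), deletes the vertex labels of $V\setminus U$, and uses monotonicity of the range under taking subsets, exactly as you do. Unfolding the lemma rather than citing it, and spelling out that deletions cannot create edges among survivors, is just a more explicit rendering of the same argument.
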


We present the following stronger statement as an open question.
\begin{question}\label{question: monotonicity wrt induced subgraph}
For any induced subgraph $G[U]$ of $G$, is $\sd(G[U]) \leq \sd(G)$?
\end{question}

Intuitively, the answer to this question seems to be yes: deleting vertices should make the graph simpler, so a labeling with smaller range would be expected.
However, difficulty arises due to the possibility that some of the deleted vertex labels could simultaneously be edge labels, so that removing them would also remove an edge of the induced subgraph.
We observe that \cref{question: monotonicity wrt induced subgraph} holds for complete graphs, and would hold for cycles assuming cycles $\sd(C_n)=2n-1$, i.e., being sharp on the upper bound of \cref{proposition: trivial C_n uspum bounds}, due to the upper bound on $\sd(P_n)$ from \cref{proposition: trivial P_n uspum bounds}.

Now we look at deleting or contracting an edge.
For some edge $e=(u,v)$ in $G$, let $G\setminus e$ denote edge deletion, i.e., the graph $G$ with $e$ removed, and let $G/e$ denote edge contraction, the graph obtained from $G$ by removing $e$ and then identifying $u$ and $v$ together.
\begin{proposition}\label{proposition: sd delete or contract edge}
Suppose $G$ is a graph with no isolated vertices, and let $e=(u,v)$ be an edge of $G$.
Then
\begin{align*}
    \sd(G\setminus e) \leq 4\sd(G)-1
\end{align*}
and
\begin{align*}
    \sd(G/e) \leq 4\sd(G)-1.
\end{align*}
\end{proposition}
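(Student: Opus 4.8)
The plan is to prove both bounds by directly relabeling an optimal labeling of $G$, mimicking the parity-separation idea from the proof of \cref{theorem: sd add vertex upper bound}. First I would reproduce the intermediate labeling constructed there: starting from an optimal labeling $L$ of $G$ with $\range(L)=\sd(G)$, apply the translation of \cref{lemma: translation lemma} (using \cref{lemma: max L bounded 2 sd} to bound $\min L \le \sd(G)-1$) and then scale by $2$, obtaining a labeling $L''$ that induces $G$ in which every label is even, the vertex labels lie in $[2\sd(G),4\sd(G)-2]$, and the edge labels lie in $[4\sd(G)+2,6\sd(G)-2]$. In particular, $G$'s adjacencies are certified by even ``edge labels'' occupying an interval disjoint from, and strictly above, the interval of vertex labels.

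For edge deletion, write $e=(u,v)$ and let $u'',v''$ be the even labels of $u,v$ in $L''$. I would delete the vertex label $v''$ and re-introduce $v$ on the fresh odd label $v^\ast=2\sd(G)+1$, adding, for every neighbor $w$ of $v$ with $w\ne u$, the odd edge label $v^\ast+w''$. Since $v^\ast$ and all the new edge labels are odd while every surviving label of $L''$ is even, these additions cannot interfere with any adjacency already present among the even labels, and one checks that $v^\ast$ is adjacent to exactly $N_G(v)\setminus\{u\}$: the omitted value $v^\ast+u''$ never appears as a label, so the edge $(u,v)$ is destroyed, while every other edge of $G$ is retained. All labels remain within $[2\sd(G),6\sd(G)-1]$, giving range at most $4\sd(G)-1$.

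For edge contraction I would instead delete both even labels $u''$ and $v''$ and introduce a single merged vertex on the odd label $w=2\sd(G)+1$, adding the odd edge label $w+z''$ for each $z\in (N_G(u)\cup N_G(v))\setminus\{u,v\}$. The same parity argument shows that $w$ is adjacent to exactly the required neighbor set and nothing else, that each leftover even edge label formerly certifying an edge at $u$ or $v$ either becomes isolated or continues to certify only surviving edges among the remaining vertices, and that no spurious edges are created; the resulting sum graph is $G/e$ together with isolated vertices, and the range is again at most $4\sd(G)-1$.

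The main obstacle, and the reason the naive ``delete the value $u+v$'' fails, is that a single edge label of an optimal labeling may simultaneously certify several edges of $G$, so one cannot excise one edge by removing one label. Forcing the modified endpoint (or, for contraction, both endpoints) onto labels of the opposite parity is precisely what decouples the edges incident to $u,v$ from the rest of the graph and lets us prescribe the new adjacencies one at a time; the remaining work is the routine verification that no unwanted edges appear, which follows from the disjointness of the two parity classes together with the interval bounds above.
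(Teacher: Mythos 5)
Your proposal is correct and follows essentially the same route as the paper: both apply the translate-then-scale-by-2 construction from the proof of \cref{theorem: sd add vertex upper bound}, exploit the separation of vertex and edge labels to delete the endpoint label(s) harmlessly (as in \cref{proposition: sd delete vertex}), and re-introduce the affected vertex (or merged vertex) on the odd label $2\sd(G)+1$ with odd edge labels prescribing exactly the desired adjacencies. The only cosmetic difference is the order of operations (you delete before re-inserting, the paper inserts before deleting), which yields the identical final labeling and the same bound $4\sd(G)-1$.
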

\begin{proof}
We first address the $\sd(G\setminus e)$ bound.
Intuitively, our approach will be remove $v$ and then re-insert it with $e$ missing.
Suppose $u_1,\dots,u_k$ are the vertices other than $u$ to which $v$ is adjacent, and let their corresponding vertex labels be $a_1,\dots,a_k$.
We start with the construction as in \cref{theorem: sd add vertex upper bound}, where we add a new vertex $v'$ with label $b=2\sd(G)+1$ that is adjacent to vertices $u_1,\dots,u_k$ by adding edge labels $b+a_1,\dots,b+a_k$.
This gives a new labeling that has range at most $4\sd(G)-1$.
This construction initially translated the labels so that the vertex labels and edge labels are separated, and thus by the same reasoning as \cref{proposition: sd delete vertex}, removing the label associated with vertex $v$ simply removes $v$ from the induced sum graph, as this label is not an edge label so it does not remove any edges not incident to $v$.
Removing this label cannot increase the range, and we find the net effect on the induced sum graph is that we replaced $v$ with $v'$, where $v'$ is adjacent to the same vertices except $u$, so our induced sum graph is $G \setminus e$, and thus $\sd(G\setminus e) \leq 4\sd(G)-1$.

For the $\sd(G/e)$ bound, the proof is essentially identical: we remove both vertices $u$ and $v$, replacing them by a vertex with the desired edges resulting from the edge contraction.
\end{proof}

Similarly, we look at adding an edge.
For two vertices $u,v$ in $G$ that are not adjacent, the graph obtained from $G$ by adding edge $e=(u,v)$ will be denoted $G+e$.
\begin{proposition}\label{proposition: sd add edge}
Suppose $G$ is a graph with no isolated vertices, and let $e=(u,v)$ be an edge between two vertices of $G$ that is not present in $G$.
Then
\begin{align*}
    \sd(G+e) \leq 4\sd(G)-1.
\end{align*}
\end{proposition}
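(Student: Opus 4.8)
The plan is to mimic the edge-deletion argument from \cref{proposition: sd delete or contract edge}, but now \emph{adding} an edge incident to $v$ rather than removing one. The underlying idea is the same: remove the vertex $v$ from $G$ using the translation-separated labeling, then re-insert a replacement vertex $v'$ whose incident edge set is exactly that of $v$ together with the new edge $e=(u,v)$. Since \cref{theorem: sd add vertex upper bound} bounds the cost of adding a vertex with \emph{any} prescribed set of incident edges by $4\sd(G)-1$, and since re-inserting $v'$ with one extra neighbor is still an arbitrary incident edge set, the same $4\sd(G)-1$ bound should fall out.

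Concretely, first I would invoke \cref{lemma: translation separation sd double} (or directly the construction in \cref{theorem: sd add vertex upper bound}) to obtain a labeling of $G$ in which the vertex labels and edge labels are separated into disjoint intervals, so that deleting the single label associated with $v$ removes exactly $v$ and its incident edges, with no collateral damage to other edges (this is the key feature exploited already in \cref{proposition: sd delete vertex}). Second, having deleted $v$, I would apply the vertex-addition construction of \cref{theorem: sd add vertex upper bound} to re-insert a new vertex $v'$ with odd label $b=2\sd(G)+1$, adding edge labels $b+a_i$ for each $a_i$ corresponding to a neighbor of $v'$ in $G+e$ — namely all original neighbors of $v$ together with $u$. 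The parity trick (all vertex/edge labels of $G$ even, the new labels odd) guarantees these new edge labels create no spurious adjacencies among the existing labels, and the remaining isolation checks are identical to those in \cref{theorem: sd add vertex upper bound}. The resulting induced sum graph is precisely $G+e$, and the range is at most $4\sd(G)-1$.

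The main thing to verify carefully — and the only place where $G+e$ differs from the edge-deletion case — is that adding the \emph{extra} edge label $b+a_u$ corresponding to $u$ does not accidentally create an unintended adjacency, and that $u$ genuinely becomes adjacent to $v'$ (i.e.\ $b+a_u$ is a valid label and induces the edge $(u,v')$) while not producing a second edge between $u$ and any other relabeled vertex. Since this is exactly the arbitrary-incident-edge-set scenario already handled in \cref{theorem: sd add vertex upper bound}, no genuinely new obstacle arises: the edge $(u,v)$ was absent in $G$ only because $a_u + a_v$ was not a label, and in the reconstruction we simply choose to include $b+a_u$, which the proof of \cref{theorem: sd add vertex upper bound} already permits. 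Thus the cleanest writeup is essentially a one-line reduction: the construction of \cref{theorem: sd add vertex upper bound}, applied after deleting $v$ as in \cref{proposition: sd delete or contract edge}, with $v'$ adjacent to $N_G(v)\cup\{u\}$, yields a labeling inducing $G+e$ of range at most $4\sd(G)-1$.
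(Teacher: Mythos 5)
Your proposal is correct and takes essentially the same approach as the paper: the paper's proof is precisely the argument of \cref{proposition: sd delete or contract edge} with $v$ re-inserted (via the construction of \cref{theorem: sd add vertex upper bound}, with label $b=2\sd(G)+1$) adjacent to its original neighbors together with $u$, and then the separated vertex label of $v$ deleted. Your reversal of the two steps (delete $v$ first, then re-insert $v'$) yields the identical final labeling, since you correctly work inside the translated-and-doubled labeling of $G$ itself rather than applying \cref{theorem: sd add vertex upper bound} as a black box to $G-v$, so the bound $4\sd(G)-1$ follows exactly as in the paper.
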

\begin{proof}
The proof is essentially the same as that of \cref{proposition: sd delete or contract edge}, where we instead reinsert $v$ with the extra edge $e$ rather than deleting it.
\end{proof}

\section{The sum-diameter of hypergraphs}\label{section: sd hypergraph}
In this section we generalize the sum-diameter to be defined for $k$-uniform hypergraphs and study some of its basic properties.
Recall that a hypergraph is a generalization of a graph where edges can be incident to an arbitrary number of vertices, and a $k$-uniform hypergraph is a hypergraph where all edges are incident to $k$ (distinct) vertices.
When $k=2$, we recover the definition of a graph, so we will study $k$-uniform hypergraphs for $k > 2$.
As loops, i.e., edges that are incident to only one vertex, cannot occur in such a graph, a simple $k$-uniform hypergraph is simply a $k$-uniform hypergraph where no edges are repeated.

We first generalize the notion of a sum graph to a $k$-sum hypergraph.
\begin{definition}\label{definition: k-sum hypergraph}
A simple $k$-uniform hypergraph $G$ is called a \emph{$k$-sum hypergraph} if there is a bijection $\ell$ from the vertex set $V$ to a set of positive integers $L \subset \Z_+$ such that edge $(v_1,\dots,v_k)\in E$ exists if and only if $\ell(v_1)+\cdots+\ell(v_k)\in L$.
We call $L$ a set of labels for the $k$-sum hypergraph $G$.
We will often not distinguish between the vertices and their respective labels.

Similarly, a simple $k$-uniform hypergraph $G$ is called an \emph{integral $k$-sum hypergraph} if such a bijection exists to a set of integers $L\subset \Z$.
\end{definition}

Conversely, any set of positive integers $L$ induces a $k$-sum hypergraph with vertex set $V=L$, and any set of integers $L$ induces an integral $k$-sum hypergraph.
As before, the vertex with maximum label in a $k$-sum hypergraph must be isolated, so any hypergraph without isolated vertices cannot be a $k$-sum hypergraph.
This leads to the following generalization of the sum number of a graph $G$.

\begin{definition}\label{definition: sum number hypergraph}
The \emph{sum number} of a $k$-uniform hypergraph $G$, denoted $\sigma(G)$, is the minimum number of isolated vertices that must be added to $G$ in order to yield a $k$-sum hypergraph.

Similarly, the \emph{integral sum number} of a $k$-uniform hypergraph $G$, denoted $\zeta(G)$, is the minimum number of isolated vertices that must be added to $G$ in order to yield an integral $k$-sum hypergraph.
\end{definition}

Finally, we generalize the sum-diameter of a graph $G$.
\begin{definition}\label{definition: sum-diameter hypergraph}
The \emph{sum-diameter} of a $k$-uniform hypergraph $G$, denoted $\sd(G)$, is the minimum possible value of $\range(L)$ for a set $L$ of positive integer labels, such that the induced $k$-sum hypergraph of $L$ consists of the disjoint union of $G$ with a nonnegative number of isolated vertices, namely at least $\sigma(G)$ additional isolated vertices.

We similarly define the \emph{integral sum-diameter} of a $k$-uniform hypergraph $G$, denoted $\isd(G)$.
\end{definition}

As before, it is clear that $\zeta(G) \leq \sigma(G)$ for all $k$-uniform hypergraphs $G$, and thus we have the following proposition.
\begin{proposition}\label{proposition: isd <= sd hypergraph}
For all $k$-uniform hypergraphs $G$, we have $\isd(G) \leq \sd(G)$.
\end{proposition}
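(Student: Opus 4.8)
The plan is to mirror the proof of \cref{lemma: iuspum <= uspum}, which established the analogous inequality in the graph case $k=2$, since the argument is insensitive to the uniformity $k$. The essential idea is that every labeling witnessing $\sd(G)$ is automatically an admissible labeling in the definition of $\isd(G)$, so the minimum defining $\isd(G)$ ranges over a superset of the labelings competing in the definition of $\sd(G)$, and can therefore only be smaller.

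Concretely, first I would fix a set $L \subset \Z_+$ of positive integer labels that achieves $\range(L) = \sd(G)$, so that by \cref{definition: sum-diameter hypergraph} the induced $k$-sum hypergraph of $L$ is the disjoint union of $G$ with some number $m \geq \sigma(G)$ of additional isolated vertices. Next I would observe that $L$, being a set of positive integers, is in particular a set of integers, i.e. $L \subset \Z$. The one thing to verify is that the induced integral $k$-sum hypergraph of $L$ coincides with its induced $k$-sum hypergraph: for any distinct labels $v_1,\dots,v_k \in L$, the edge-existence condition $\ell(v_1)+\cdots+\ell(v_k)\in L$ is purely a statement about membership in $L$ and does not depend on whether we regard $L$ as lying in $\Z_+$ or in $\Z$. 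Hence the induced integral $k$-sum hypergraph of $L$ is again $G$ together with the same $m$ isolated vertices.

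Finally, I would invoke the inequality $\zeta(G) \leq \sigma(G)$, valid for all $k$-uniform hypergraphs as noted just before the statement, to conclude that $m \geq \sigma(G) \geq \zeta(G)$. Thus $L$ induces $G$ with at least $\zeta(G)$ additional isolated vertices, making it a valid labeling in the definition of $\isd(G)$, and therefore $\isd(G) \leq \range(L) = \sd(G)$.

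I do not anticipate a genuine obstacle here; this is a routine containment-of-feasible-sets argument. The only points requiring (minor) care are the two checks flagged above: that passing from $\Z_+$ to $\Z$ leaves the induced $k$-sum hypergraph unchanged, and that the isolated-vertex requirement for $\isd(G)$ (at least $\zeta(G)$) is weaker than the requirement automatically met by the optimal $\sd(G)$-labeling (at least $\sigma(G)$), which is exactly what $\zeta(G) \leq \sigma(G)$ supplies.
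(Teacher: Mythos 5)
Your proposal is correct and matches the paper's reasoning: the paper derives this proposition exactly as in the graph case (\cref{lemma: iuspum <= uspum}), by noting that any optimal positive-integer labeling for $\sd(G)$ is already a valid integer labeling for $\isd(G)$, with the observation $\zeta(G) \leq \sigma(G)$ handling the isolated-vertex count. Your two flagged verifications (invariance of the induced hypergraph under $\Z_+ \subset \Z$, and the weaker isolated-vertex requirement) are precisely the content the paper treats as immediate.
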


We now provide a lower bound on $\sd(G)$ for $k$-uniform hypergraphs $G$.
\begin{theorem}\label{theorem: sd hypergraph lower bound}
For $k$-uniform hypergraphs $G$ of order $n$ without any isolated vertices, we have
\begin{align*}
    \sd(G) \geq n + \frac{k(k-1)}{2} - 1.
\end{align*}
\end{theorem}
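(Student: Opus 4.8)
The plan is to mimic the degree-based lower bound for graphs (\cref{lemma: uspum lower bound degrees}), but exploit the fact that in a $k$-uniform hypergraph every edge is incident to $k$ vertices, which forces the labels participating in a single edge to be spread out. Let $L$ be an optimal labeling inducing $G$ along with isolated vertices, so that $\range(L)=\sd(G)$, and let $S=\{a_1<\cdots<a_n\}$ be the labels of the $n$ vertices of $G$. By the same reasoning as in \cref{remark: a1 = min L sd}, we have $\min S=\min L=a_1$: if some label below $a_1$ existed it could be deleted without changing the induced hypergraph on the vertices of $G$, strictly decreasing the range and contradicting minimality.

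The key observation is to bound $\max L$ from below using a single hyperedge. Since $G$ has no isolated vertices, $a_1$ lies in some edge $(v_1,\dots,v_k)$ with $v_1$ the vertex labeled $a_1$. The defining condition of a $k$-sum hypergraph says the sum of the $k$ labels on this edge is itself a label in $L$, hence at most $\max L$. The $k$ labels on this edge are $k$ distinct elements of $S$, the smallest of which is $a_1=\min L$; the remaining $k-1$ are distinct labels each at least $a_1+1,\,a_1+2,\dots$, so their sum is at least $a_1+\bigl(\sum_{i=1}^{k-1}(a_1+i)\bigr)$. This gives
\[ \max L \;\geq\; k\,a_1 + \binom{k}{2}, \]
where I have used $1+2+\cdots+(k-1)=\binom{k}{2}=\tfrac{k(k-1)}{2}$.

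Next I would get a lower bound on the range by accounting separately for the $n$ vertex labels and this hyperedge-sum constraint. The cleanest route is: the $n$ labels $a_1,\dots,a_n$ all lie in $[a_1,\max L]$, so $\max L\geq a_1+n-1$; but this alone only yields $\range(L)\geq n-1$. To bring in the $\binom{k}{2}$ term I instead combine the two facts. From $\max L\geq k a_1+\binom{k}{2}$ and $\range(L)=\max L-a_1$ we get $\range(L)\geq (k-1)a_1+\binom{k}{2}$. Since $a_1\geq 1$ this gives $\range(L)\geq (k-1)+\binom{k}{2}$, which is weaker than claimed for large $n$. To recover the $+n$, I would argue that the edge-sum label $c=\sum$ of the edge on $a_1$ satisfies $c>a_n$ (it exceeds every vertex label of the edge, and in fact can be pushed above all of $S$): more carefully, the label $c$ is an \emph{isolated} vertex or at least distinct from the small vertex labels, and the interval $[a_1,\max L]$ must contain all $n$ vertex labels \emph{together with} enough room for the edge sum to exceed the bulk of $S$. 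The honest accounting is $\max L\geq a_n+(\text{something})$; since $a_n\geq a_1+n-1$ and the edge through $a_n$ forces its sum (at least $a_n$ plus $k-1$ further distinct labels, minimally $a_1+1,\dots,a_1+(k-1)$) to be a label, we get $\max L\geq a_n+(k-1)a_1+\binom{k}{2}-\text{(overlap)}$, and subtracting $a_1$ yields the claimed $\range(L)\geq n-1+\binom{k}{2}$ after simplification.

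\emph{The main obstacle} will be the overlap bookkeeping in this last step: the $k-1$ companion labels on the edge containing $a_n$ (or $a_1$) might coincide with labels already counted among $a_1,\dots,a_n$, so one cannot naively add $n$ and $\binom{k}{2}$. The right way to handle this is to fix the edge $e$ containing $a_1$, let its label-sum be $c$, note $c\geq k a_1+\binom{k}{2}\geq a_1+n-1$ is not automatic, and instead directly lower-bound $\max L$ by observing that $c\in L$ is strictly larger than all $k$ of its constituent labels, in particular larger than any vertex on $e$; combining $c\leq\max L$ with the counting that the $n$ vertices occupy $n$ distinct slots in $[a_1,c]$ while the gap forced by $c\geq ka_1+\binom{k}{2}$ contributes the extra $\binom{k}{2}-1$ beyond the $n$ slots gives $\range(L)=\max L-a_1\geq n-1+\binom{k}{2}$. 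I expect the clean argument to set $a_1=1$ as the extremal case (larger $a_1$ only helps) and then verify the two contributions do not double-count, which is where the care is needed.
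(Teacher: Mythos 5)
The final step of your proposal --- the one you flag as ``where the care is needed'' --- is a genuine gap, and the two routes you offer for closing it both fail as written. Your fallback via the edge through $a_1$ does not work: its sum $c$ satisfies $c \geq ka_1+\binom{k}{2}$ and $c\in L$, but, as you yourself note, $c > a_n$ is \emph{not} automatic (the edge through $a_1$ can sum to a small vertex label, e.g.\ a $3$-edge on labels $1,2,3$ summing to the vertex label $6$ while $a_n$ is much larger), so the ``$n$ distinct slots in $[a_1,c]$'' counting is unjustified; without it, the two facts $\max L \geq ka_1+\binom{k}{2}$ and $\max L \geq a_1+n-1$ combine only as a maximum, not a sum, which is strictly weaker than the claim. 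Your other route, via the edge through $a_n$, is the right one, but you leave it with an undetermined ``$-(\text{overlap})$'' correction, and your lower bound on the companions is also off: the $k-1$ companions of $a_n$ may include $a_1$, so they are minimally $a_1,a_1+1,\dots,a_1+(k-2)$, not $a_1+1,\dots,a_1+(k-1)$.

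Here is how the paper closes it, and why the overlap you worry about is illusory. The companions of $a_n$ are $k-1$ \emph{distinct} elements of $S\setminus\{a_n\}$ (this needs only $n\geq k$, which holds since $G$ has an edge), so their sum is at least $a_1+a_2+\cdots+a_{k-1}$; combining this with $a_i\geq a_1+(i-1)$ and $a_n\geq a_1+(n-1)$ gives
\[
\max L \;\geq\; a_n+a_1+\cdots+a_{k-1}\;\geq\; ka_1+(n-1)+\frac{(k-1)(k-2)}{2},
\]
hence $\range(L)\geq (k-1)a_1+(n-1)+\frac{(k-1)(k-2)}{2}\geq n-1+(k-1)+\frac{(k-1)(k-2)}{2}=n+\frac{k(k-1)}{2}-1$, using $a_1\geq 1$ exactly as you predicted (``larger $a_1$ only helps''). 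There is no double counting to repair: the $n-1$ comes entirely from the single inequality $a_n\geq a_1+n-1$, while the companions are bounded below by the distinct values $a_1,\dots,a_{k-1}$, all different from $a_n$; one is lower-bounding a single number (the edge-sum label) by a sum of lower bounds on its $k$ distinct constituents, never adding a vertex count to a gap count. Note also that the ``missing'' $k-1$ from using $\binom{k-1}{2}$ in place of your $\binom{k}{2}$ is exactly what the term $(k-1)a_1\geq k-1$ supplies.
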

\begin{proof}
Consider an arbitrary labeling $L$ that induces $G$ along with some isolated vertices and achieves $\range(L) = \sd(G)$.
Then let $S\subset L$ be the set of vertex labels of $L$, i.e., the set of labels that correspond to vertices of $G$ in the induced $k$-sum hypergraph.
Sort $S=\{a_1,\dots,a_n\}$ in increasing order $a_1 < \cdots < a_n$.
Notice that $a_1 = \min L$ as any labels smaller than $a_1$ could be removed while still inducing $G$, and this would decrease $\range(L)$, contradicting the assumption that $\range(L) = \sd(G)$ is minimized.
As no vertices of $G$ are isolated, $a_n$ must have an edge incident to it, and the existence of this edge implies an edge label of value at least
\[ a_n + a_1 + \cdots + a_{k-1} \geq ka_1 + \frac{(k-2)(k-1)}{2} + n-1, \]
where this inequality uses the observation that $a_i \geq a_1 + (i-1)$ for all $i$.
Hence,
\[ \range(L) \geq (k-1)a_1 + \frac{(k-2)(k-1)}{2} + n-1 \geq n + \frac{k(k-1)}{2}-1,\]
as desired.
\end{proof}

Similar to \cref{theorem: sd general upper bound}, we provide an upper bound on $\sd(G)$ for $k$-uniform hypergraphs $G$ using the analogous generalization of Sidon sets to $k$-Sidon sets.
However, first we introduce the necessary definitions for $k$-Sidon sets.
We use the notation of O'Bryant \cite{obryant2004complete}, who provides a comprehensive survey of Sidon sets.
\begin{definition}
A set of integers $\mathcal{A}$ is a $B_k$ set if the coefficients of $\left(\sum_{a \in \mathcal{A}} z^a\right)^k$ are bounded by $k!$. 
\end{definition}
Notice that a $B_2$ set is equivalent to a Sidon set: if $a+b \neq c+d$ when $\{a,b\} \neq \{c,d\}$, then each coefficient is at most 2, as we can have $a+b$ and $b+a$ for a given sum.
So a $B_k$ set is a generalization of a Sidon set, i.e., a $k$-Sidon set as it is otherwise referred to.
\begin{definition}
Let $R_k(k!,n)$ denote the largest cardinality of a $B_k$ set contained in $[1,n]$.
Then $\sigma_k(k!)$ is defined by
\begin{align*}
    \sigma_k(k!) = \lim_{n \to \infty} \frac{R_k(k!,n)}{\sqrt[k]{n}}.
\end{align*}
\end{definition}
The construction of Bose and Chowla \cite{bose1962theorems} shows that $\sigma_k \geq 1$, so conversely there exists a $B_k$ set of cardinality $n$ contained in $[1,f(n)]$ where $f(n)=O(n^k)$.

\begin{theorem}\label{theorem: sd hypergraph upper bound}
For $k$-uniform hypergraphs $G$ of order $n$, we have
\begin{align*}
    \sd(G) = O(k^3 n^k).
\end{align*}
\end{theorem}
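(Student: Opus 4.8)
The plan is to generalize the construction in the proof of \cref{theorem: sd general upper bound}, replacing the Sidon set by a $B_k$ set and the modulus $4$ by a modulus slightly larger than $k^2$. Using the stated consequence of the Bose--Chowla construction, I would fix a $B_k$ set $\{s_1,\dots,s_n\}\subseteq[1,f(n)]$ of cardinality $n$, where $f(n)=O(n^k)$, and set $m=k^2+1$. Enumerating the vertices $1$ through $n$, assign to vertex $i$ the \emph{vertex-type} label $m s_i + 1$, and for each edge $\{i_1,\dots,i_k\}\in E$ add the \emph{edge-type} label $m(s_{i_1}+\cdots+s_{i_k})+k$. Let $L$ be the resulting set; every vertex-type label is $\equiv 1 \pmod m$ and every edge-type label is $\equiv k \pmod m$. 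The cases $n<k$ (no edges) and $n=1$ are trivial, using only vertex-type labels.

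The construction has two independent safeguards. First, a modular separation argument controls which $k$-element sums can land in $L$: a sum of $k$ labels consisting of $j$ edge-type labels and $k-j$ vertex-type labels is congruent to $k + j(k-1) \pmod m$. As these residues for $0 \le j \le k$ all lie in $[k,k^2]\subseteq[1,m-1]$, no wraparound occurs, so such a sum can be $\equiv 1$ or $\equiv k \pmod m$ (the only residues of labels) only when $j=0$; hence any candidate edge lies among $k$-subsets of vertex-type labels. Second, the $B_k$ property resolves these candidates exactly: the sum of the $k$ vertex labels indexed by a set $I$ equals $m\sum_{i\in I}s_i + k$, and since a $k$-subset of distinct $s_i$ already accounts for $k!$ ordered representations of its sum, the defining coefficient bound $k!$ forbids a second representation, so $I\mapsto\sum_{i\in I}s_i$ is injective on $k$-subsets. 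Therefore such a sum lies in $L$ if and only if it equals the edge-type label of an actual edge $I\in E$. Together these show $L$ induces $G$ along with exactly $|E|$ isolated vertices: each edge-type label $c$ is incident to no edge, since any $k$-subset containing $c$ has $j\ge 1$ and thus sum $\equiv k + j(k-1)\not\equiv 1,k\pmod m$, so $c\notin$ any edge. Because this exhibits $G$ as a $k$-sum hypergraph after adding $|E|$ isolated vertices, it reproves $\sigma(G)\le|E|$, so the labeling uses at least $\sigma(G)$ additional isolated vertices, as \cref{definition: sum-diameter hypergraph} requires.

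Finally I would bound the range. The largest label is an edge-type label, at most $m\cdot k\cdot s_n + k = (k^2+1)\,k\,f(n)+k$, while $\min L = m s_1 + 1 \ge m+1>0$, so $\sd(G)\le \range(L) = O(k^3 f(n)) = O(k^3 n^k)$, where the factor $k^3$ is exactly the modulus $\Theta(k^2)$ times the $k$ summands. I expect the main obstacle to be the bookkeeping in the modular separation step: one must verify simultaneously that no mixed sum involving at least one edge-type label is ever a label and that distinct edges receive distinct edge-type labels, so that the induced hypergraph is precisely $G$ with neither spurious nor missing edges. Choosing $m>k^2$ is what makes the first of these clean (no wraparound), and is the source of the $k^3$ rather than a smaller power; a more careful choice of $m$ coprime to $k-1$ and of size $\Theta(k)$ would in fact sharpen the bound to $O(k^2 n^k)$.
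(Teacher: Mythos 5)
Your proposal is correct and takes essentially the same approach as the paper: both label vertices by a Bose--Chowla $B_k$ set scaled by $\Theta(k^2)$ with offset $1$, give each edge a label with offset $k$, use the residues modulo the scaling factor to rule out any candidate edge involving an edge-type label, and invoke the $k!$ coefficient bound to match the remaining candidates exactly with $E$, yielding $\sd(G)=O(k^3n^k)$. The only difference is your modulus $k^2+1$ versus the paper's $k^2$ (which tolerates a harmless wraparound at $j=k$); your explicit residue bookkeeping and the remark that a $\Theta(k)$ modulus would sharpen the bound to $O(k^2n^k)$ are fine refinements but not a different argument.
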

\begin{proof}
We provide a general construction for a set of labels $L$ that induces $G=(V,E)$ along with $|E|$ isolated vertices in a similar manner as \cref{theorem: sd general upper bound}.
We use the construction of a $B_k$ set of cardinality $n$ contained in $[1,O(n^k)]$, and enumerate the elements of this set $s_1$ through $s_n$ in increasing order.
Clearly $S=\{k^2s_i + 1 \mid 1 \leq i \leq n\}$ is also a $B_k$ set, so we label the vertices of $G$ using $S$.
For each edge $(v_{i_1},\dots,v_{i_k})\in E$, where each $v_{i_j}$ is labeled with $k^2 s_{i_j} + 1$, add the label $k^2 (s_{i_1} + \cdots + s_{i_k}) + k$ to $L$.
We claim $L$, which has size $|L|=|V|+|E|$, induces $G$ along with $|E|$ isolated vertices.

The reasoning is essentially the same as \cref{theorem: sd general upper bound}.
Our vertex labels are all congruent to 1 mod $k^2$, and the edge labels are all congruent to $k$ mod $k^2$.
By simply considering their sums modulo $k^2$, the only edges that $L$ can induce are between $k$ vertex labels, and thus it suffices to show that $L$ induces an edge between $k$ distinct vertex labels if and only if these $k$ vertices form an edge in $G$.
By construction, if $k$ vertices form an edge in $G$, then these $k$ vertex labels form an edge in the induced $k$-sum hypergraph.
For the other direction, suppose $L$ induces an edge between $k$ distinct vertex labels.
The sum of these $k$ vertex labels is thus an edge label, and was constructed for some edge $e \in E$.
Noting that $e$ is a set of $k$ elements, and as all $k!$ orderings of these $k$ elements yield a sum that equals this edge label, by the definition of a $B_k$ set we know no other sums can equal the value of this edge label, so $e$ must correspond to our $k$ vertex labels.
Essentially, the $B_k$ set condition ensures that no two sums of $k$ distinct vertex labels are the same, so adding the necessary edge label for a given edge does not induce other edges.

Thus $L$ correctly induces $G$ along with $|E|$ isolated vertices.
We observe that $\max L = O(k^3 n^k)$ as each edge label is the sum of $k$ vertex labels, each of which are at most $O(k^2 n^k)$.
Hence, $\sd(G) \leq \range(L) \leq \max L = O(k^3 n^k)$.
\end{proof}

\section{Conclusion and open questions}\label{section: conclusion}
There are many avenues for further research regarding sum-diameter.
One could determine or bound the sum-diameter or integral sum-diameter for other special families of graphs.
In particular, the sum-diameter for path graphs has been bounded to be either $2n-3$ and $2n-2$, and supported by computer data we conjecture it is the latter for all $n \geq 7$, as stated in \cref{conjecture: sd P_n 2n-2}.
The spum of path graphs is similarly bounded within a constant number of options, and a computer search supports our conjecture, \cref{conjecture: spum P_n}, that the current upper bound is tight for $\spum(P_n)$.

In \cref{question: monotonicity wrt induced subgraph}, we also leave as an open question whether the sum-diameter is monotonic with respect to taking induced subgraphs.

While we naturally introduced the integral sum-diameter in addition to the sum-diameter, many of our bounding techniques for sum-diameter cannot immediately generalize to integral sum-diameter.
Thus, the basic properties of the integral sum-diameter are still open to be studied.

Lastly, we generalized and bounded the sum-diameter of $k$-uniform hypergraphs.
The generalization of a Sidon set, or a $B_2$ set, to a $B_k$ set allows us to upper bound the sum-diameter, but requires the $k$-uniform assumption.
It is thus an open question as to how the study of sum-diameter can be extended to hypergraphs that are not $k$-uniform.

\section*{Acknowledgements}
We sincerely thank Amanda Burcroff for her input throughout the research process, especially for her insights regarding Sidon sets.
We would also like to thank Milan Haiman and Maya Sankar for helpful ideas regarding construction techniques for the disjoint union and graph join bounds.
We also thank Ashwin Sah and Michael Ren for helpful input.
We thank Mitchell Lee for his editing feedback.
We would like to thank Prof.\ Joe Gallian for his editing feedback and operation of the Duluth REU program.
This research was conducted at the University of Minnesota Duluth Mathematics REU and was supported, in part, by NSF-DMS Grant 1949884 and NSA Grant H98230-20-1-0009.
Additional support was provided by the CYAN Mathematics Undergraduate Activities Fund.

\bibliographystyle{plain}
\bibliography{ref_spum}

\end{document}